\newtheorem{Theorem}{Theorem}[section] 
\newtheorem{Lemma}{Lemma}[section]
\newtheorem{Corollary}{Corollary}[section]
\newtheorem{Proposition}{Proposition}[section]
\newtheorem{Remark}{Remark}[section]
\newtheorem{Conjecture}{Conjecture}[section] 
\numberwithin{equation}{section}
\definecolor{azure}{rgb}{0.94, 1.0, 1.0}
\definecolor{Gray}{gray}{0.90}
\newcommand{\utilde}[1]{\underaccent{\tilde}{#1}}
\newcommand{\R}{{\mathbb R}}
\newcommand{\N}{{\mathbb N}}
\newcommand{\EE}{{\mathbb E}}
\newcommand{\E}{{\mathbb{E}}}
\newcommand{\sT}{{\mathsf T}}
\newcommand{\sX}{{\mathsf X}}
\newcommand{\sY}{{\mathsf Y}}
\newcommand{\usX}{{\underline{\mathsf X}}}
\renewcommand{\ldots}{ ... }
\newcommand{\um}{\underline m}
\newcommand{\utpi}{\utilde \pi}
\newcommand{\otpi}{\tilde \pi}
\newcommand{\oD}{L}
\newcommand{\upi}{\underline{\pi}}
\newcommand{\sfrac}[2]{{\textstyle\frac{#1}{#2}}}
\definecolor{LightCyan}{rgb}{0.88,1,1}
\newcommand{\nn}{\nonumber}
\newcommand{\one}{{\mathbf 1}\hskip-.5mm}
\DeclareMathOperator*\argmin{arg \, min}
\DeclareMathOperator{\var}{Var}
\DeclareMathOperator{\cov}{Cov}
\newcommand{\Dopt}{D^{\rm opt}}
\newcommand{\alogq}{|\log(q)|}
\def\quotient#1#2{%
    \raise1ex\hbox{$#1$}/\lower1ex\hbox{$#2$}%
}
\title{Group testing with nested pools}
\author{In\'es Armend\'ariz\thanks{Universidad de Buenos Aires \& IMAS-CONICET-UBA. Email: {\tt iarmend@dm.uba.ar}},\;
    Pablo A. Ferrari\thanks{Universidad de Buenos Aires \& IMAS-CONICET-UBA. Email: {\tt pferrari@dm.uba.ar}},
    Daniel Fraiman\thanks{Universidad de San Andr\'es \& CONICET. Email: {\tt dfraiman@udesa.edu.ar}},\\
    Jos\'e Mario Mart\'{\i}nez\thanks{Universidade de Campinas. Email: {\tt martinez@ime.unicamp.br}},\;
    Silvina Ponce Dawson\thanks{Universidad de Buenos Aires \& IFIBA-CONICET-UBA Email: {\tt silvina@df.uba.ar}}}
\begin{document}
{\tiny
  \maketitle
}

\begin{abstract}
In order to identify the infected individuals of a population, their samples are divided in equally sized groups called pools and a single laboratory test is applied to each pool. Individuals whose samples belong to pools that test negative are declared healthy, while each pool that tests positive is divided into smaller, equally sized pools which are tested in the next stage.  In  the $(k+1)$-th stage all remaining samples are tested. 
If $p<1-3^{-1/3}$, we
minimize the expected 
number of tests per individual as a function of the number $k+1$ of stages, and of the pool sizes in the first $k$ stages.
We show that for each $p\in (0, 1-3^{-1/3})$ the optimal choice is one of four possible schemes, which are explicitly described. We conjecture that for each $p$, the optimal choice is one of the two sequences of pool sizes $(3^k\text{ or }3^{k-1}4,3^{k-1},\dots,3^2,3 )$, with a precise description of the range of $p$'s where each is optimal. The conjecture is supported by overwhelming numerical evidence for $p>2^{-51}$.
We also show that the cost of the best among the schemes $(3^k,\dots,3)$  is of order $O\big(p\log(1/p)\big)$, comparable to the information theoretical lower bound $p\log_2(1/p)+(1-p)\log_2(1/(1-p))$, the entropy of a Bernoulli$(p)$ random variable.

\vspace{3mm}
\noindent{\em Keywords } Dorfman's retesting, Group testing, Nested pool testing, Adaptive pool testing.\\[1mm]
{\em AMS Math Classification } Primary 62P10

\end{abstract}

  \section{Introduction} 
  \label{pblm}
The purpose of group testing is to identify the set of infected individuals in a large population in the most efficient way.
Dorfman \cite{dorfman} was the first to propose a group testing strategy in 1943. The samples from $n$ individuals are pooled and tested together. If the test is negative, the $n$  individuals of the pool are cleared. If the test is positive, each of the $n$ individuals must be tested separately, and $n+1$ tests are required  to test $n$ people. In the present note we study a sequential multi-stage extension of Dorfman's algorithm, considered by Kotz and Johnson, see \cite{MR673451,MR1214790}. This scheme belongs to the family of \emph{adaptive models}, in which the course of action chosen at each stage depends on the results of previous stages.

\paragraph{\sl Model and results} Consider a population size $N$ and a collection of independent random variables $(\sX_i)_{i=1}^N$ with common Bernoulli distribution with a known parameter $p$. Each variable represents the infection status of an individual in the population, and $p$ is the prevalence. The goal is to reveal the values of a realization of the variables by evaluating a series of suitably chosen functions of groups of the variables (tests), while minimizing the average number of tests per variable. 
We study the following adaptive scheme.
Let $k\in \N$ and consider a sequence of integers $m_1>\dots>m_k>1$, where $m_j $ is a multiple of $m_{j+1}$ for all $j$.
In the first stage the variables are pooled into groups (pools) of size $m_1$, and a single test is performed per group.
The test has two possible outcomes, positive of negative. A positive result indicates that at least one of the variables in the pool takes the value $1$. Variables in pools that test positive are split into sub-pools of size $m_2$, which are tested in the second stage. Continue in this way until the $k$-th stage, when the yet undetermined variables are grouped into pools of size $m_k$. In the $(k+1)$-th stage, evaluate all variables belonging to pools that tested positive in the $k$-th stage. This procedure was previously considered by Kotz and Johnson \cite{MR1214790}, see also \cite{MR673451}. 
The scheme is \emph{nested} because pools in each stage are subsets of the pools in the previous stage.
For each infection probability $p$ and strategy $(k,\um)$ with $\um=(m_1,\dots,m_k)$, let 
\begin{align*}
\sT_k(\um)&:= \text{total number of tests per initial pool (of size $m_1$)},\\
D_k(\um,p)&:= \frac{1}{m_1} \E \sT_k(\um)=\text{cost of the strategy when the prevalence is $p$,}
\end{align*}
where $\E$ denotes mathematical expectation.
In Section \ref{dependents} we describe $\sT_k(\um,p)$ as a function of $(\sX_1,\dots,\sX_{m_1})$, and compute its expectation and variance. Since the variables are iid, the cost associated to every pool in the first stage is the same, and $D_k(\um,p)$ computes the cost of the scheme if the population size $N$ is a multiple of $m_1$. If this is not the case, there will be a remainder pool of size less than $m_1$. Variables in this group are individually tested, leading to at most $m_1$ extra tests, which produces an error in the average number of tests per variable that can be bounded by $m_1/N$. Since $m_1$ is independent of $N$, and this is typically large, in the sequel we will disregard this error, assuming that either $N=\infty$ or $N$ is a multiple of $m_1$, see also the Remark after Proposition \ref{stk1}.

The optimal strategy associated to $p$ is the choice $(k,\um)$ that minimizes $D_k(\um,p)$. We solve this optimization problem in Section \ref{optimization}. If $p\ge 1-3^{-1/3}$, the best strategy is to test each individual sample. For $0<p<1-3^{-1/3}$, we prove in Proposition~\ref{p12} that the optimal strategy is $(k,\um)= \bigl(1,(3)\bigr)$ or $\bigl(1,(4)\bigr)$, or $k\ge2$ and $(k,\um)$ being one of the following four types
\begin{align}
k&=k_{34}(p),&\um_{34}(p)&:=( 3^{k-1}4, \,3^{k-1},\dots,\,3^2, 3),\label{i1}\\[1.5mm]
 k&=k_3(p),&\um_{33}(p)&:=(3^k,3^{k-1},\dots,\,3^2,\,3),\label{i2}\\[1.5mm]
 k&=k_{23}(p),&\um_{23}(p)&:=(3^{k-1}2,\,3^{k-2}2,\dots,\, \,3^2 2,\,3{\textstyle \times} 2,\,2), \label{i3}\\[1.5mm]
 k&=k_{24}(p),&\um_{24}(p)&:=(4\textstyle{\times}3^{k-2}\textstyle{\times} 2,\, 3^{k-2}2,\dots,\,3\times 2,\,2), \label{i4}
\end{align}
and give precise formulas for the optimal number $k$  for each of these cases in Proposition~\ref{p13}. In words, the size of the $k$-th pool must be $2$ or $3$, each intermediate 
pool must have size equal to $3$ times the size of the pool in the following stage, and the size of the first pool is $3$ or $4$ times the size of the second pool. For $1-3^{-1/3}>p>2^{-51}$, the strategies  $\um_{24},\um_{23}$ were  numerically compared  with $\um_{34},\um_{33}$ using multiple precision computations and careful floating point analysis, and discarded for $p$ in that range. Conjecture~\ref{c8} proposes that the optimal strategy is either $\um_{33}$ or $\um_{34}$; in this case we identify the minimizing strategy for each $p$.

In Theorem \ref{thm} we show that the cost of the optimal strategy associated to $p$ is at most $\frac{3}{\log 3} \,p\log(1/p)$, and, moreover,  it differs from the cost of the strategy $(3^{k_3(p)},\dots,3)$, $k_3(p)$ chosen as in~\eqref{i2}, by a fraction of this quantity. As a first conclusion, this proves that the cost of the optimal strategy is of the same order as the information theoretical lower bound \cite{zbMATH03222508} and the cost of the best known algorithms \cite{MR3569134,sobel1959group}, although the constants are slightly worse, $3/\log 3$ in contrast to~$1/\log 2$. A second upshot of Theorem~\ref{thm} is that the testing scheme that consists in applying the strategy $\big(k_3(p),(3^{k_3(p)},\dots, 3)\big)$ has optimal, or near optimal cost, within the family of strategies studied in this paper, for all $p \in (0,1-3^{-1/3})$. 

The optimal nested scheme proposed here requires a deterministic number of stages of order $\log_3(1/p)$ per initial pool of size $m_1$, see Proposition~\ref{p13}.

\paragraph{\sl Background}
Dorfman's $2$-stage strategy was subsequently improved
by Sterrett~\cite{sterrett} to further reduce the number of tests, and extended
to more stages of group testing by  Li~\cite{Li} and Finucan~\cite{finucan}.
Sobel and Groll~\cite{sobel} introduced the idea of recursively splitting  the initial pool. Building on this idea, the procedure of \emph{binary splitting} halves the 
size of a positive testing pool, at each subsequent stage, requiring  $\log_2(m)$ tests (one per stage) to find one infected individual in an initial pool of size $m$. Improvements on this method 
yield procedures that identify all infected samples in the initial pool at a cost of $p\log_2(1/p)$ tests per individual, see Hwang~\cite{hwang}, Allemann~\cite{Allemann} and Aldridge~\cite{aldridge}. In these methods the number of stages required to complete the exploration of the initial pool is at least linear in the random number of infected individuals. This might be a concern if each stage is lengthy.

An alternative approach is given by \emph{nonadaptive testing}, where the composition of the pools is designed in advance, and usually many tests are conducted in parallel. In the situation considered here, where the number of infected samples grows linearly in the size of the population, and the goal is to identify all infected individuals (zero error), the best nonadaptive testing strategy is to simply test each individual. On the other hand, there are intermediate algorithms combining adaptive pooling with a limited number of stages and parallel testing, that perform very well in the limit when $p\to 0$, see M\'ezard and Toninelli~\cite{mezard-toninelli}  and Damaschke and Muhammad~\cite{damas}. We  refer the reader to the recent book by Aldridge, Johnson and Scarlett \cite{review-johnson} for a thorough discussion of different group testing techniques.

Our results assume that the infection probability $p$ is known and common to all individuals, and that each test is conclusive, i.e. there are no false positives/negatives. When this is not the case, 
Sobel and Groll~\cite{sobel} extend the approach to include the estimation of  $p$, and consider the situation where there are
subpopulations
characterized by different infection probabilities. 
References ~\cite{MTB,bilder-tebbs,bilder2015} propose to use information on heterogeneous populations
to improve Dorfman's algorithm. An
extension of Dorfman's algorithm in which each group is tested several
times to minimize testing errors was presented
in~\cite{doi:10.1080/00401706.1972.10488888}. 
The impact of test sensitivity in both
adaptive/nonadaptive testing was analyzed in~\cite{kim_2007}. We also assume that the infection status of the individuals in the population are independent random variables, and do not vary during the time required to run the testing procedure. A recent work \cite{srinivasavaradhan2021dynamic} considers the case when the infection spreads during the testing period. Other works consider the situation when samples are correlated due to the presence of communities in the population \cite{ahn2021adaptive,goenka2020contact,nikolopoulos2021group,nikolopoulos2021community,bertolotti2020network,gabrys2021acdc}.

This paper was triggered by discussions with a group of biologists from Santa Fe in Argentina who intended to use massive testing as one of the tools to control the Covid-19 pandemic in the country. 
The possibility of running
the current gold standard test, RT-qPCR, in sample pools was investigated in~\cite{Yelin2020.03.26.20039438}, finding that the
identification of individuals infected with SARS-CoV-2 is in fact possible using mixtures of up to 32 individual samples.  The use of more sensitive tests~\cite{Suo2020.02.29.20029439,Dong2020.03.14.20036129} would likely improve this limit.
In~\cite{hanel2020boosting}, Dorfman's algorithm is applied with replicates that check for false negatives or
positives. In~\cite{Mentus2020.04.05.20050245}, adaptive and
non-adaptive methods that use binary splitting are
compared numerically.
The work in~\cite{Sinnott-Armstrong2020.03.27.20043968} evaluates
numerically the performance of two-dimensional array pooling comparing it
with Dorfman's strategy.

\paragraph{\sl Complementary work} The authors, in collaboration with Hugo Menzella, have done some complementary work targeting  applications. The paper \cite{affmmpd-medrxiv} discusses prevalence estimation, group testing under the presence of errors, and technical verification use of sensitive ddPCR Covid tests. The webpage \cite{pooling-webpage} provides an interface to choose the best testing strategy adapted to various possible restrictions such as prevalence, maximum initial pool size, or maximum possible number of stages, and provides information on the expected cost and processing time of different strategies; it is work in collaboration with Francisco Sobral.  

\paragraph{\sl Organization}

The article is organized as follows. In Section \ref{dependents} we define the nested procedure and compute the expectation and variance of the random number of tests per individual. 
We find the optimal strategy in Section \ref{optimization}. In Section \ref{optim} we study a linearization of the cost. We apply some of the conclusions in Section \ref{feas} to show that the strategy $m_{33}$ is optimal or close to optimal for all $p$. We include an appendix with technical computations.

\section{Nested strategies}
\label{dependents}

In this section we define multi-stage nested strategies and obtain a formula for the expected number of tests per individual, which we will call the \emph{cost} of the strategy.

Let $p$ be the probability that an individual is infected. The cost of Dorfman's \cite{dorfman} 2-stage strategy, with groups of $m$ individuals in the first stage and individual testing in the second stage, is given by
\begin{align}
\label{meandorf}
\frac{1}{m}\bigl(1+ m(1-(1-p)^m)\bigr).
\end{align}
The term 1 in the parenthesis is the number of tests in the first stage: one per pool. The second term is the expected number of tests in the second stage: 0 times the probability $(1-p)^m$ that the test of the first stage is negative, plus $m$ (additional tests) times $1-(1-p)^m$, the probability that the test in the first stage is positive. 
Since the cost of testing all individuals is $1$,  Dorfman's strategy is worth pursuing only if the cost \eqref{meandorf} is less than 1, solving for $p$ we get 
\begin{align}
\label{pdorf}
p<\max_{m=2,3,\dots}\bigl( 1-m^{-1/m}\bigr) =  1-3^{-1/3 }\approx 0.3066,
\end{align}
and in this case $n$ must be greater than or equal to $3$. Ungar \cite{ungar1960cutoff} showed that for $p> (3-\sqrt 5)/2= 0.3819\dots$, individual testing is the best strategy, and recently Papanicolaou \cite{papanicolaou2020binary} provided a strategy that improves individual testing for $p$ below $(3-\sqrt 5)/2$.

We iterate Dorfman's strategy using nested pools, that is, pools in each stage are obtained as a partition of the positive-testing pools in the previous stage.

Let $\usX=(\sX_1,\dots,\sX_N)$ be a family of independent, identically distributed random variables $\sX_i\sim$ Bernoulli$(p)$, where $\sX_i=1$ means that the individual $i$ is infected. For any subset $A\subset\{1,\dots,N\}$, the function
  \begin{align}
     \phi_A(\usX) := \prod_{i\in A} (1-\sX_i).
  \end{align}
is called a \emph{test}.  The test takes the value $1$ if $A$ has no infected individuals. Otherwise it vanishes and in this case we say that the pool $A$ is infected.
The goal is to reveal the values of $\sX_i$ for all $i$ as the result of a family of tests.

As an example, we describe now the computation of the cost for the $3$-stage procedure, the general case is summarized in \eqref{Tdepk} and \eqref{meankdep}. 
Denote by $m_1,m_2\ge 1$ the sizes of the pools in the first and second stages, respectively, where $m_1$ is a multiple of $m_2$. Let
 $A_1$ be a pool in the first stage, and partition $A_1$ into $\frac{m_1}{m_2}$ subsets
$A_{1,1},\dots, A_{1, \frac {m_1} {m_2}}$ of size $m_2$ each in the second stage. 
Let 
\begin{align}
\label{phi-again}
\sY_1:=\phi_{A_1}(\usX)\quad\text{and}\quad \sY_{1,i}:=\phi_{A_{1,i}}(\usX),\qquad 1\le i\le \frac{m_1}{m_2}. 
\end{align}
The first stage is to evaluate $\sY_1$. If $\sY_1=1$, then there are no infected individuals in $A_1$. 
If $\sY_1=0$ then go to the second stage and evaluate $\sY_{1,i},  1\le i\le \frac{m_1}{m_2}$. 
Finally, in the third stage, apply the test to each individual belonging to an infected pool of the second stage.
Denote by $T_2$ the total number of tests under this $3$-stage scheme. We get
\begin{align}
\label{Tdep}
\sT_2=1+(1-\sY_1)\frac{m_1}{m_2}+(1-\sY_1)\,m_2\sum_{i=1}^{\frac{m_1}{m_2}}(1-\sY_{1,i}).
\end{align}
The first term $1$ is the initial test of $A_1$. The second term counts the tests of $A_{1,i}$, $i=1,\dots,\frac{m_1}{m_2}$. These tests are performed only when $(1-\sY_1)=1$. The third term counts the individual tests: $m_2$ for each $A_{1,i}$ with  $(1-\sY_1)(1-\sY_{1,i})=1$.
Since $\sY_1=1$ implies $\sY_{1,i}=1$ for all $i$, we have
\begin{align}
\label{cancel}
(1-\sY_1)(1-\sY_{1,i})= (1-\sY_{1,i}),
\end{align}
which in turn implies
\begin{align}
\label{Tdep2}
\sT_2=1+(1-\sY_1)\frac{m_1}{m_2}\,+\,m_2\sum_{i=1}^{\frac{m_1}{m_2}}(1-\sY_{1,i}).
\end{align}
It is useful to denote
\begin{align}
  \label{defq}
  q:=1-p.
\end{align}
Since $1-\sY_1$ is Bernoulli$(1-q^{m_1})$ and $(1-\sY_{1,i}:{1\le i\le \frac{m_1}{m_2}})$ are independent, identically distributed  Bernoulli$\big(1-q^{m_2}\big)$ random variables, we get that the \emph{cost} of the strategy $(k,\um)=\bigl(2, (m_1,m_2)\bigr)$ is
\begin{align}
\label{mean3dep}
D_2(\um,p) &:=\frac1{m_1}\, \EE\sT_2 =\frac1{m_1} + \frac1m_2 \big(1-q^{m_1}\big)+1-q^{m_2},
\end{align}
where we used \eqref{Tdep2} and $\EE$ denotes mathematical expectation.

Denote the set of nested strategies with $k$ pooled stages and the set of nested strategies by
\begin{align}
\label{cN}
{\cal M}_k&:=\big\{\big(k,(m_1,\dots, m_k)\bigr): m_j>m_{j+1},\; m_j \text{ is a multiple of } m_{j+1},\; j=1,\dots,k-1\big\}
\\[1mm]
{\cal M}&:=\cup_{k\ge 0}{\cal M}_k.\label{cN1}
\end{align}

Given a nested strategy $(k,\um)$, label the pools by a rooted tree with vertices $\{1\}\cup\bigl\{(1,i_2,\dots, i_j): 1\le i_j\le m_{j-1}/m_j, 2\le j\le k\bigr\}$ and edges $[(1),(1,i_2)]$,  $[(1,i_2),(1,i_2,i_3)],\dots$ connecting a pool of stage $j-1$ with all its sub-pools of stage $j$, see Fig.~\ref{fig:tree-27}.  With this notation we have
\begin{align}
  \bigcup_{i_j=1}^{m_{j-1}/m_{j}}A_{1,i_2,\dots,i_j} = A_{1,i_2,\dots,i_{j-1}},\quad j=2,\dots, k.
\end{align}
\noindent
\begin{minipage}{\linewidth}
\begin{center}	
	 \includegraphics[width=.7 \textwidth, trim={0 0mm 0 15mm},clip]
             {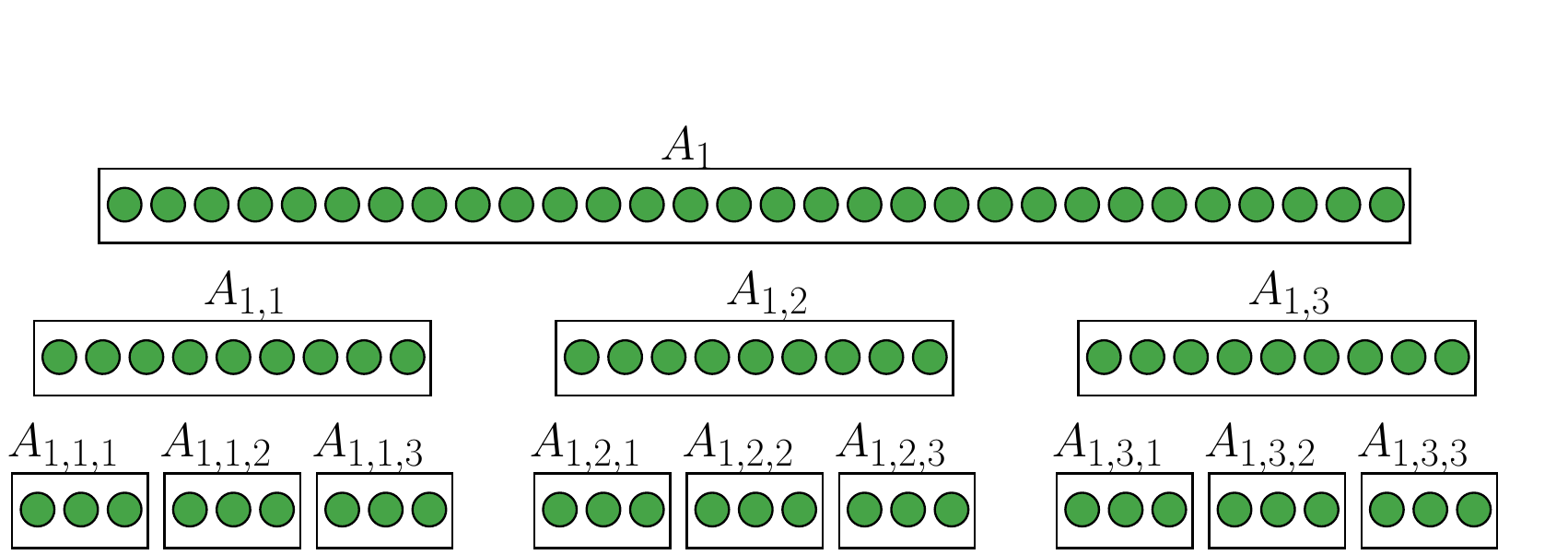}
	\captionof{figure} {Labeled pools for the strategy $k=3$ and $\um=(27,9,3)$. \label{fig:tree-27}	 }
      \end{center}
    \end{minipage}\\
    
 The following proposition was obtained in \cite{MR673451}, see also \cite{MR1214790}. We include the proof for completeness.
 \begin{Proposition}
   \label{stk1}
      Denote by $\sT_{k}$ the number of tests  needed to evaluate all individuals in $A_1$ with the strategy $(k,\um)$. Then,
\begin{align}
\label{Tdepk}
  \sT_{k}&=1+\frac{m_1}{m_2}(1-\sY_1)
         +\frac{m_2}{m_3} \sum_{i_2=1}^{\frac{m_1}{m_2}}(1-\sY_{1,i_2})
         +\frac{m_3}{m_4}\sum_{i_2=1}^{\frac{m_1}{m_2}}\,\, \sum_{i_3=1}^{\frac{m_2}{m_3}}(1-\sY_{1,i_2, i_3})
         \notag \\
       &\hspace{1.2cm}
        +\dots +m_k\sum_{i_2=1}^{\frac{m_1}{m_2}}\,\, \sum_{i_3=1}^{\frac{m_2}{m_3}} \dots \sum_{i_{k}=1}^{\frac{m_{k-1}}{m_k}}
         \bigl(1-\sY_{1,i_2, \dots, i_{k}}\bigr).
\end{align}
Furthermore, the cost of the strategy $(k,\um)$ is
\begin{align}
\label{meankdep}
D_{k}(\um,p)=\frac1{m_1} \EE\sT_{k}
&=\frac1{m_1} + 1-q^{m_k}+ \sum_{j=2}^{k} \frac1m_j \bigl(1-q^{m_{j-1}}\bigr).
\end{align}
\end{Proposition}
\begin{proof}
For any choice of $i_2,\dots,i_j$, with $1\le i_\ell\le m_{\ell-1}/m_\ell,\, 2\le \ell\le j$, we have
 \begin{align*}
\one_{\{\text{the pool } A_{1,i_2,\dots,i_j} \text{ tests positive}\}}=\big(1-Y_1\big) \big(1-Y_{1,i_2}\big)\dots \big(1-Y_{1,i_2,\dots,i_\ell}\big) =1-Y_{1,i_2,\dots,i_j},
 \end{align*}
 where the second identity follows by an argument similar to the one applied in \eqref{cancel}.
Once the pool $A_{1,i_2, \dots, i_{j}}$ is tested and it tests positive, its $\frac{m_j}{m_{j+1}}$ sub-pools must be evaluated. These evaluations are accounted for by the term
\begin{align*}
\frac{m_j}{m_{j+1}} \big(1-Y_{1,i_2,\dots,i_j}\big).
\end{align*}
The sums in \eqref{Tdepk} collect the contribution of all pools that tested positive to the final number of tests. 

Since $(1-\sY_{1, i_2, \dots, i_{j}})$ are i.i.d.~Bernoulli$\bigl(1-q^{m_j}\bigr)$, \eqref{meankdep} follows from \eqref{Tdepk}.
\end{proof}

\paragraph{\sl Remark}
     Strictly speaking $N$ should be a multiple of $m_1$ but in practical terms this 
  requirement is almost irrelevant. Suppose, for example, that the population $N$ is 
  $10^6$ and the prevalence $p$ is $0.001$. In this case, the optimization procedure 
   yields $m_1 = 729$ with a unitary cost equal to $0.018$. 
Therefore, $N$ is not a multiple of $m_1$. The greatest multiple of 
  $729$ that is smaller than $N$ is $N_{max} =999459$. So, the application of our strategy to 
   the set of $N_{max}$ individuals has a cost equal to $999459 \times 0.018 \approx 17990$. However, 
  in this way  $541$ individuals would remain untested. The solution to this 
  inconvenient is to include these $541$ individuals in an additional pool of $729$ individuals, 
of which $541$ are real ones and the remaining $188$ are ``slack'' individuals.   
   Thus, the total cost will be increased from 
  $0.018 \times 999459$  to $0.018 \times 1000188 \approx 18003$, which represents a negligible increase
  with respect to $17990$. 
  
\paragraph{\sl Remark} Denote $\sT_k^\ell$ the number of tests performed in the $\ell$-th stage. By the argument of Proposition \ref{stk1},
\begin{align}
\label{tkj}
\sT_k^\ell=\frac{m_{\ell-1}}{m_\ell}
\sum_{i_2,\dots,i_{\ell-1}}\bigl(1-\sY_{1, i_2, \dots, i_{\ell-1}}\bigr),
\end{align}
and its expectation does not depend on $k$: 
\begin{align}
\label{meantkj}
\EE\sT_k^\ell=\frac{m_1}{m_\ell}\big(1-q^{m_{\ell-1}}\big).
\end{align}

\paragraph{Variance}
The variance of the number of tests to be performed is useful to understand how much variability in the cost can be expected when the pooling procedure is carried out. This issue has practical importance for planning.
The variance of $\sT_k$ can be explicitly computed. We write down here the case $k=2$; the proof is given in the Appendix \ref{variance},
\begin{align}
\label{Var2}
\var\bigl( \sT_2\bigr)&=\frac{m_1^2}{m^2_2}\, q^{m_1}\bigl(1-q^{m_1}\bigr)+ m_2m_1\, q^{m_2}\bigl(1-q^{m_2}\bigr)+2\,\frac{m_1^2}{m_2}\,q^{m_1}\bigl(1-q^{m_2}\bigr).
\end{align}
An important case is when the ratio between consecutive pool sizes is constant and given by the last pool size.
Assuming $\frac{m_j}{m_{j+1}}=\mu=m_k$, $j\le 1\le k$, so that $m_j=\mu^{k-j+1}$, the variance is given by
\begin{align}
\var\bigl(\sT_{k}\bigr)&=\mu^2\bigg\{\sum_{i=1}^{k} \mu^{i-1}\bigl(1-q^{m_i}\bigr)\Big[ q^{m_i}+2\sum_{j=1}^{i-1}q^{m_j}\Big]\bigg\} \label{vark1}\\
&=\mu^2\Bigl\{\sum_{i=1}^{k} \mu^{i-1}\Bigl(1-q^{\mu^{k-i+1}}\Bigr)\Bigl[ q^{\mu^{k-i+1}}+2\sum_{j=1}^{i-1}q^{\mu^{k-j+1}}\Bigr]\Bigr\}. \label{vark2}
\end{align} 
We prove \eqref{vark1} and \eqref{vark2} in Appendix~\ref{variance}. Computations are similar for the general case, without assumptions on the sequence of pool sizes.

\section{Optimization}
\label{optimization}

Recall that ${\cal M}$ is the set of nested strategies defined in  \eqref{cN1}, and let
 \begin{align}
   \label{optn}
 \Dopt(p) := \underset{(k,\um) \in {\cal M}}{\inf}\, D_k(\um,p).
 \end{align}
We will say that $(k,\um)$ is \emph{optimal for $p$} if it is a minimizer of \eqref{optn}.

Denote
\begin{align}
  (1,\um_{33}) &:= (1,(3)); \qquad  &  (k,\um_{33}) &:= (k,(3^{k},\dots, 3)), \;\;k\ge2   \label{km23} \\
       (1,\um_{34}) &:= (1,(4));\qquad &   (k,\um_{34}) &:= (k,(4\textstyle{\times}3^{k-1},3^{k-1},\dots, 3)), \;\;k\ge2 \label{km24}\\
  (1,\um_{23}) &:=  (1,\um_{33}); \qquad  &   (k,\um_{23})   &:=(k,(3^{k-1}2,\dots,\,3^1 2,\,2)), \;\;k\ge2
\label{i22} \\
   (1,\um_{24}) &:=  (1,\um_{34});\qquad   &  (k,\um_{24})  &:=(k, (4\textstyle{\times}3^{k-2} 2,\, 3^{k-2}2,\dots,\,3^1 2,\,2)), \;\;k\ge2.
\label{i222}
\end{align}
Our next result narrows down the form of the optimal strategy for each $p$ in $[0,1]$.
\begin{Theorem}[Optimal strategies]
  \label{1of4}
  If $p\ge 1-3^{-1/3}$ the optimal strategy is to test all individuals (no pooling). If $p\le1-3^{-1/3}$, then there is a $k=k(p)\ge1$ and a strategy  $(k,\um)$ optimal for $p$ satisfying
  \begin{align}
    (k,\um) &\in \bigl\{(k,\um_{23}),(k,\um_{24}),(k,\um_{33}),(k,\um_{34})\bigr\}.
  \end{align}
\end{Theorem}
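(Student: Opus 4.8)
The plan splits according to the size of $p$. Throughout I call a strategy \emph{reduced} if $m_{j-1}/m_j\ge 2$ for $2\le j\le k$ and $m_k\ge 2$; a non-reduced strategy is strictly more expensive than the reduced one obtained by deleting every pooled stage whose pool equals its parent and, when $m_k=1$, the last pooled stage (each deletion drops a positive term from \eqref{meankdep}), so it suffices to minimize over reduced strategies. For $p\ge 1-3^{-1/3}$ I would show, by induction on $k$, that $D_k(m,p)\ge 1$, the cost of no pooling. Comparing the two instances of \eqref{meankdep},
\begin{align*}
D_k(m,p)-D_{k-1}(m_1,\dots,m_{k-1},p)&=\frac{1-q^{m_{k-1}}}{m_k}+q^{m_{k-1}}-q^{m_k}\\
&=\frac1{m_k}+q^{m_{k-1}}\Bigl(1-\frac1{m_k}\Bigr)-q^{m_k}\ \ge\ \frac1{m_k}-q^{m_k},
\end{align*}
and $\tfrac1{m_k}-q^{m_k}\ge 0$ since $m_k\ge 2$ and $p\ge 1-3^{-1/3}=\max_{n\ge 2}\bigl(1-n^{-1/n}\bigr)\ge 1-m_k^{-1/m_k}$, i.e.\ $q^{m_k}\le m_k^{-1}$. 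The base case $k=1$ is the same line with the convention $m_0=\infty$, $q^{m_0}=0$, whence no pooling is optimal.

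For $0<p<1-3^{-1/3}$ it is convenient to rewrite the cost, with $m_0:=\infty$, $q^{m_0}:=0$, $m_{k+1}:=1$, as a sum of ``transition costs'' along the divisor chain $\infty=m_0>m_1>\dots>m_k>m_{k+1}=1$:
\begin{align*}
D_k(m,p)=\sum_{j=1}^{k+1}\frac{1-q^{m_{j-1}}}{m_j},\qquad\text{so that}\qquad \Dopt(p)=\inf_{M\ge 1}\Bigl(\tfrac1M+\kappa(M)\Bigr),
\end{align*}
where $\kappa(1)=0$ and $\kappa(M)=\min_{d\mid M,\,d<M}\bigl(\tfrac{1-q^{M}}{d}+\kappa(d)\bigr)$. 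I would then prove two things. (I) Every reduced strategy has cost at least that of some strategy of one of the four families $m_{23},m_{24},m_{33},m_{34}$ (possibly with a different number of stages). (II) Within each of those four families the cost, regarded as a function of the number of stages, tends to a finite limit and drops strictly below it, so it attains its minimum at a finite $k(p)$ — a short computation; e.g.\ for the family $m_{33}$ one has $D_{k+1}(m_{33},p)-D_k(m_{33},p)=3^{-k}\bigl(\tfrac13-q^{3^{k+1}}\bigr)$, negative exactly while $3^{k+1}<\ln 3/|\ln q|$. Statements (I) and (II) together produce an optimal strategy of one of the four forms and make a separate compactness argument for \eqref{optn} unnecessary.

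For (I) I would bring an arbitrary reduced chain to one of the four normal forms by a finite sequence of cost-non-increasing moves: \emph{subdividing} a transition $m_{j-1}\to m_j$ by inserting $m'$ with $m_j\mid m'\mid m_{j-1}$ (beneficial when the ratio is composite and too large); \emph{merging} two consecutive stages (beneficial when a ratio is too small); \emph{re-balancing} two adjacent ratios, replacing $m_j$ by $m_{j+1}m_{j-1}/m_j$, or more generally replacing a single $m_j$ by another admissible value; and \emph{adjusting} the last pool $m_k$. Each move changes the cost by one elementary quantity whose sign is settled by the strict convexity of $n\mapsto q^{n}$ together with the threshold inequality $1-3^{-1/3}\ge 1-n^{-1/n}$; organizing the moves by a lexicographic potential on the multiset of ratios, one arrives at a chain with $m_{j-1}/m_j=3$ for $3\le j\le k$, $m_1/m_2\in\{3,4\}$ and $m_k\in\{2,3\}$. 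The case $k=1$ is Dorfman's classical optimization, sharpened by the remark that a single-stage pool of size $\ge 5$ is beaten by a two-stage strategy, so the single-stage optimum has $m_1\in\{3,4\}$.

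The step I expect to be the real obstacle is the one forcing the \emph{interior} ratios to equal exactly $3$ (rather than $2$, $4$, or larger): the divisibility lattice forbids free re-balancing — a prime ratio $\ge 5$ admits no subdivision, and re-balancing adjacent ratios is available only when the arithmetic permits it — so one cannot relax the problem and differentiate. For $p$ bounded away from $0$ I would settle this by elementary inequalities and, on the short residual list of pool sizes that can occur in a competitive strategy, by direct numerical verification. For $p\to 0$ I would instead compare a candidate chain with the four parametrized families using the linearization of the cost developed in \S\ref{optim}, replacing $1-q^{m}$ by its leading behaviour in $pm$ with remainder controlled uniformly along the chain; the linearized per-stage cost is convex in the logarithm of the ratio, which forces all interior ratios to a common value, and integrality together with the fact that $\max_n(1-n^{-1/n})$ is attained at $n=3$ — the integer nearest the optimal real ratio $e$ — selects $3$. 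Keeping those remainders uniform, and ruling out a first ratio equal to $2$ or $\ge 5$, is the main technical burden.
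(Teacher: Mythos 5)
Your computations, where you actually carry them out, are correct: the telescoping bound $D_k(m,p)-D_{k-1}((m_1,\dots,m_{k-1}),p)\ge \tfrac1{m_k}-q^{m_k}\ge 0$ settles the regime $p\ge 1-3^{-1/3}$ cleanly (the paper only gets this implicitly via Lemma~\ref{p2}); the rewriting of the cost as a sum of transition costs along the divisor chain is the same as \eqref{ncosto4}; and your step (II), that inside each of the four families the cost in $k$ decreases and then increases so its minimum is attained, is right and usefully removes any compactness worry about the infimum in \eqref{optn}. Structurally, your plan of cost-non-increasing local moves (delete, merge, swap, adjust) is exactly how the paper proceeds, phrased there as necessary conditions on an assumed optimizer.

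The gap is that step (I) -- reducing an arbitrary chain to one of the four normal forms -- is the entire content of the theorem, and you announce a program for it rather than prove it. Each of the following facts is indispensable and each reduces to a concrete inequality on an explicit range that must actually be verified, none of which appears in your proposal: the last multiplier lies in $\{3,4\}$ (the paper minimizes $h_a(x)=\tfrac1x-e^{-ax}$ over integers $x\ge2$ subject to $h_a(x)\le0$ and invokes the a priori bound $m_1|\log q|\ge\log(3)/3$ of Lemma~\ref{p3} to kill the small-$a$ branch where larger ratios could win; Lemma~\ref{p4}); the multipliers are non-decreasing (sign of $(\opi-\upi)(z-1)+\upi\,\opi(z^{1/\opi}-z^{1/\upi})$ for $z=q^{m_1}\in[3^{-4/3},1]$, Lemma~\ref{p7}, where the lower bound on $z$ is itself an output of Lemma~\ref{p4}); the chain cannot end in $(4,4)$ (a comparison of polynomials up to degree $27$ in $x=q^{m_3}$ on the window $[0.9656,0.9775]$, Lemma~\ref{p5}); and no interior multiplier equals $2$ (positivity of $\tfrac12-x+x^2$ combined with the monotonicity just cited, Lemmas~\ref{p9} and~\ref{p10}). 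You correctly identify that your re-balancing move $m_j\mapsto m_{j+1}m_{j-1}/m_j$ can violate the divisor-chain constraint and that prime ratios $\ge5$ admit no subdivision, but naming the obstacle is not overcoming it; and your fallbacks -- numerics for $p$ bounded away from $0$, the linearization of \S\ref{optim} for $p\to0$ -- would at best yield the conclusion asymptotically or on a finitely checked range, whereas the theorem is asserted for every $p\in(0,1-3^{-1/3})$. As written, the proposal is a correct program whose decisive lemmas remain unproven.
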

The proof of this theorem follows from a series of lemmas and Proposition~\ref{p12}.

Given a nested strategy $(k,\um)$, define $m_{k+1}:=1$ and re-write the cost $D_k(\um,p)$ given in \eqref{meankdep} as
\begin{align}
\label{ncosto4}
D_k(\um,p)=1+\sum_{i=1}^k \Bigl(\frac{1}{m_i}-\frac{q^{m_i}}{m_{i+1}}\Bigr).
\end{align}
We define the sequence of {\it multipliers} associated to the strategy $(k,\um)$ as the vector $\upi=(\pi_1,\dots,\pi_k)\in \N^k$ given by the ratios of the pool sizes,  
\begin{align}
\label{multipliers}
\pi_j:=\frac{m_{k-j+1}}{m_{k-j+2}}, \qquad 1\le j\le  k,
\end{align}
and define $\pi_0=1$.
We thus have $\um=(m_1,\dots,m_k)=(\pi_k\cdots\pi_1,\dots,\pi_2 \pi_1,\pi_1)$. In the sequel we will use both $\um$ or $\upi$, according to convenience.

\begin{Lemma}
  \label{p1}
Let $k\ge 1$ and assume $(k,\um)$ is optimal for $p$. 
Then, for all $i \in\{ 1, \ldots, k\}$, we have
$
  \frac{1}{m_i} - \frac{q^{m_i}}{m_{i+1}} \leq 0.
$
\end{Lemma}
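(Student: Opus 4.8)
The plan is to argue by contradiction: suppose $(k,m)$ is optimal but for some index $i$ we have $\frac{1}{m_i} - \frac{q^{m_i}}{m_{i+1}} > 0$. I will produce a strictly cheaper strategy, contradicting optimality. The natural move is to \emph{delete the $i$-th stage} from the scheme, i.e. pass from $m=(m_1,\dots,m_{i-1},m_i,m_{i+1},\dots,m_k)$ to $m'=(m_1,\dots,m_{i-1},m_{i+1},\dots,m_k)$, a strategy with $k-1$ pooled stages. One must first check that $m'$ is still a valid nested strategy: since $m_{i-1}$ is a multiple of $m_i$ and $m_i$ is a multiple of $m_{i+1}$, it follows that $m_{i-1}$ is a multiple of $m_{i+1}$, so $m' \in {\cal M}_{k-1}$. (When $i=k$, deleting the last pooled stage means all individuals in positive pools of stage $k-1$ are tested directly; the formula \eqref{ncosto4} still applies with the convention $m_{k+1}=1$, and when $i=1$ we simply drop the outermost pool.)

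The key step is the cost comparison. Using the form of the cost in \eqref{ncosto4},
\begin{align}
D_k(m,p) - D_{k-1}(m',p) = \Bigl(\frac{1}{m_{i-1}}-\frac{q^{m_{i-1}}}{m_i}\Bigr) + \Bigl(\frac{1}{m_i}-\frac{q^{m_i}}{m_{i+1}}\Bigr) - \Bigl(\frac{1}{m_{i-1}}-\frac{q^{m_{i-1}}}{m_{i+1}}\Bigr),
\end{align}
where for $i=1$ the first and third terms are absent. The terms $\frac{1}{m_{i-1}}$ cancel, and the remaining contribution from the stages involving $m_{i-1}$ is $\frac{q^{m_{i-1}}}{m_{i+1}} - \frac{q^{m_{i-1}}}{m_i} = q^{m_{i-1}}\bigl(\frac{1}{m_{i+1}}-\frac{1}{m_i}\bigr) \ge 0$ since $m_i \ge m_{i+1}$; thus
\begin{align}
D_k(m,p) - D_{k-1}(m',p) \;\ge\; \frac{1}{m_i} - \frac{q^{m_i}}{m_{i+1}} \;>\; 0
\end{align}
by assumption. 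Hence $D_{k-1}(m',p) < D_k(m,p)$, contradicting the optimality of $(k,m)$. (A small edge case: if $k=1$ then $m'$ is the empty strategy "test everybody", with cost $1$; the same inequality chain, now with $m_0$ absent and $m_2=1$, gives $D_1(m,p)-1 \ge \frac{1}{m_1}-q^{m_1} > 0$, so testing everybody is strictly cheaper — again a contradiction.)

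I expect the main obstacle to be purely bookkeeping rather than conceptual: handling the boundary indices $i=1$ and $i=k$ consistently with the convention $m_{k+1}=1$, and making sure the deleted-stage strategy genuinely lies in ${\cal M}$ (the multiplicativity of the pool sizes is what guarantees this). There is no analytic difficulty — the inequality $q^{m_{i-1}}(\frac{1}{m_{i+1}}-\frac{1}{m_i})\ge 0$ is immediate from $q\in[0,1]$ and $m_i\ge m_{i+1}$ — so the proof is essentially a one-line telescoping of \eqref{ncosto4} once the right perturbation (stage deletion) is identified.
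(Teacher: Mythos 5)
Your proposal is correct and follows essentially the same route as the paper: assume some term $\frac{1}{m_i}-\frac{q^{m_i}}{m_{i+1}}$ is positive, delete the $i$-th stage, and observe via the telescoped cost \eqref{ncosto4} that the only other term that changes, namely the one indexed by $i-1$, can only decrease since $m_i>m_{i+1}$, yielding a strictly cheaper nested strategy and a contradiction. Your explicit treatment of the boundary cases $i=1$ and $i=k$ and of the nestedness of the deleted-stage scheme is a slightly more careful write-up of what the paper does implicitly.
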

\begin{proof}
  Suppose that the thesis  is not true. Therefore, there exists $j \in \{1, \ldots, k\}$ such that 
\begin{align} \label{nottrue}
   \frac{1}{m_j} - \frac{q^{m_j}}{m_{j+1}} > 0.
\end{align}
Obviously, $j$ must be smaller than $k$, otherwise the cost of $(m_1, \ldots, m_{k-1})$ would be smaller than the 
  cost of $(m_1, \ldots, m_k)$. 
Therefore, 
\begin{align}
D_k(\um,p) &=   1 +  \sum_{i=1}^{j-1} \left(\frac{1}{m_i} - \frac{q^{m_i}}{m_{i+1}} \right) + 
    \left(\frac{1}{m_j} - \frac{q^{m_j}}{m_{j+1}} \right) +   \sum_{i=j+1}^k \left(\frac{1}{m_i} - \frac{q^{m_i}}{m_{i+1}} \right)\notag\\
 &>   1 +  \sum_{i=1}^{j-1} \left(\frac{1}{m_i} - \frac{q^{m_i}}{m_{i+1}} \right) + 
   \sum_{i=j+1}^k \left(\frac{1}{m_i} - \frac{q^{m_i}}{m_{i+1}} \right)\qquad\text{(by \ref{nottrue})}\notag\\
   &>1+ \sum_{i=1}^{j-2} \left(\frac{1}{m_i} - \frac{q^{m_i}}{m_{i+1}} \right) 
   +\frac{1}{m_{j-1}} -\frac{q^{m_{j-1}}}{m_{j+1}}
   + \sum_{i=j+1}^k \left(\frac{1}{m_i} - \frac{q^{m_i}}{m_{i+1}} \right)
\end{align}
because $m_j > m_{j+1}$. The right hand side of the above inequality is the cost of the sequence $(m_1,\dots,m_{j-1},m_{j+1},\dots,m_k)$.
This proves that $(m_1, \ldots, m_k)$ cannot be the optimal strategy.
\end{proof}
\begin{Lemma}[Maximal infection probability for pooling]
   \label{p2}
Let $k\ge 1$ and assume $(k,\um)$ is optimal for~$p$.
Then, 
  \begin{align} \label{cotaq}
q \geq 3^{-1/3} \approx 0.693.
\end{align}
\end{Lemma}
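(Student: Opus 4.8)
The plan is to derive the bound from a single instance of Lemma~\ref{p1}, namely the one corresponding to the last pooled stage $i=k$, and then recognize the resulting scalar inequality as the one-variable optimization already carried out in~\eqref{pdorf}.

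If $p=0$ then $q=1>3^{-1/3}$ and there is nothing to prove, so assume $p>0$, hence $0<q<1$. Recall the convention $m_{k+1}:=1$ fixed just before~\eqref{ncosto4}. Applying Lemma~\ref{p1} with $i=k$ gives $\frac{1}{m_k}-\frac{q^{m_k}}{m_{k+1}}\le 0$, that is,
\begin{align}
\label{lastp}
q^{m_k}\ge \frac{1}{m_k}.
\end{align}
Since $q<1$, inequality~\eqref{lastp} rules out $m_k=1$, so any strategy optimal for $p>0$ must have $m_k\ge 2$. Taking $m_k$-th roots in~\eqref{lastp} yields $q\ge m_k^{-1/m_k}$, and therefore $q\ge \min_{n\ge 2} n^{-1/n}$.

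It remains to identify this minimum. The function $x\mapsto \frac{\log x}{x}$ increases on $(0,e)$ and decreases on $(e,\infty)$, so over the integers $n\ge 2$ it is largest at $n=3$ (the only competitor being $n=2$, and $\frac{\log 2}{2}<\frac{\log 3}{3}$); equivalently $n^{-1/n}=e^{-\log n/n}$ is smallest at $n=3$, which is precisely the content of~\eqref{pdorf}. Hence $q\ge 3^{-1/3}\approx 0.693$, as claimed. There is no real obstacle in this argument; the only points requiring a moment's care are the boundary convention $m_{k+1}=1$ and the remark that $m_k=1$ is incompatible with Lemma~\ref{p1} when $p>0$.
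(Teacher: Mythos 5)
Your proof is correct and follows essentially the same route as the paper: both apply Lemma~\ref{p1} at the last index $i=k$ (with the convention $m_{k+1}=1$) to get $q^{m_k}\ge 1/m_k$, and then reduce to the single-variable fact that $n^{-1/n}$ over the integers is minimized at $n=3$, exactly as in~\eqref{pdorf}. Your explicit handling of the cases $p=0$ and $m_k=1$ is a small added carefulness that the paper leaves implicit, but it does not change the argument.
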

\begin{proof} By Lemma \ref{p1}, with
 $m_{k+1}=1$, $\frac{1}{m_k} - q^{m_k} \leq 0$. This
 means that $\min_{n\in \mathbb{N}}\left(\frac{1}{n} - {q^{n}}\right)\le 0$. Write $r = 1/q$. Then,
  $1/n - q^n < 0$   iff   $1/n < q^n$   iff  $ n > (1/q^n)$  iff  $n > (1/q)^n$   iff  $n > r^n $ iff  $r^n - n < 0$.  But $(3^{1/3})^3 - 3 = 0$ and $(r^3 - 3)' > 0$ for all 
  $r>0$, then $r^3 - 3 < 0$ for all $r < 3^{1/3}$ and $r^3 - 3 > 0$ for all $r > 3^{1/3}$. 
    Now, $r^n - n > 3^{n/3} - n$ for all $r > 3^{1/3}$, therefore, it is enough to prove that
   $3^{n/3}-n > 0$ for all $n \neq 3$. This fact can be verified by direct calculation for 
  $n \leq 4$. For $n > 4$ it can be verified, taking derivatives, that the function 
  $3^{n/3}-n$ is increasing. Therefore the required inequality is verified for all $n \neq 3$ as
  we wanted to prove.
\end{proof}
\begin{Lemma}[Bounds for $m_1$ and $m_2$]
\label{p3}
Let $k\ge 1$ and $(k,\um)$ be a nested strategy.
Then,
\begin{align}
  \label{10.10}
  D_k(m,p) \leq (<)\, D_{k-1}((m_2 ,\ldots, m_k),p) &\quad\text{ if and only if }\quad m_2 |\log(q)| \leq (<) \, \frac{ \log(\pi_{k})}{\pi_{k}}, 
 \end{align}
and any of the statements in \eqref{10.10} implies
 \begin{align}
 \label{10.1}
 m_2 |\log(q)| \leq (<)\, \log(3)/3 \approx 0.366. 
\end{align}
In particular, if $D_{k+1}((m_0, m_1,\ldots, m_k),p) \geq\, D_{k}(m,p)$ for every multiple $m_0$ of $m_1$, then
\begin{align}
  \label{33p}
  \frac{\log(3)}{3|\log(q)|}\le m_1 .
\end{align}
\end{Lemma}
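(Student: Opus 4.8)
The plan is to deduce everything from one elementary identity. With the convention $m_{k+1}:=1$, the telescoped cost \eqref{ncosto4} for the strategy $(k,m)$ and for the truncated strategy $(k-1,(m_2,\dots,m_k))$ share every summand $\frac1{m_i}-\frac{q^{m_i}}{m_{i+1}}$ with $i\ge 2$, so that
\begin{align*}
D_k(m,p)-D_{k-1}\bigl((m_2,\dots,m_k),p\bigr)=\frac{1}{m_1}-\frac{q^{m_1}}{m_2}.
\end{align*}
Hence the left-hand inequality in \eqref{10.10} holds, weakly (resp.\ strictly), exactly when $\frac1{m_1}\le(<)\frac{q^{m_1}}{m_2}$, i.e.\ $\frac{m_2}{m_1}\le(<)q^{m_1}$. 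Substituting $m_1=\pi_k m_2$ from \eqref{multipliers}, taking logarithms, and using $q<1$ so that $\log q=-|\log q|$, this is equivalent to $\pi_k m_2|\log q|\le(<)\log\pi_k$; dividing by $\pi_k>0$ gives exactly \eqref{10.10}.

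For the implication \eqref{10.1} I would use that $\pi_k=m_1/m_2$ is a positive integer and bound $\frac{\log\pi_k}{\pi_k}$ by the maximum of $f(x):=\frac{\log x}{x}$ over the positive integers. Since $f'(x)=\frac{1-\log x}{x^2}$, the function $f$ increases on $(0,e)$ and decreases on $(e,\infty)$; combining this with $f(1)=0$, with $f(3)>f(2)$ (equivalently $2\log 3>3\log 2$, i.e.\ $9>8$), and with the fact that $f$ decreases for $x\ge 3>e$, one sees that the maximum of $f$ over the positive integers is $f(3)=\frac{\log 3}{3}$. Together with \eqref{10.10} this yields $m_2|\log q|\le(<)\frac{\log\pi_k}{\pi_k}\le\frac{\log 3}{3}$.

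Finally, for the ``in particular'' statement I would apply the equivalence \eqref{10.10} to the extended strategy $(k+1,(m_0,m_1,\dots,m_k))$, in which the role of ``$m_2$'' is played by $m_1$ and that of ``$\pi_k$'' by the positive integer $\pi:=m_0/m_1$. The hypothesis $D_{k+1}\bigl((m_0,m_1,\dots,m_k),p\bigr)\ge D_k(m,p)$ is precisely the negation of the strict form of \eqref{10.10}, hence $m_1|\log q|\ge\frac{\log\pi}{\pi}$, and this holds for every multiple $m_0$ of $m_1$, i.e.\ for every positive integer $\pi$; taking $\pi=3$, the maximizer found above, gives $m_1|\log q|\ge\frac{\log 3}{3}$, which is \eqref{33p}. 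The argument is entirely elementary; the only point that needs care is that $\pi_k$ (and later $\pi=m_0/m_1$) ranges over the integers rather than the reals, so that the sharp constant is $f(3)=\frac{\log 3}{3}$, not $f(e)=1/e$, and $3$ is the correct value to substitute once we are free to choose $m_0$. I do not foresee any genuine obstacle.
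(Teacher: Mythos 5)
Your proposal is correct and follows essentially the same route as the paper: the telescoped cost identity $D_k(m,p)-D_{k-1}((m_2,\dots,m_k),p)=\frac{1}{m_1}-\frac{q^{m_1}}{m_2}$, the observation that its sign matches that of $m_2|\log q|-\frac{\log\pi_k}{\pi_k}$, the maximization of $\log z/z$ over the integers at $z=3$, and the choice $m_0=3m_1$ in the extension step. Your write-up is in fact slightly more explicit than the paper's at the last step, where the paper loosely says ``\eqref{10.1} fails'' while you correctly reduce it to applying the equivalence with $\pi=3$.
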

\begin{proof}
Recalling $\pi_k= m_1/m_2$ we have $\pi_k\ge 2$ and by (\ref{ncosto4}),
\begin{align}
  \label{34pi}
  D_{k}(m_1,\ldots, m_k,p)- D_{k-1}(m_2, \ldots, m_k,p) &= \frac{1}{m_1} -   \frac{q^{m_1}}{m_2}= \frac{1}{\pi_{k} m_2} -   \frac{q^{\pi_{k} m_2}}{m_2}.                                                  
\end{align}
To get the equivalences in \eqref{10.10} note that
\[
\frac{1}{\pi_k m_2} - q^{\pi_k m_2}{m_2} < 0\, (>0)
\]
if and only if
\begin{align}
  \label{33pi}
  m_2 |\log(q)|- \frac{ \log(\pi_{k})}{\pi_{k}} < 0\, (>0).
\end{align}
 This result is obtained by algebraic manipulation using that $\log(q) < 0$. 
Since the maximum of $\log(z)/z$ for $z \in \{2, 3, \ldots\}$ occurs   at $z = 3$, the second inequality in \eqref{10.10} implies \eqref{10.1}. On the other hand,  if $(k,\um)$ cannot be expanded (i.e., another stage cannot be added) to reduce the cost \eqref{ncosto4}, we have that \eqref{10.1} fails for $(k+1,(m_0,\dots,m_k))$, implying $m_1 \alogq \geq  \log(3)/3$, which is \eqref{33p}.
\end{proof}

\begin{Lemma}[Last multiplier is $3$ or $4$]
  \label{p4}
Let $k\ge1$ and assume that   $(k,\um)$ is optimal for $p$. Then
\begin{align}\label{3o4}
\pi_k = 3 \mbox{ or } \pi_k = 4.
\end{align}
Moreover, 
\begin{align}\label{lowbou}
q^{m_1} \ge 3^{-4/3} \approx 0.231 \iff m_1\le \frac{\frac{4}{3}\log(3)}{|\log(q)|}.
\end{align}
\end{Lemma}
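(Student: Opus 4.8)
The plan is to exploit the three optimality perturbations already isolated in Lemmas~\ref{p1} and~\ref{p3}. Throughout write $q=1-p$, adopt the convention $m_{k+1}:=1$, and set $a:=m_2\,|\log(q)|$, so that $m_1\,|\log(q)|=\pi_k\,a$. First I would note that deleting the first stage turns $D_k(m,p)$ into $D_{k-1}((m_2,\dots,m_k),p)$ and, by \eqref{34pi}, replacing $m_1$ by $\pi m_2$ alters this difference only through the term $\tfrac1{m_2}\bigl(\tfrac1\pi-q^{\pi m_2}\bigr)$; hence optimality of $(k,m)$ forces
\begin{align*}
h_a(\pi_k)\le h_a(\pi)\quad\text{for every integer }\pi\ge 2,\qquad\text{where }h_a(\pi):=\frac1\pi-e^{-a\pi}.
\end{align*}
Next, since an optimal strategy cannot be improved by removing its first stage, the equivalence \eqref{10.10} gives $a\le \log(\pi_k)/\pi_k$; and since it cannot be improved by prepending a stage, \eqref{33p} gives $m_1\,|\log(q)|\ge \log(3)/3$, i.e.\ $\pi_k\,a\ge \log(3)/3$. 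Thus $\pi_k\,a$ is confined to $[\log(3)/3,\ \log\pi_k]$. (For $k=1$ this is literally the same computation, with $m_2=1$, $\pi_1=m_1$, and $D_0\equiv 1$ in the r\^ole of ``removing the first stage''; alternatively one may simply quote Proposition~\ref{p12} there.)

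Granting \eqref{3o4}, the bound \eqref{lowbou} is then immediate: $q^{m_1}=e^{-\pi_k a}\ge e^{-\log\pi_k}=1/\pi_k\ge 1/4>3^{-4/3}$ (using $4^3<3^4$), and the displayed equivalence in \eqref{lowbou} is just the monotonicity of $t\mapsto q^t$. So the crux is \eqref{3o4}, which I would prove by contradiction, ruling out $\pi_k=2$ and $\pi_k\ge 5$. If $\pi_k=2$, minimality gives $h_a(2)\le h_a(3)$, i.e.\ $e^{-2a}-e^{-3a}\ge 1/6$, while the two constraints above pin $a$ to $[\log(3)/6,\ \log(2)/2]$; on that interval $e^{-2a}-e^{-3a}$ is increasing (its peak is at $a=\log(3/2)>\log(2)/2$) and stays strictly below $1/6$ (its value at $a=\log(2)/2$ is $\tfrac12-2^{-3/2}\approx 0.146$), a contradiction. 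If $\pi_k=n\ge 5$, then $h_a(n)\le h_a(n-1)$ rearranges to
\begin{align*}
g_{n-1}(a):=e^{-(n-1)a}\bigl(1-e^{-a}\bigr)\le \frac1{n(n-1)},
\end{align*}
and one must show this fails for every $a$ in the window $\bigl[\tfrac{\log 3}{3n},\ \tfrac{\log n}{n}\bigr]$. Writing $a=c/n$ with $c\in[\log(3)/3,\ \log n]$, one has $e^{-(n-1)c/n}\ge e^{-c}$ and $1-e^{-c/n}\ge\tfrac cn\bigl(1-\tfrac c{2n}\bigr)$, hence $g_{n-1}(a)\ge \tfrac1n\,c\,e^{-c}\bigl(1-\tfrac c{2n}\bigr)$; since $c\le\log n$ the last factor exceeds $1-\tfrac{\log n}{2n}$, and $c\,e^{-c}$ is unimodal with peak at $c=1$, so on $[\log(3)/3,\ \log n]$ it is bounded below by the smaller of its two endpoint values, $\tfrac{\log 3}{3}\,3^{-1/3}$ and $(\log n)/n$. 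Inserting these estimates gives $g_{n-1}(a)>\tfrac1{n(n-1)}$ for all $n\ge 6$ (splitting into $6\le n\le 8$ and $n\ge 9$ according to which endpoint of $c\,e^{-c}$ is the smaller), and the single remaining value $n=5$ is handled directly by checking that $e^{-4a}-e^{-5a}>1/20$ throughout $[\tfrac{\log 3}{15},\tfrac{\log 5}{5}]$ (its minimum there, at the left endpoint, is $\approx 0.053$).

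The main obstacle is precisely this last step. The function $g_{n-1}$ is not monotone on the admissible window: it rises to a peak near $a=\log\frac{n}{n-1}$ and then falls, so one cannot read off its minimum at a single endpoint without first controlling where the peak sits relative to the window, and the crude estimates $1-e^{-a}\ge a e^{-a}$, $e^{-na}\ge 1/n$ are too lossy, which is why the sharper $c$-dependent bound above is needed. Everything else is routine bookkeeping: the reduction to minimizing $h_a$, the two inequalities extracted from Lemma~\ref{p3}, and the passage from \eqref{3o4} to \eqref{lowbou}.
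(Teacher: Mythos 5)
Your proof is correct, and it shares the paper's basic reduction: both arguments isolate the last multiplier through the identity $D_k(m,p)-D_{k-1}((m_2,\dots,m_k),p)=\frac1{m_2}\bigl(\frac1{\pi_k}-e^{-a\pi_k}\bigr)$ with $a=m_2\alogq$, so that optimality forces $\pi_k$ to minimize $h_a(x)=\frac1x-e^{-ax}$ over integers with $h_a\le 0$, and both invoke Lemma~\ref{p3} to confine $\pi_k a$ to $[\log(3)/3,\log \pi_k]$. Where you genuinely diverge is in how that discrete minimization is settled. The paper passes to the real relaxation of \eqref{probax} (the equation $e^{ax}=ax^2$), introduces the numerically computed crossover values $a_1\approx 0.0678$ and $a_2\approx 0.1323$ at which $h_a(5)=h_a(4)$ and $h_a(4)=h_a(3)$, asserts the resulting case list ``by elementary calculus'', and eliminates all large multipliers at once via the monotonicity in $a$ of the implicit solution of $e^z=z^2/a$, which forces $ax\le 0.3031<\log(3)/3$. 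You never leave the integers: you rule out $\pi_k=2$ and $\pi_k=n\ge 5$ by the single pairwise comparison $h_a(n)\le h_a(n-1)$ (resp.\ $h_a(2)\le h_a(3)$) restricted to the explicit window $a\in[\frac{\log 3}{3n},\frac{\log n}{n}]$, with a uniform elementary lower bound on $e^{-(n-1)a}(1-e^{-a})$ for $n\ge 6$ and direct endpoint checks for $n=2$ and $n=5$. The numerics check out: the $n=2$ window maximum is $\frac12-2^{-3/2}\approx 0.146<\frac16$ (indeed the global maximum $4/27$ already falls short, so $x=2$ is never a minimizer); the $n=5$ minimum of $e^{-4a}-e^{-5a}$ on the window is $\approx 0.0527>\frac1{20}$, attained at the left endpoint since the peak sits at $a=\log(5/4)$ inside the window; and the switch between which endpoint of $c e^{-c}$ is smaller does occur between $n=8$ and $n=9$, with the resulting bounds comfortably exceeding $\frac1{n-1}$ in both regimes. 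Your route buys a fully explicit, verifiable argument that avoids any appeal to an implicitly defined real minimizer; the paper's route buys the constants $a_1,a_2$, which it reuses in Lemma~\ref{p5}. One small remark: your derivation of \eqref{lowbou} from $\pi_k a\le\log\pi_k$ actually yields the stronger conclusion $q^{m_1}\ge 1/4$, whereas the paper's threshold $3^{-4/3}$ is the weaker bound coming from $m_1\le 4m_2$ together with \eqref{10.1}; either version suffices for the subsequent lemmas.
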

\begin{proof}
The cost associated to $(k,(m_1, m_2,\ldots, m_k))$ is the cost associated to $(k-1,(m_2, \ldots,m_k))$ plus $V(m_1)$, where
\begin{align*}
V(m_1) =  \frac{1}{m_1} -   \frac{q^{m_1}}{m_2} = \frac{1}{ \pi_k m_2} -   \frac{q^{\pi_k m_2}}{m_2}   
  = \frac{1}{m_2}  \left( \frac{1}{ \pi_k } -   q^{\pi_k m_2} \right).  
\end{align*}
As $\um$ is a minimizing sequence of \eqref{ncosto4}, $V(m_1)\le0$ and $\pi_k$ is given by
 \begin{align} \label{minive}
\pi_k= \argmin\bigl\{ \sfrac{1}{ x } -   q^{x m_2}:  {x=2,3,\dots}\text{ and }\sfrac{1}{x}-q^{x m_2}\le 0\bigr\}.
\end{align}
   Let us write
$
q^{x m_2} = e^{m_2(\log q ) x} = e^{- m_2 \alogq x},\, a = m_2 \alogq.
$
With this notation, finding $\pi_k$ that satisfies \eqref{minive} reduces to 
\begin{align} \label{probax}
\mbox{ Minimize  }  h_a(x)=\frac{1}{ x } -  e^{-a x}  \mbox{  for } x = 2, 3, \dots\,\ \mbox{ and }\  \frac{1}{x}-e^{-ax}\le0.
\end{align}       
 By  elementary calculus, this optimization problem, when solved over the real numbers, reduces to 
finding $x\ge 0$ such that $e^{ax}=ax^2$ and $e^{ax}\ge x$. The solution to \eqref{probax} is found by comparing the values of $\frac{1}{x}-e^{-ax}$ at the integers that are closest to this real solution. 

Let $a_1\approx 0.067836$ and $a_2\approx 0.1323239$ be the values of $a \in (0,1/e)$ such that $h_a(5)=h_a(4)$ and $h_a(4)=h_a(3)$, respectively.
We find that
\begin{enumerate}
\item the problem \eqref{probax} has no solution if $a \ge 1/e$. 
  This corresponds to the case
  when, in fact, the sequence $(m_2, \ldots, m_k)$ cannot be further expanded.

\item $x = 2$ is never a solution of \eqref{probax}.

\item If $a \in (a_2, 1/e)$ the unique solution of (\ref{probax}) is $x = 3$.

\item If $a \in [a_1, a_2)$ the unique solution of (\ref{probax}) is $x = 4$.

\item If $ a=a_2$ both $x=3$ and $x=4$ are solutions  of (\ref{probax}), with the same cost. 

\item If $a< a_1$ the product $ a x\le 0.3031=a_1\tilde{x}$, where $\tilde{x}$ is the solution of \eqref{probax} for $a=a_1$. The inequality follows from observing that the solution to $e^z=z^2/a$ is increasing in $a$.
\end{enumerate}        
Now $m_1|\log(q)|=ax$ and Lemma \ref{p3} imply $ax\ge \log(3)/3> 0.3031$, ruling out item (6) above. This finishes the proof.
\end{proof}

 \begin{Lemma}[Last two multipliers cannot be 4 and 4]
  \label{p5}
  Let $k\ge 2$, assume that $(k,\um)$ is optimal for~$p$ and that $\pi_k = 4$. Then
$
\pi_{k-1} \neq 4.
$
\end{Lemma}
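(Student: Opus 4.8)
The natural strategy is to suppose, for contradiction, that $(k,m)$ is optimal with $\pi_k=\pi_{k-1}=4$, and to exhibit a competing strategy with strictly smaller cost. Since the two highest multipliers are both $4$, we have $m_1 = 16\,m_3$ and $m_2 = 4\,m_3$, where $m_3$ denotes the size of the $(k-2)$-th pool (with the convention $m_3=1$ if $k=2$). Using the decomposition \eqref{ncosto4}, only the terms indexed by $i=1,2$ change if we alter $\pi_k$ and $\pi_{k-1}$ while keeping $m_3$ fixed, so the cost difference between $m$ and an alternative $m'=(\pi_k' m_2', \pi_{k-1}' m_3, m_3,\ldots)$ with the same tail is
\[
D_k(m,p)-D_k(m',p)=\Bigl(\frac{1}{m_1}-\frac{q^{m_1}}{m_2}+\frac{1}{m_2}-\frac{q^{m_2}}{m_3}\Bigr)-\Bigl(\frac{1}{m_1'}-\frac{q^{m_1'}}{m_2'}+\frac{1}{m_2'}-\frac{q^{m_2'}}{m_3}\Bigr).
\]
Writing $a:=m_3|\log q|$ as in the proof of Lemma~\ref{p4}, this reduces to comparing, as a function of $a$, the quantity $f_{4,4}(a):=\tfrac{1}{16}-\tfrac14 e^{-16a}+\tfrac14 e^{-4a}-e^{-4a}$ against the analogous expression $f_{\mu,\nu}(a)$ for candidate replacements $(\pi_k,\pi_{k-1})=(\mu,\nu)$. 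The plan is to show $f_{4,4}(a)$ is never the minimum over the admissible pairs on the relevant range of $a$.

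The range of $a$ is pinned down by the earlier lemmas: Lemma~\ref{p4} applied at stage $k$ (multiplier $\pi_k=4$) forces $m_2|\log q|=4a\in[a_1,a_2)$, i.e. $a\in[a_1/4,a_2/4)$, and applied at stage $k-1$ (multiplier $\pi_{k-1}=4$ acting on a pool of size $m_2=4m_3$) it forces $m_3|\log q|=a\in[a_1,a_2)$ as well — but these two constraints are incompatible since $a_2/4\approx 0.033 < a_1\approx 0.068$. That already looks like the whole proof: the hypothesis $\pi_k=\pi_{k-1}=4$ is self-contradictory via Lemma~\ref{p4}, because $\pi_k=4$ requires $m_2|\log q|$ small (in $[a_1,a_2)$) while $\pi_{k-1}=4$ requires the four-times-larger quantity $m_2|\log q|$ to lie in the \emph{same} small interval. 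So the cleanest route is: apply \eqref{minive}/\eqref{probax} of Lemma~\ref{p4} twice, once with the roles $(\pi_k, m_2)$ and once with $(\pi_{k-1}, m_3)$, obtain $4a, a\in[a_1,a_2)$ simultaneously, and derive the numerical contradiction $4a<a_2<a_1\le a$... wait, I should double-check the direction, since $4a\ge a_1$ gives $a\ge a_1/4$ which is weak; the binding one is $4a<a_2$, giving $a<a_2/4<a_1$, contradicting $a\ge a_1$.

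The step I expect to require the most care is the bookkeeping around the boundary case $k=2$ and making sure Lemma~\ref{p4}'s characterization really does apply to the sub-strategy at stage $k-1$: Lemma~\ref{p4} is stated for the \emph{last} multiplier of an optimal strategy, so I need to argue that if $(k,m)$ is optimal then the truncated optimization "freeze $m_3,\ldots,m_k$, optimize $\pi_k,\pi_{k-1}$" is governed by exactly the two-variable problem whose minimizer, in each variable, obeys \eqref{probax} — in other words that optimality of $m$ forces $\pi_{k-1}$ to be an optimal choice of multiplier $x$ for the effective parameter $a=m_3|\log q|$ in problem \eqref{probax}, and likewise $\pi_k$ for parameter $m_2|\log q|$. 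Once that reduction is in hand the contradiction is purely arithmetic with the constants $a_1,a_2$ already computed in Lemma~\ref{p4}. If, for safety, one prefers not to invoke the single-variable characterization for $\pi_{k-1}$ directly, the fallback is to exhibit the explicit improving move (e.g. replacing $(4,4)$ by $(3,3)$ or $(3,4)$ on the same tail) and verify $f_{4,4}(a) > \min\{f_{3,3}(a), f_{3,4}(a)\}$ on $a\in[a_1/4, a_2/4)$ by elementary estimates on these exponential expressions.
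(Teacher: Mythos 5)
Your primary route does not close, and the place where it breaks is exactly the step you flagged as needing care. Lemma~\ref{p4} characterizes the \emph{last} multiplier through the problem \eqref{probax}, $\min_x \frac1x-e^{-ax}$, precisely because changing $m_1$ with $m_2,\dots,m_k$ fixed alters only the single term $\frac{1}{m_1}-\frac{q^{m_1}}{m_2}$ of \eqref{ncosto4}. For $\pi_{k-1}$ the local problem is different: varying $m_2$ with $m_1,m_3,\dots,m_k$ fixed changes \emph{two} terms, and the effective one-variable problem is $\min_x \bigl(\frac{1-q^{m_1}}{x}-q^{xm_3}\bigr)$, with $x$ further restricted to divisors of $m_1/m_3$. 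Since $q^{m_1}\ge 3^{-4/3}$ (Lemma~\ref{p4}), the coefficient $1-q^{m_1}\le 0.769$ is strictly below $1$, so the thresholds $a_1,a_2$ do not transfer to level $k-1$; in particular the lower bound $m_3|\log q|\ge a_1$ on which your ``self-contradiction'' hinges is not justified (the condition for $4$ to beat larger multipliers moves to \emph{smaller} $a$ when the coefficient shrinks, and is compatible with $m_3|\log q|<a_2/4$). So applying Lemma~\ref{p4} twice does not yield the claimed numerical contradiction.

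Your fallback fails as well, for a concrete numerical reason. On the range $a=m_3|\log q|\in[a_1/4,a_2/4)\approx[0.017,0.033)$ forced by $\pi_k=4$, the head contribution of $(16m_3,4m_3)$, namely $\frac1{16}-\frac14 e^{-16a}+\frac14-e^{-4a}$ (note your $f_{4,4}$ has $\frac14 e^{-4a}$ where it should be $\frac14$), is \emph{lower} than that of the same-length heads you propose: at $a=0.025$ one gets $\approx-0.760$ for $(16m_3,4m_3)$ versus $\approx-0.749$ for $(9m_3,3m_3)$ and $\approx-0.757$ for $(12m_3,4m_3)$, and at the low end $a\approx0.017$ no two-multiplier head with entries in $\{3,4\}$ beats $(16m_3,4m_3)$. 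The improving move must \emph{add a stage}: the paper replaces $(16m_3,4m_3)$ by $(27m_3,9m_3,3m_3)$, reduces the comparison to a polynomial inequality in $x=q^{m_3}$ (valid for $x>0.9408$), and verifies it on the interval $[0.9656,0.9775]$ obtained by combining the $\pi_k=4$ window with the lower bound $m_1|\log q|\ge\log(3)/3$ from Lemma~\ref{p3}. To repair your proof you would need to adopt this extra-stage comparison (or some other length-increasing substitution); the same-length substitutions you name cannot work on the whole admissible range.
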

\begin{proof}
  Since $\pi_k = 4$, the analysis of the function   $\frac{1}{ x } -  e^{-a x}$ provided in the proof
  of Lemma \ref{p4} implies that
\begin{align} \label{a14}
a = m_2 \alogq \quad\text{ satisfies }\quad
 a \in [a_2,a_1]\approx [0.0678, 0.1323].
\end{align}  
Since $(k,\um)$ is optimal, \eqref{10.1} in Lemma \ref{p3} implies that  
  $m_1 \alogq > 0.366$. Thus, as $m_1 = 4 m_2$, it turns out that 
  $m_2 \alogq > 0.366/4 = \log(3)/12 = 0.0915$. Therefore, 
by \eqref{a14}, 
\begin{align} \label{nkl1}
m_2 \alogq \in [0.0915, a_1].
\end{align}

Recall that we follow the convention that $m_{k+1}=1$, so that  $m_3$ is well defined for $k\ge 2$.    
 By the argument leading to \eqref{nkl1}, if $\pi_{k-1} = 4$, 
$
4 m_3 \alogq \in [0.0915, a_1].
$
After some elementary manipulation this implies that 
\begin{align} \label{9656}
    q^{ m_3}  \in [0.9656, 0.9775].
\end{align}   
 We will show that, in all the situations in which (\ref{9656}) holds, 
\begin{align}\label{issmaller} 
D_{k+1}(27 m_3, 9m_3, 3m_3, m_3,\dots,m_k) < D_k(16 m_3, 4m_3, m_3, \dots, m_k).
\end{align}
By (\ref{ncosto4}), (\ref{issmaller}) holds if and only if 
\begin{align*}
  &\left(\frac{1}{3 m_3} + \frac{1}{9 m_3} +   \frac{1}{27 m_3}       \right) - 
            \left( \frac{q^{3 m_3}}{m_3}  + \frac{q^{9 m_3}}{3 m_3} 
   +    \frac{q^{27 m_3}}{9 m_3}             \right) \nonumber\\ 
&\hspace{3.5cm}<    \left(  \frac{1}{4 m_3} + \frac{1}{16 m_3}\right) - 
            \left(  \frac{q^{4 m_3}}{m_3}  + \frac{q^{16 m_3}}{4 m_3}           \right).  
\end{align*}
This is equivalent to
\begin{align*}
  \left( \frac{1}{3 } + \frac{1}{9 } + \frac{1}{27} \right) - 
             \left( q^{3 m_3}  + \frac{q^{9 m_3}}{3 } 
   +    \frac{q^{27 m_3}}{9 }             \right)     
<    \left(  \frac{1}{4 } + \frac{1}{16 }\right) - 
            \left(  {q^{4 m_3}}  + \frac{q^{16 m_3}}{4 }           \right).  
\end{align*}
Defining $x = q^{m_3}$ this is equivalent to
\[
  \left( \frac{1}{3 } + \frac{1}{9} + \frac{1}{27 }\right) - 
             \left( x^3  + \frac{x^9}{3} + \frac{x^{27}}{9 } \right)      
<    \left(  \frac{1}{4 } + \frac{1}{16 }\right) - 
             \left(  x^4  + \frac{x^{16} }{4 }           \right).   
\]

This inequality holds for $x \in [0, 1]$ 
 if and only if $x \in (0.9407831, 1)$. In particular, it holds for all $x \in   [0.9656, 0.9775]$. By \eqref{9656} and \eqref{issmaller}, this implies that the $\um=(m_1,\dots,m_k)$ is not optimal, a contradiction that follows from the assumption that $\pi_k=\pi_{k-1}=4$.
\end{proof}

  In the following Lemma we prove that the sequence of multipliers associated to an optimal strategy is non-decreasing. 
  \begin{Lemma}[Multipliers are non decreasing]
    \label{p7}
    Let $k\ge 2$ and assume $(k,\um)$ is optimal for $p$.
    Then
\begin{align*}
\pi_1 \le \dots \le \pi_k.
\end{align*} 
  \end{Lemma}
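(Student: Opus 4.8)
The plan is to argue by contradiction: suppose there is some index $j$ with $\pi_j > \pi_{j+1}$, and show that swapping the two multipliers (or, equivalently, modifying the pool sizes at the two corresponding stages) strictly decreases the cost, contradicting optimality. Recall that the multipliers are indexed from the last stage, so $\pi_j = m_{k-j+1}/m_{k-j+2}$; writing $i = k-j+1$, the hypothesis $\pi_j > \pi_{j+1}$ corresponds to consecutive pool sizes $m_{i-1} > m_i > m_{i+1}$ (with $m_{k+1}=1$) satisfying $m_{i-1}/m_i > m_i/m_{i+1}$, i.e. $m_i^2 < m_{i-1} m_{i+1}$. The idea is to replace $m_i$ by a new value $m_i'$ with $m_{i+1} \mid m_i'$ and $m_i' \mid m_{i-1}$, chosen so that the two ratios become more balanced, and to show the cost goes down.

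First I would isolate the only two terms of the cost \eqref{ncosto4} that involve $m_i$: namely
\begin{align*}
  f(m_i) := \frac{1}{m_i} - \frac{q^{m_i}}{m_{i+1}} \;+\; \frac{1}{m_{i-1}} - \frac{q^{m_{i-1}}}{m_i}.
\end{align*}
Holding $m_{i-1}$ and $m_{i+1}$ fixed, the claim reduces to: among the admissible values of $m_i$ (divisors of $m_{i-1}$ that are multiples of $m_{i+1}$), the minimizer cannot have $m_i^2 < m_{i-1}m_{i+1}$. I would analyze $f$ as a function of a real variable $t$ running over the interval $[m_{i+1}, m_{i-1}]$: we have $f(t) = \tfrac1t - \tfrac{q^t}{m_{i+1}} + \tfrac1{m_{i-1}} - q^{m_{i-1}}\tfrac1t$, so $f'(t) = -\tfrac{1+ q^{m_{i-1}}}{t^2} - \tfrac{(\log q) q^t}{m_{i+1}}$. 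Since Lemma~\ref{p1} gives $\tfrac{1}{m_i} \le \tfrac{q^{m_i}}{m_{i+1}}$ at optimality and Lemma~\ref{p2} gives $q \ge 3^{-1/3}$, one can pin down the sign behaviour of $f'$: it is unimodal in $t$ (negative then positive, or monotone), so $f$ restricted to $[m_{i+1},m_{i-1}]$ is either monotone or first decreasing then increasing. The key quantitative input is that the turning point, where $-(1+q^{m_{i-1}})/t^2 = (\log q)q^t/m_{i+1}$, lies at a value of $t$ that is at least the geometric mean $\sqrt{m_{i-1}m_{i+1}}$; combined with the divisibility constraints this forces the optimal integer $m_i$ to satisfy $m_i \ge \sqrt{m_{i-1}m_{i+1}}$, i.e. $\pi_j \le \pi_{j+1}$.

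Alternatively — and this may be the cleaner route — I would do the comparison directly with integer moves rather than calculus. Suppose $\pi_j > \pi_{j+1}$, so with $a := \pi_{j+1} = m_i/m_{i+1}$ and $b := \pi_j = m_{i-1}/m_i$ we have $b > a \ge 2$. Consider the alternative strategy that replaces the block of pool sizes $(\dots, m_{i-1}, m_i, m_{i+1}, \dots)$ by $(\dots, m_{i-1}, m_i', m_{i+1}, \dots)$ where $m_i' = b\, m_{i+1} = m_{i-1}/a$ (so the two multipliers are exactly swapped, $a \leftrightarrow b$; this is legal since $m_i'$ is a multiple of $m_{i+1}$ and divides $m_{i-1}$). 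The cost difference is
\begin{align*}
  \Delta &= \Bigl(\tfrac{1}{m_i'} - \tfrac{q^{m_i'}}{m_{i+1}} - \tfrac{q^{m_{i-1}}}{m_i'}\Bigr)
           - \Bigl(\tfrac{1}{m_i} - \tfrac{q^{m_i}}{m_{i+1}} - \tfrac{q^{m_{i-1}}}{m_i}\Bigr).
\end{align*}
Writing $m_{i+1} = m$, $m_i = am$, $m_i' = bm$, $m_{i-1} = abm$, this is a function of $m$, $q$, $a$, $b$; I expect that using $q^{am} = (q^m)^a$ etc. and setting $x = q^m \in (0,1)$ it becomes a polynomial inequality in $x$ that holds on the relevant range (which is constrained by Lemma~\ref{p3}, giving $m|\log q|$ bounded, hence $x$ bounded away from $0$), analogous to the polynomial comparisons carried out in the proofs of Lemmas~\ref{p4} and~\ref{p5}.

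The main obstacle I anticipate is the last step: showing the relevant inequality ($\Delta < 0$ strictly, or the location of the turning point of $f$) holds uniformly over \emph{all} admissible $a, b, m$ with $b > a \ge 2$, not just for small fixed cases. In the calculus approach the delicate point is proving the turning point of $f$ is $\ge \sqrt{m_{i-1}m_{i+1}}$ using only the crude bound $q \ge 3^{-1/3}$ and the optimality-derived bound $m_i|\log q| \le \log 3 /3$ from Lemma~\ref{p3}; in the swap approach it is verifying a two-parameter family of polynomial inequalities in $x$. I would expect the cleanest presentation to reduce everything to: at an optimum, $f$ is convex-like on the admissible interval and its continuous minimizer is $\ge \sqrt{m_{i-1}m_{i+1}}$, so the best integer choice of $m_i$ cannot undercut the geometric mean — which is exactly $\pi_j \le \pi_{j+1}$. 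Iterating over all $j$ then gives $\pi_1 \le \cdots \le \pi_k$.
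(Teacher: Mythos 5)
Your second (swap) route is in spirit the paper's actual argument, but as written it points in the wrong direction. With the paper's convention \eqref{multipliers}, $\pi_j=m_{k-j+1}/m_{k-j+2}$, so setting $i=k-j+1$ gives $\pi_j=m_i/m_{i+1}$ (the ratio nearer the leaves) and $\pi_{j+1}=m_{i-1}/m_i$ (the ratio nearer the root). The violation $\pi_j>\pi_{j+1}$ is therefore $m_i^2>m_{i-1}m_{i+1}$, not $m_i^2<m_{i-1}m_{i+1}$ as you state: the lemma asserts that the ratios \emph{grow toward the first stage}, i.e.\ $m_i\le\sqrt{m_{i-1}m_{i+1}}$ (compare $m_{34}=(4\cdot 3^{k-1},3^{k-1},\dots,3)$). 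Consequently the goal of your calculus route, ``the optimal $m_i$ satisfies $m_i\ge\sqrt{m_{i-1}m_{i+1}}$,'' is the negation of the statement to be proved. Likewise, in your swap the ``original'' configuration (lower ratio $a$, upper ratio $b$ with $b>a$) is the one the lemma says is optimal, and the inequality $\Delta<0$ you aim for is false: the paper proves precisely that this $\Delta$ (its $D_k^{\text{inv}}-D_k^{\text{nat}}$ in \eqref{nat-inv}) is nonnegative. The algebra is salvageable by exchanging the roles of the two configurations, but followed literally your argument would establish the reverse ordering.

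The step you flag as the main obstacle --- uniformity over all admissible $a,b,m$ with $b>a\ge2$ --- is a real gap, and it is exactly where the content of the paper's proof lies. The sign of the swap difference is governed by $(\opi-\upi)(z-1)+\upi\,\opi\,(z^{1/\opi}-z^{1/\upi})$ with $z=q^{m_{i-1}}$, and to control it one needs two inputs you do not secure: the smaller multiplier must lie in $\{2,3,4\}$, and $z\ge 3^{-4/3}$. Both come from Lemma~\ref{p4} (which you never invoke; Lemma~\ref{p3} alone does not give them): the paper first treats the last two multipliers, where $\pi_k\in\{3,4\}$ forces $\upi\le4$ and \eqref{lowbou} gives $q^{m_1}\ge 3^{-4/3}$, and then inducts downward through the stages, using the ordering already established for the later multipliers to keep the smaller one in $\{2,3,4\}$ and $q^{m_{k-j+1}}\ge q^{m_1}\ge 3^{-4/3}$ at every step. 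This reduces the two-parameter family to three explicit one-parameter families in $(\opi,z)$, each checked to be nonnegative on $z\in[3^{-4/3},1]$ with minimum $0$ at $z=1$. Without that reduction there is no finite case analysis to fall back on, so the verification you defer cannot be waved through.
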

  \begin{proof}
  Let us first show that under the conditions of the theorem $\pi_{k-1}\le\pi_k$. 
  
  Define $\utpi=\pi_{k-1}\wedge \pi_k$ and $\otpi=\pi_{k-1}\vee\pi_k$, where $a\wedge b:=\min\{a,b\}$ and $a\vee b:=\max\{a,b\}$. Notice that by Lemma \ref{p4}, $\utpi\le 4$, hence $\utpi \in \{2,3,4\}$. Suppose that 
  $\utpi<\otpi$, as otherwise $\utpi=\otpi$ and $\pi_{k-1}=\pi_k$.
  
 We wish to compare $D_k^{\text{nat}}$, the cost associated with the sequence of multipliers
$(\pi_1, \ldots, \utpi, \otpi)$ (last two multipliers in increasing order) with $D_k^{\text{inv}}$, the cost associated with the
 sequence
  $(\pi_1, \ldots, \otpi, \utpi)$ (last two multipliers in decreasing order).

From \eqref{ncosto4}, we have 
\begin{align}
\label{nat-inv}
D_k^{\text{inv}}-D_k^{\text{nat}}
=  \frac{1}{\otpi m_3}   -  \frac{q^{\otpi m_3}}{m_3} 
- \frac{q^{m_1}}{m_3 \otpi} 
- \frac{1}{\utpi m_3 }   +  \frac{q^{\utpi m_3}}{m_3}    
+\frac{q^{m_1}}{m_3 \utpi}.
\end{align}
This expression is positive if and only if 
$
(\otpi - \utpi)(q^{m_1} - 1) + \utpi \,\otpi( q^{m_1/\otpi} - q^{m_1/\utpi})
$
is positive. 

Let us call $z = q^{m_1}$. We are interested in
  the sign of the expression
 $(\otpi- \utpi)(z - 1) + \utpi \,\otpi( z^{1/\otpi}\, - \,z^{1/\utpi})$    
 when $1 \geq z > 3^{-4/3} \approx 0.231$ and $\utpi=2, 3 \text{ or } 4$.

 If $\utpi = 2$ then, by Lemma \ref{p4}, $\pi_{k-1}=2$ and $\pi_k=\otpi$. 
 We will nonetheless compare $D_k^{\text{nat}}$ and $D_k^{\text{inv}}$ as the argument will be necessary for the study
 of the multipliers $\pi_j,\,j<k$. In this case we need to minimize
  $
 (\otpi - 2)(z - 1) + 2 \otpi ( z^{1/\otpi} - z^{1/2})
$   
 subject  to $z \in [0.231, 1]$ and $\otpi\ge 3$.         

If $\utpi = 3$, we have that 
  $\otpi \geq 4$. Therefore, the problem in two variables is:
$
\mbox{Minimize }  (\otpi - 3)(z - 1) + 3 \otpi( z^{1/\otpi} - z^{1/3})
$
subject to $z \in [0.231, 1]$ and $\otpi \ge 4$.

Finally, if $\utpi= 4$, the relevant optimization problem in two variables is:  
 $
\mbox{Minimize }  (\otpi - 4)(z - 1) + 4 \otpi ( z^{1/\otpi} - z^{1/4})
$
subject  to $z \in [0.231, 1]$ and $\otpi \ge 5$.
 
For $z\in [0.231,1]$ and values of $\otpi$ in the range considered in each of the problems, the minimum is achieved at $z = 1$, 
and at this value the objective function vanishes. This proves that the expression in \eqref{nat-inv} is non-negative, and hence $\pi_{k-1}\le \pi_k$.

The argument proceeds by induction. Suppose now that $\pi_j<\pi_{j+1}<\dots<\pi_k$. Then $\pi_j\in \{2,3,4\}$ and $q^{m_{k-j+1}}\ge q^{m_1}\ge 3^{-4/3}$. 
Let $\utpi_j=\pi_{j-1}\wedge \pi_j$ and $\otpi_j=\pi_{j-1}\vee\pi_j$. We may assume that $\utpi_j\neq \otpi_j$, otherwise there is nothing to prove. We wish to compare the following expressions
\begin{align*}
D_k^{\text{nat}}(j)&:=D_k\big({\small (m_1,\dots,m_{k-j}, \otpi_j\,\utpi_j m_{k-j+3},\utpi_jm_{k-j+3}, m_{k-j+3},\dots,m_k)},p\big),\\
D_k^{\text{inv}}(j)&:=D_k\big({\small (m_1,\dots,m_{k-j}, \utpi_j\,\otpi_j m_{k-j+3},\otpi_jm_{k-j+3}, m_{k-j+3},\dots,m_k)},p\big).
\end{align*}

A simple computation shows that $D_k^{\text{inv}}(j)-D_k^{\text{nat}}(j)$ has the same sign as 
\begin{align}
\label{upiopi}
\big(\otpi_j - \utpi_j\big)(z - 1) + \utpi_j\,\otpi_j\big( z^{1/\otpi_j} - z^{1/\utpi_j}\big),
\end{align}
with $z:=q^{m_{k-j+1}} \in [3^{-4/3},1]$ and $\utpi_j\in \{2,3,4\}$. The previous computations show that in the given range for $z$ and $\utpi_j$ \eqref{upiopi} is nonnegative,  $D_k^{\text{inv}}(j)-D_k^{\text{nat}}(j)\ge 0$, and therefore $\pi_{j-1}=\utpi_j$, $\pi_j=\otpi_j$.
  \end{proof}
   
\begin{Lemma}[Successive multipliers cannot be (2, 2)]
  \label{p9}
  Let  $k\ge 2$ and assume $(k,\um)$ is optimal for~$p$. Then, for all $j = 1, \dots, k-1$, $(\pi_j, \pi_{j+1}) \neq (2, 2)$.
\end{Lemma}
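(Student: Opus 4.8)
The plan is to argue by contradiction via a local stage-deletion move: two consecutive multipliers equal to $2$ govern two stages whose pool sizes differ by a factor of $4$, and collapsing them into a single stage of ratio $4$ strictly decreases the cost \eqref{ncosto4}.

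I would first dispose of the endpoint $j=k-1$: by Lemma~\ref{p4} the last multiplier satisfies $\pi_k\in\{3,4\}$, so $\pi_k\ne2$ and $(\pi_{k-1},\pi_k)\ne(2,2)$. It then suffices to treat $1\le j\le k-2$. Assume, for contradiction, that $(k,m)$ is optimal for $p$ with $\pi_j=\pi_{j+1}=2$ for some such $j$, and set $a:=k-j$, so that $2\le a\le k-1$. By the definition \eqref{multipliers} of the multipliers this means $m_a=2m_{a+1}$ and $m_{a+1}=2m_{a+2}$, where we use the convention $m_{k+1}:=1$ (which also covers the sub-case $a=k-1$, i.e.\ $j=1$); hence $m_a=4m_{a+2}$. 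Since $m_{a+2}$ divides $m_a$, deleting the $(a+1)$-th stage yields another nested strategy $m':=(m_1,\dots,m_a,m_{a+2},\dots,m_k)\in{\cal M}_{k-1}$, whose pool sizes are still $>1$ and strictly decreasing.

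Next I would compute $D_{k-1}(m',p)-D_k(m,p)$ from the telescoped formula \eqref{ncosto4}. Every summand indexed by a position outside $\{a,a+1\}$ occurs unchanged in both costs, so only the terms at position $a$ (in both strategies) and at position $a+1$ (only in $m$) remain after subtraction. Writing $z:=q^{m_{a+2}}$, so that $q^{m_a}=z^4$ and $q^{m_{a+1}}=z^2$, this gives
\[
D_{k-1}(m',p)-D_k(m,p)
=\frac{q^{m_a}}{m_{a+1}}-\frac{q^{m_a}}{m_{a+2}}-\frac{1}{m_{a+1}}+\frac{q^{m_{a+1}}}{m_{a+2}}
=-\frac{1}{2m_{a+2}}\,(z^2-1)^2 .
\]
By Lemma~\ref{p2} we have $q\ge 3^{-1/3}>0$, and $q<1$ because for $p=0$ the infimum in \eqref{optn} equals $0$ and is not attained; hence $0<z<1$ and $(z^2-1)^2>0$. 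Therefore $D_{k-1}(m',p)<D_k(m,p)$, contradicting the optimality of $(k,m)$, and the lemma follows.

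The argument is short and I do not expect a real obstacle; the only points needing attention are that the deleted sequence $m'$ is still an admissible nested strategy (ensured by $m_a=4m_{a+2}$), that the convention $m_{k+1}=1$ lets the boundary sub-case $j=1$ run verbatim, and that optimality of $(k,m)$ rules out the degenerate values of $p$ so that the final strict inequality $0<z<1$ is legitimate.
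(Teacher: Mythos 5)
Your proof is correct and follows essentially the same route as the paper: both delete the intermediate stage created by the two consecutive ratio-$2$ multipliers and show via \eqref{ncosto4} that the cost drops by $\tfrac{1}{2m_{a+2}}(1-z^2)^2>0$, contradicting optimality. Your explicit handling of the boundary case $z=1$ (i.e.\ $p=0$) is in fact slightly more careful than the paper's, which asserts positivity of the quadratic without noting that it vanishes at $q=1$.
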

\begin{proof}
Suppose there is $1\le j\le k-1$ such that $(\pi_j,\pi_{j+1})=(2,2)$. Then 
$\um=(m_1,\dots,m_k)=(m_1,\dots, m_{k-j-1}, 4m_{k-j+2},2m_{k-j+2}, m_{k-j+2},\dots,m_k)$. Consider the nested sequence obtained by removing the $(k-j+1)$-th stage,
$\um^j=(m_1,\dots, m_{k-j-1}, 4m_{k-j+2}, m_{k-j+2},\dots,m_k)$. Then
\begin{align*}
D_k(\um,p)-D_{k-1}(\um^j,p)&=\frac{1}{2m_{k-j+2}}-\frac{q^{2m_{k-j+2}}}{m_{k-j+2}}-\frac{q^{4m_{k-j+2}}}{2m_{k-j+2}}+\frac{q^{4m_{k-j+2}}}{m_{k-j+2}}\\[2mm]
&=\frac{1}{2m_{k-j+2}}-\frac{q^{2m_{k-j+2}}}{m_{k-j+2}}+\frac{q^{4m_{k-j+2}}}{2m_{k-j+2}}.
\end{align*}
This expression has the same sign as 
\begin{align*}
\frac{1}{2} -q^{2m_{k-j+2}}+\frac{q^{4m_{k-j+2}}}{2}.
\end{align*}
Writing $x=q^{2m_{k-j+2}}$, we see that we only need to study the sign of $\frac12-x+x^2$, $x\in[0,1]$. Since this function is positive over this interval, we conclude that replacing 
the strategy $(k, \um)$ by $(k-1,\um^j)$ reduces the cost, a contradiction to the optimality of $(k,\um)$.
\end{proof}

\begin{Lemma}[Only the first multiplier could be 2]
  \label{p10} 
  Let $k\ge 2$ and $(k,\um)$ be optimal for $p$. Then $\pi_j \neq  2$ for all  $j > 1$.
\end{Lemma}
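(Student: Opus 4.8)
The plan is to obtain the statement as an immediate consequence of two structural facts already established: the monotonicity of the multipliers (Lemma~\ref{p7}) and the prohibition of two consecutive multipliers equal to $2$ (Lemma~\ref{p9}). The first observation to record is that every multiplier satisfies $\pi_i \ge 2$: indeed $\pi_i = m_{k-i+1}/m_{k-i+2}$ is an integer (each pool size is a multiple of the next) and the pool sizes are strictly decreasing, while $\pi_1 = m_k/m_{k+1} = m_k \ge 2$ by the convention $m_{k+1}=1$ together with $m_k>1$.

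Next I would argue by contradiction. Suppose $\pi_j = 2$ for some index $j$ with $2 \le j \le k$. Since $(k,m)$ is optimal and $k\ge 2$, Lemma~\ref{p7} gives $\pi_1 \le \pi_2 \le \dots \le \pi_k$, so in particular $\pi_{j-1} \le \pi_j = 2$. Combined with $\pi_{j-1}\ge 2$ from the first paragraph, this forces $\pi_{j-1} = 2$, hence $(\pi_{j-1},\pi_j) = (2,2)$. Because $j \ge 2$ and $j \le k$, the index $j-1$ lies in $\{1,\dots,k-1\}$, which is exactly the range in which Lemma~\ref{p9} applies; this is the desired contradiction. Therefore $\pi_j \neq 2$ for all $j > 1$.

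I do not expect a genuine obstacle here, since the argument is a short combination of the preceding lemmas. The only points that require a moment's care are (i) noting that all multipliers are at least $2$, so that $\pi_{j-1}\le 2$ already pins it down to $\pi_{j-1}=2$, and (ii) verifying that the index shift keeps $j-1$ inside $\{1,\dots,k-1\}$, which is precisely why the hypothesis $j>1$ (together with $j\le k$) is used. If desired, one could equivalently iterate the monotonicity all the way down to conclude $\pi_1=\pi_2=2$ and invoke Lemma~\ref{p9} with index $1$, but the one-step version above is the cleanest.
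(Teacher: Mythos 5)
Your proposal is correct and follows essentially the same route as the paper: the paper's proof likewise assumes $\pi_j=2$ for some $j>1$, invokes the monotonicity of the multipliers (Lemma~\ref{p7}) to force $\pi_{j-1}=2$ (the paper states $\pi_1=\dots=\pi_{j-1}=2$), and then derives a contradiction from Lemma~\ref{p9}. Your added care about why $\pi_{j-1}\ge 2$ and why $j-1$ lies in the admissible index range is a harmless elaboration of the same argument.
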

\begin{proof}
    Suppose that $j > 1$ and $\pi_j = 2$. By Lemma \ref{p7}, $\pi_1 = \ldots= \pi_{j-1} = 2$. But by Lemma \ref{p9} this is impossible.
   Therefore, the existence of $j > 1$ with $\pi_j = 2$ leads to a contradiction.
  \end{proof}
  \begin{Lemma}[All but the first and last multipliers must be 3]
    \label{p11}
    Let $k \geq 3$ and $(k,\um)$ be optimal for~$p$. Then, $\pi_j = 3$ for all  $j = 2, \ldots, k-1$.
  \end{Lemma}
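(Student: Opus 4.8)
The plan is to read off the claim from the structural lemmas already in place, since together they force every intermediate multiplier to equal $3$. First I would note that, by the nesting condition in the definition of ${\cal M}_k$, each multiplier $\pi_j=m_{k-j+1}/m_{k-j+2}$ is an integer strictly larger than $1$, so $\pi_j\ge 2$. Lemma~\ref{p10} then gives, for the optimal strategy $(k,m)$, that $\pi_j\ne 2$ for every $j>1$; hence $\pi_j\ge 3$ for all $j\in\{2,\dots,k-1\}$. It remains only to establish the matching upper bound $\pi_j\le 3$ on this range of indices.

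For that I would argue by contradiction. Suppose $\pi_j\ge 4$ for some $j$ with $2\le j\le k-1$. By Lemma~\ref{p7} the multipliers of an optimal strategy form a non-decreasing sequence, so $\pi_{k-1}\ge\pi_j\ge 4$ and $\pi_k\ge\pi_{k-1}\ge 4$. But Lemma~\ref{p4} says $\pi_k\in\{3,4\}$, which forces $\pi_k=4$, and then $4\le\pi_{k-1}\le\pi_k=4$ gives $\pi_{k-1}=4$ as well. Thus the last two multipliers are both equal to $4$, contradicting Lemma~\ref{p5}. Therefore $\pi_j<4$, and combined with $\pi_j\ge 3$ this yields $\pi_j=3$ for every $j\in\{2,\dots,k-1\}$.

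I do not expect a genuine obstacle here: the statement is a direct consequence of Lemmas~\ref{p4}, \ref{p5}, \ref{p7} and \ref{p10}, with no new optimization or estimate required. The only care needed is bookkeeping with indices --- checking that $k\ge 3$ makes the index set $\{2,\dots,k-1\}$ nonempty, and that the lemmas invoked (which assume $k\ge 2$) apply. Everything analytic was already carried out in the proofs of those earlier lemmas.
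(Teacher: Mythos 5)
Your proof is correct and uses exactly the same ingredients as the paper's own argument (Lemmas~\ref{p4}, \ref{p5}, \ref{p7} and \ref{p10}); the paper derives $\pi_{k-1}=3$ directly and then propagates the bound downward via monotonicity, whereas you phrase the upper bound as a contradiction, but this is only a cosmetic difference.
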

  \begin{proof}
    By Lemma~\ref{p4}, $\pi_k=3$ or $4$.  Then by Lemma~\ref{p7}, $\pi_{k-1}\le 4$, and Lemma~\ref{p5} implies $\pi_{k-1} \le 3$. 
    By Lemma~\ref{p10}, $\pi_{k-1}\neq 2$. Then,
  $\pi_{k-1} = 3$, and by Lemma~\ref{p7}, $\pi_{j} \leq 3$ for all $j = 1, \ldots, k-2$. Therefore, by Lemma~\ref{p10}, 
  $\pi_j = 3$ for $j = 2, \ldots, k-2$. This completes the proof.
  \end{proof}
  The following result summarizes the information on the optimal strategy  collected so far, and together with Lemma \ref{p2}, they complete the proof of Theorem \ref{1of4}.
  \begin{Proposition}[The four possible optimal strategies]
    \label{p12}
     Let $(k,\um)$ be optimal for $p$.   If $k = 1$, then $\um = \upi = (3)$ or $\um= \upi = (4)$. Otherwise, 
\begin{align} \label{semifinal}
\upi = (2 \mbox{ or } 3, \; 3, \dots, 3, \; 3  \mbox{ or } 4).
\end{align}
  \end{Proposition}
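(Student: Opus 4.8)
The plan is to assemble Proposition~\ref{p12} directly from the structural lemmas \ref{p4}, \ref{p5}, \ref{p7}, \ref{p9}, \ref{p10} and \ref{p11}, splitting into the cases $k=1$, $k=2$ and $k\ge 3$. The only background fact I would invoke beyond those lemmas is that a nested strategy has $m_1>\dots>m_k>1$, so that every multiplier $\pi_j=m_{k-j+1}/m_{k-j+2}$ is an integer at least $2$; this is what prevents spurious values such as $1$ from occurring in~\eqref{semifinal}.

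First I would dispose of $k=1$: here $\pi=(\pi_1)=(m_1)$, and Lemma~\ref{p4} states that the last multiplier $\pi_k$ equals $3$ or $4$, so $m=\pi=(3)$ or $m=\pi=(4)$, as claimed.

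For $k\ge 2$ I would determine the entries of $\pi$ from right to left. Lemma~\ref{p4} again pins down $\pi_k\in\{3,4\}$. For the interior entries I would invoke Lemma~\ref{p11}, which gives $\pi_j=3$ for all $j=2,\dots,k-1$ when $k\ge 3$; when $k=2$ there are no interior entries, so the middle block $3,\dots,3$ in~\eqref{semifinal} is vacuous and nothing is needed. Finally, for $\pi_1$: if $k\ge 3$ then $\pi_2=3$ by the previous step, and monotonicity of the multipliers (Lemma~\ref{p7}) forces $\pi_1\le\pi_2=3$, hence $\pi_1\in\{2,3\}$; if $k=2$, monotonicity gives $\pi_1\le\pi_2$, so when $\pi_2=3$ we already have $\pi_1\le 3$, and when $\pi_2=4$ Lemma~\ref{p5} excludes $\pi_1=4$, so again $\pi_1\le 3$. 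Combining these observations yields~\eqref{semifinal}.

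Since every ingredient has already been proved, there is no genuine obstacle here — the proposition is essentially a bookkeeping statement collecting the lemmas. The only points that warrant a moment's care are (i) verifying that the $k=2$ case is consistent with the notation of~\eqref{semifinal}, the block $3,\dots,3$ being empty, and (ii) recalling that the value $\pi_1=2$ is genuinely permitted, because Lemma~\ref{p10} rules out the value $2$ only in positions $j>1$ and Lemma~\ref{p9} forbids only two consecutive $2$'s; hence no further argument should be sought to eliminate it, and it is in fact realised by the families $m_{23}$ and $m_{24}$ introduced earlier.
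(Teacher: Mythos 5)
Your proposal is correct and follows exactly the route the paper takes: its proof of Proposition~\ref{p12} is literally ``Follows from Lemmas \ref{p4}--\ref{p11}'', and you have simply made the bookkeeping explicit (including the $k=1$ and $k=2$ edge cases and the observation that $\pi_1=2$ is not excluded). Nothing is missing.
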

  \begin{proof}
  Follows from Lemmas \ref{p4}--\ref{p11}.
  \end{proof}

 Building upon the previous proposition, the following result establishes the number $k$ of pooled stages associated to an optimal strategy, which is expressed in terms of the probability $p$ and the particular form of the strategy. 
  \begin{Proposition}[Optimal length $k$]
    \label{p13} Let $p\le \rho_0$ and $(k,\um)$ be a strategy such that
    \begin{align}
      \label{ni+ni-}
      D_k(\um,p)\le D_{k-1}((m_2,\dots,m_k),p)\wedge D_{k+1}((m_0,\dots,m_k),p),
    \end{align}
for any multiple $m_0$ of $m_1$. Then,\vspace{2mm}

\noindent 1. if $\upi = (2, 3, \dots, 3)$, then
 \begin{align} \label{floor1}
k=k_{23} := \bigg\lfloor   \frac{\log(\log(3)) - \log(2)}{\log(3)} - \frac{ \log(\alogq)}{\log(3)}  + 1 \bigg\rfloor
=\bigg\lfloor   \log_3\big(\frac{1}{|\log_3(q)|}\big)-\log_3(2)+1\bigg\rfloor,
\end{align}  

\noindent 2. if $\upi = (2, 3, \dots, 3, 4)$, then 
 \begin{align} \label{smallsize1}
 k=k_{24} \in \Bigl(\log_3\bigl(\frac{1}{|\log_3(q)|}\bigl)-\log_3(8)+1,\;\; \log_3\bigl(\frac{1}{|\log_3(q)|}\bigl)-\log_3(8)+2+\log_3(\log_3(4))\Bigr] \cap \N,
 \end{align}

\noindent 3. if $\upi = (3, 3, \dots, 3)$, then
 \begin{align} \label{floor2}
k=k_3:= \bigg\lfloor   \log_3\big(\frac{1}{|\log_3(q)|}\big)\bigg\rfloor,
\end{align}  

\noindent 4. if $\upi = (3, 3, \dots, 3, 4)$, then
    \begin{align}
      \label{smallsize2}
k=k_{34}\in \left(\log_3\big(\frac{1}{|\log_3(q)|}\big)-\log_3(4),\;\; \log_3\big(\frac{1}{|\log_3(q)|}\big)-\log_3(4)+1+\log_3(\log_3(4))\right]\cap \N
    \end{align}
 \end{Proposition}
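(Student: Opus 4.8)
The plan is to treat each of the four cases by the same mechanism: the hypothesis \eqref{ni+ni-} says that $(k,m)$ cannot be improved either by deleting its first stage or by prepending a new first stage, so the characterization \eqref{10.10}--\eqref{33p} of Lemma \ref{p3} pins down $m_1$ (equivalently, since the intermediate multipliers are forced to be $3$ by Proposition \ref{p12}, it pins down $k$) up to rounding. Concretely, write $q=e^{\log q}$ and set $b:=|\log q|=|\log_3(q)|\log 3$. In each case the pool sizes are completely determined by $k$: for $\pi=(3,\dots,3)$ we have $m_1=3^k$, for $\pi=(2,3,\dots,3)$ we have $m_1=2\cdot 3^{k-1}$, for $\pi=(3,\dots,3,4)$ we have $m_1=4\cdot 3^{k-1}$, and for $\pi=(2,3,\dots,3,4)$ we have $m_1=8\cdot 3^{k-2}$. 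So the two inequalities coming from \eqref{ni+ni-} become explicit inequalities of the form ``$c\,3^{k}\,b$ lies between two thresholds,'' which can be solved for $k$ by taking $\log_3$.

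First I would dispatch the $(3,\dots,3)$ case, which is the cleanest. Deleting the first stage is governed by \eqref{10.10} with $\pi_k=3$: the step decreases cost iff $m_2 b = 3^{k-1} b \le \log(3)/3$, i.e. iff $3^{k-1}b\le \log 3/3$. Prepending a stage is governed by the ``in particular'' clause of Lemma \ref{p3}: non-improvability gives $m_1 b = 3^k b \ge \log(3)/3$. Combining, $3^{k-1} b \le \log 3/3 \le 3^k b$, i.e. $3^{k-1}\le \frac{\log 3}{3b} \le 3^{k}$, equivalently $3^{k}\le \frac{\log 3}{b} \le 3^{k+1}$; wait — more carefully, $\frac{\log 3}{3b} = \frac{1}{3|\log_3 q|}$, so $k-1 < \log_3\!\big(\frac{1}{|\log_3 q|}\big) \le k$ after taking $\log_3$ and accounting for strictness, which yields $k=k_3 = \lfloor \log_3(1/|\log_3 q|)\rfloor$ as claimed, modulo the usual care with the boundary case where equality holds (there one checks both $k$ and $k-1$ give the same cost, consistent with the definitions of $\lambda_k,\rho_k$). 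The computation $\log(3)/3$ appears because $\log(z)/z$ is maximized over integers $\ge 2$ at $z=3$, exactly as in Lemma \ref{p3}.

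Second I would handle the remaining three cases by the same two-sided squeeze, the only change being the constant $c$ relating $m_1$ to $3^k$ and — crucially for the $4$-cases — the fact that when $\pi_k=4$ the deletion criterion \eqref{10.10} uses $\log(\pi_k)/\pi_k = \log(4)/4$ rather than $\log(3)/3$, which is why the upper endpoints in \eqref{smallsize1} and \eqref{smallsize2} carry the extra $+\log_3(\log_3 4)$ term (that term is precisely $\log_3\!\big(\frac{\log 4/4}{\log 3/3}\big)$ up to the additive bookkeeping). For $\pi=(2,3,\dots,3)$: deletion of the first stage removes a multiplier $\pi_k=2$ and \eqref{10.10} gives improvement iff $m_2 b = 3^{k-1} b \le \log(2)/2$, while non-expandability gives $m_1 b = 2\cdot 3^{k-1} b \ge \log(3)/3$ — hmm, here the first-stage multiplier is $2$, so in fact I should re-derive the expansion bound directly from \eqref{34pi} rather than cite \eqref{33p}, since \eqref{33p} assumed the optimal first multiplier; the honest statement is that prepending a stage with some multiplier $\pi_0\ge 2$ fails to help, and optimizing over $\pi_0$ reproduces the ``$\log 3/3$'' threshold on $m_1 b$. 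Solving $3^{k-1}b \le \log(2)/2$ and $2\cdot 3^{k-1} b \ge \log(3)/3$ and taking $\log_3$ gives the two-sided bound in \eqref{floor1}, whose floor-expression follows after simplifying $\log_3(\log(3)/2) = \log_3\log_3(3)-\log_3 2 + \dots$; I would just verify the algebra matches the stated $\lfloor \log_3(1/|\log_3 q|)-\log_3 2+1\rfloor$. For the two cases ending in $4$ I expect to get only an \emph{interval} of length slightly more than $1$ for $k$ (hence ``$\cap\,\N$'' rather than a floor), because the relevant monotone quantity that must straddle a threshold is $m_2 b$ with $m_2 = 4\cdot 3^{k-2}$ or $3^{k-1}$, and the gap between the ``delete'' threshold $\log 4/4$ and the ``can't expand'' threshold forces $k$ into a window rather than a single value.

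The main obstacle I anticipate is bookkeeping at the boundaries and with strict-versus-weak inequalities: the hypothesis \eqref{ni+ni-} is a non-strict ``$\le$'', so at the transition points one genuinely has two values of $k$ achieving the minimum, and I must make sure the floor formulas and the half-open intervals in \eqref{floor1}--\eqref{smallsize2} are stated so as to be consistent with the definitions \eqref{lk}--\eqref{rk} of $\lambda_k,\rho_k$ and with \eqref{c889q}. The analytic content is entirely contained in Lemma \ref{p3}; the work here is just careful case-by-case substitution of the four forced pool-size patterns and clean manipulation of $\log_3$, so I do not expect any new inequality to be needed beyond ``$\log z/z$ is maximized at $z=3$ over integers,'' which is already in hand.
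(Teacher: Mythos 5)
Your overall mechanism is exactly the paper's (the paper only sketches it): the hypothesis \eqref{ni+ni-} combined with Lemma \ref{p3} squeezes $m_1|\log q|$ between $\log(3)/3$ (from non-expandability, \eqref{33p}) and $3\,\log(\pi_k)/\pi_k$ (from non-deletability, \eqref{10.10} together with $m_1=\pi_k m_2$), and taking $\log_3$ pins down $k$. Your treatment of cases 3 and 4 is correct and matches the paper, including the observation that the $\log_3(\log_3 4)$ slack in \eqref{smallsize2} comes from replacing the threshold $\log(3)/3$ by $\log(4)/4$ when $\pi_k=4$.

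However, your detailed treatment of case 1 (and, implicitly, case 2) contains a concrete error caused by misreading the multiplier convention \eqref{multipliers}: the vector $\pi=(\pi_1,\dots,\pi_k)$ is indexed from the \emph{last} stage to the first, so in $\pi=(2,3,\dots,3)$ the entry $2$ is $\pi_1=m_k$ (the final pool has size $2$), while the first-stage ratio is $\pi_k=m_1/m_2=3$. Consequently $m_2=2\cdot 3^{k-2}$ (not $3^{k-1}$) and the deletion criterion \eqref{10.10} uses $\log(\pi_k)/\pi_k=\log(3)/3$, not $\log(2)/2$. With your numbers the squeeze reads $3^{k-1}|\log q|\in[\log(3)/6,\;\log(2)/2]$, an interval of $\log_3$-width $1+\log_3(\log_3 2)\approx 0.58<1$, which contains no integer for a positive fraction of $q$'s and cannot produce the floor formula \eqref{floor1}. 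The correct squeeze is $\log(3)/3\le m_1|\log q|\le \log(3)$ with $m_1=2\cdot 3^{k-1}$, an interval of $\log_3$-width exactly $1$, which yields $k=\lfloor \log_3(1/|\log_3 q|)-\log_3 2+1\rfloor$; this is precisely the paper's one-line argument. Your side worry about whether \eqref{33p} applies when $\pi_1=2$ is moot: \eqref{33p} concerns prepending a stage, whose multiplier $m_0/m_1$ is optimized independently of the existing multipliers, so the $\log(3)/3$ lower bound on $m_1|\log q|$ holds in all four cases.
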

 \begin{proof}
   Let us prove (\ref{floor1}). 
   Lemma~\ref{p3} and the fact that $m_1=3 m_2$ implies \eqref{l3m} below, which in turn is equivalent to each line following it up to \eqref{thek}
   \begin{equation}
     \label{l3m}
     \log(3)     \; > \;  m_1 |\log(q)| \; \geq \; \log(3)/3
   \end{equation}
 \[
  \log(3)      \; > \;  2 \times 3^{k-1} \alogq  \;\geq \; \log(3)/3
\]  
 \[
  \log(\log(3))      \;> \; \log(2) + (k-1) \log(3) + \log(\alogq)  \;\geq \; \log(\log(3)/3)
\]   
 \[
   \log(\log(3)) - \log(2) - \log(\alogq)
   \;> \;
   (k-1) \log(3)
   \;\geq \;
   \log(\log(3)/3) - \log(2) - \log(\alogq)
\]   
\begin{equation} \label{thek}
  \frac{ \log(\log(3)) - \log(2)}{\log(3)} -  \frac{ \log(\alogq)}{\log(3)} + 1
  \;  > \;
  k
  \;  \geq \; \frac{\log(\log(3)) - \log(2)}{\log(3)} - \frac{ \log(\alogq)}{\log(3)},
\end{equation}   
which is \eqref{floor1}. We may prove (\ref{smallsize1}), 
 (\ref{floor2}), and \eqref{smallsize2} using a similar sequence of arguments. 
\end{proof}

We will later use the following a priori bounds for $p$ in terms of the optimal strategy.
\begin{Corollary}[A priori bounds for $p$]
  \label{c13}
  Let $(k,\um)$ be a nested strategy satisfying \eqref{ni+ni-}. Then,
\begin{align} 
  &\text{if $\upi = (3, 3, \dots, 3,3)$, then } \quad 1- (3^{- 1/9})^{1/3^{k-1}} \le p \le1- (3^{-1/3})^{1/3^{k-1}};
    \label{floor2q}\\
  & \text{if $\upi = (3, 3, \dots, 3, 4)$, then } \quad  1- (4^{-1/12})^{1/3^{k-1}}\le  p\le 1- (3^{-1/4})^{1/3^{k-1}}. \label{smallsize4q}
\end{align}  
\end{Corollary}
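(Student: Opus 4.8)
The plan is to turn the hypotheses into a two–sided estimate for $q^{m_1}$ and then read off $p=1-q$. Throughout I use $|\log q|=\log(1/q)$, so that for an integer $n\ge 1$ the inequality $n\,|\log q|\le c$ is the same as $q^{n}\ge e^{-c}$, i.e. as $p\le 1-(e^{-c})^{1/n}$, and likewise with the inequalities reversed.

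Two estimates are available for \emph{any} $(k,m)$ obeying \eqref{ni+ni-}. Since $(k,m)$ cannot be improved by deleting its first stage, \eqref{10.10} (with $\pi_k=m_1/m_2$) gives $m_2\,|\log q|\le\log(\pi_k)/\pi_k$, i.e. $q^{m_1}=(q^{m_2})^{\pi_k}\ge\pi_k^{-1}$. Since it cannot be improved by prepending a stage whose pool size is a multiple of $m_1$, the ``In particular'' part of Lemma~\ref{p3}, namely \eqref{33p}, gives $m_1\,|\log q|\ge\log(3)/3$, i.e. $q^{3m_1}\le\tfrac13$, hence $q^{m_1}\le 3^{-1/3}$. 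When $\pi=(3,\dots,3)$ we have $\pi_k=3$ and $m_1=3^{k}$, so these read $\tfrac13\le q^{3^{k}}\le 3^{-1/3}$, i.e. $3^{-1/3^{k}}\le q\le 3^{-1/3^{k+1}}$; using $3^{-1/3^{k}}=(3^{-1/3})^{1/3^{k-1}}$ and $3^{-1/3^{k+1}}=(3^{-1/9})^{1/3^{k-1}}$ this is exactly \eqref{floor2q}. So the first case drops out of Lemma~\ref{p3} alone.

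When $\pi=(3,\dots,3,4)$ we have $\pi_k=4$, $m_2=3^{k-1}$, $m_1=4\cdot3^{k-1}$, and the two generic estimates only give $\tfrac14\le q^{m_1}\le 3^{-1/3}$, which is \eqref{smallsize4q} with $3$ and $4$ interchanged and so is not yet enough. To sharpen it I would use that $(k,m)=(k,m_{34})$ is \emph{optimal} and compare its cost with two nearby members of the $m_{33}$ family via \eqref{ncosto4}. Put $x:=q^{m_2}$. The comparison $D_k(m_{34},p)\le D_k(m_{33},p)$, whose two sides differ only in the $i=1$ summand of \eqref{ncosto4} (pool $4\cdot3^{k-1}$ versus $3^{k}$), reduces to $x^{3}(1-x)\le\tfrac1{12}$; and $D_k(m_{34},p)\le D_{k+1}(m_{33},p)$ reduces, by a similar computation, to $x^{3}-x^{4}+\tfrac13x^{9}\le\tfrac7{36}$. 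The generic bound $q^{m_1}\ge\pi_k^{-1}=\tfrac14$ means $x\ge4^{-1/4}$, which exceeds the smaller root in $(0,1)$ of $x^{3}(1-x)=\tfrac1{12}$, so the first reduced inequality forces $x\ge x_{+}$ (its larger root), and a numerical check gives $x_{+}>3^{-1/4}$, i.e. $(3^{-1/4})^{3}(1-3^{-1/4})>\tfrac1{12}$. The second reduced inequality forces $x\le x_{\rho}$, the unique root in $(0,1)$ of $x^{3}-x^{4}+\tfrac13x^{9}=\tfrac7{36}$ (its left side being strictly increasing there), and a numerical check gives $x_{\rho}<4^{-1/12}$, i.e. $(4^{-1/12})^{3}-(4^{-1/12})^{4}+\tfrac13(4^{-1/12})^{9}>\tfrac7{36}$. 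Hence $3^{-1/4}<x<4^{-1/12}$, so $3^{-1}<q^{m_1}=x^{4}<4^{-1/3}$; taking $m_1$-th roots and using $m_1=4\cdot3^{k-1}$ this becomes $(3^{-1/4})^{1/3^{k-1}}<q<(4^{-1/12})^{1/3^{k-1}}$, i.e. \eqref{smallsize4q}.

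The first case is immediate from the two one–step bounds of Lemma~\ref{p3}; the only real obstacle is the second, where those bounds are a hair too weak (they give \eqref{smallsize4q} with $3\leftrightarrow4$ swapped) and one must invoke the comparison of $m_{34}$ with the neighbouring $m_{33}$ strategies --- i.e. use that $(k,m)$ is genuinely optimal, not merely locally stable --- and then verify the two explicit numerical inequalities for the roots of the quartic and the degree–nine polynomial. Everything else is routine exponentiation.
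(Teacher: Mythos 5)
Your treatment of the first case is exactly the paper's: the corollary is proved there in one line by citing Proposition~\ref{p13}, whose own proof is the pair of one-step bounds in Lemma~\ref{p3}, and for $\pi=(3,\dots,3)$ these give $\tfrac13\le q^{3^k}\le 3^{-1/3}$, which is \eqref{floor2q}.

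For the second case you have put your finger on a real problem. The hypothesis \eqref{ni+ni-} together with Lemma~\ref{p3} yields only $m_2|\log q|\le \log(4)/4$ and $m_1|\log q|\ge\log(3)/3$, i.e. $\tfrac14\le q^{m_1}\le 3^{-1/3}$, and this is precisely what \eqref{smallsize2} encodes (unwinding \eqref{smallsize2} gives $3^{k-1}|\log q|\in(\log(3)/12,\,\log(4)/4]$, not the interval $[\log(4)/12,\,\log(3)/4]$ asserted by \eqref{smallsize4q}). So the paper's one-line proof does not deliver \eqref{smallsize4q}; indeed \eqref{smallsize4q} is false under \eqref{ni+ni-} alone: the strategy $(1,(4))$ at $q=0.72$ satisfies \eqref{ni+ni-} (one checks $q^4\ge\tfrac14$ and $q^{4j}\le 1/j$ for all $j\ge2$), yet $p=0.28>1-3^{-1/4}\approx 0.24$. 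Your repair --- adding the comparisons $D_k(m_{34},p)\le D_k(m_{33},p)$ and $D_k(m_{34},p)\le D_{k+1}(m_{33},p)$, which reduce to $x^3(1-x)\le\tfrac1{12}$ and $x^3-x^4+\tfrac13x^9\le\tfrac7{36}$ with $x=q^{3^{k-1}}$ --- is correct; these are exactly the functions $F$ and $G$ of Lemma~\ref{dan12}, your root-location and monotonicity arguments are sound, and both numerical verifications hold ($0.4387\times 0.2402\approx 0.1054>\tfrac1{12}$, and $0.70711-0.62996+0.11785\approx 0.19500>\tfrac7{36}\approx 0.19444$, the latter by a thin margin). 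The price, which you state honestly, is that the hypothesis must be strengthened from \eqref{ni+ni-} to optimality, or at least to realizing the minimum in \eqref{c88q} over both families; since that is the only setting in which the corollary is actually invoked (in the proof of Lemma~\ref{laro1}), your version is the statement the paper needs, and the corollary as printed should either carry that stronger hypothesis or weaken its second display to $\tfrac14\le q^{m_1}\le 3^{-1/3}$.
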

\begin{proof}
The statements follow from \eqref{floor2} and \eqref{smallsize2}, respectively.
\end{proof}

\subsection{Conjectured optimal strategies}
\label{conjectured}
We conjecture that there are only two families of optimal strategies. 
\begin{Conjecture}[Conjectured optimal strategy]
   \label{c8}
   If $p\ge 1-3^{-1/3}$, then the optimal strategy is to test all individuals (no pooling). If $p\le 1-3^{-1/3}$,  then there is a $k=k(p)\ge1$ and a strategy~$(k,\um)$ optimal for~$p$ satisfying
   \begin{align}
\label{c88}
     (k,\um) &\in\bigl\{(k,\um_{33}),(k,\um_{34})\bigr\}.
\end{align}
\end{Conjecture}
In Conjecture \ref{p14} below we rephrase this statement and corroborate that it holds for all $p>2^{-51}$.  

The transition between the strategies in \eqref{c88} occurs at points $\lambda_k$ and $\rho_k$, where $\rho_0:= 1-3^{-1/3}$ is the solution of $D_1(m_{33},p) = 1$, and for  $k\ge 1$, 
 \begin{align}
     \lambda_k&:= \text{ solution $p$ in $[0,\rho_{k-1})$ of } D_k(\um_{33},p)= D_k(\um_{34},p);\label{lk}\\[2mm]
    \rho_{k}&:= \text{ solution $p$ in $[0,\lambda_k)$ of } D_{k+1}(\um_{33},p)= D_{k}(\um_{34},p).\label{rk}
 \end{align}
 The solution of each of these equations is unique in the corresponding interval. In Lemma \ref{laro1} we show that for each $p\le \rho_0$:
 \begin{align}
    \label{c889q}
   \min_{j,\ell\ge1}(D_j(\um_{33},p)\wedge D_\ell(\um_{34},p)) \text{ is realized by }
    \begin{cases}
      (k,\um_{33})& \text{if } \lambda_{k}\le p \le \rho_{k-1},\\
       (k,\um_{34})& \text{if } \rho_{k}\le p \le \lambda_k.
  \end{cases}
 \end{align}
 where we recall that $a\wedge b$ denotes $\min\{a,b\}$.
The first transition points and the cost of the conjectured optimal strategy as a function of $p$ in log-log scale are shown in Fig.~\ref{costo-conjeturado}. 

{ \noindent
\begin{minipage}{.4\linewidth}
  \begin{center}
\begin{tabular}{|c|l|l|}
\hline 
 $k$ &   $\lambda_k\approx$& $\rho_{k-1}\approx$   \\
\hline
 1 &   0.1239 & 0.3066   \\
 \hline
 2 &  0.0431 & 0.1098  \\
\hline
3 &  0.0145 & 0.0380 \\
\hline
4 &    0.0048 & 0.0128   \\
\hline
5 & 0.0016 &  0.0043  \\
\hline
6 &  0.0005 &0.0014  \\
\hline
\end{tabular}
\end{center}
\end{minipage}%
\begin{minipage}{.6\linewidth}
	 \includegraphics[height=7cm, trim={0mm 0mm 0 0mm},clip] 
              {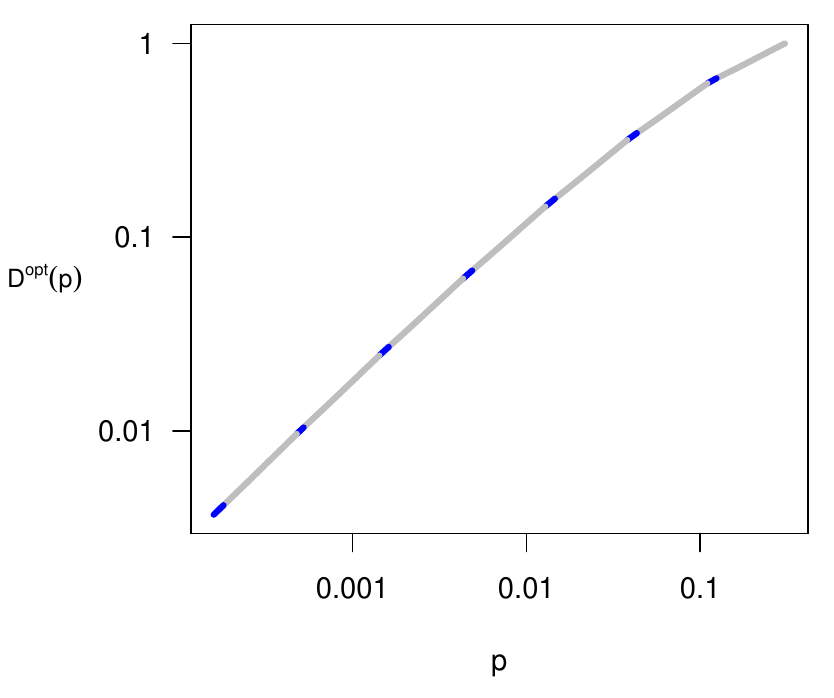}
\end{minipage}%
 \captionof{figure}{Left: First transition points $\lambda_k$ and $\rho_k$ defined in \eqref{lk} and \eqref{rk}. Right: Cost $\Dopt(p)$ of the conjectured optimal strategy as function of $p$ in log-log scale. Short (blue) segments correspond to $\um_{34}$ and long (grey) segments to $\um_{33}$.
                \label{costo-conjeturado}\\}

}

We have checked that for a large range of values of $p$ the optimal strategy has $\pi_1=3$:
 \begin{Conjecture}[Corroborated for $p>2^{-51}$]
    \label{p14}
Let $p\in(2^{-51},1-3^{-1/3})$ and $(k,\um)$ be optimal for~$p$. Then, if $k = 1$, $\um=\upi = (3)$ or $\um=\upi =(4)$. Otherwise, 
$
\upi = (3, 3, \ldots, 3, 3 \mbox{ or } 4). 
$
\end{Conjecture}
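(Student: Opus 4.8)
\textbf{Proof plan for Conjecture~\ref{p14}.}
The conjecture asks us to exclude, for $p$ in the stated range, the two optimal-strategy shapes whose last pool has size $2$, and the plan is to turn this into a finite, rigorously certified numerical comparison. By Proposition~\ref{p12} there is nothing to do when $k=1$, since then $m=\pi=(3)$ or $(4)$; and when $k\ge2$ an optimal $(k,m)$ must be one of $(k,m_{23})$, $(k,m_{24})$, $(k,m_{33})$, $(k,m_{34})$, the first two having $\pi_1=2$ and the last two $\pi_1=3$. Since within each of these four families the cost is unimodal in the number of pooled stages (this follows from the marginal analysis of \S\ref{optimization}), the minimum cost over each family is attained, and $\Dopt(p)=A(p)\wedge B(p)$ with $A(p):=\min_{k}D_k(m_{33},p)\wedge\min_{k}D_k(m_{34},p)$ and $B(p):=\min_{k}D_k(m_{23},p)\wedge\min_{k}D_k(m_{24},p)$. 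The conjecture on $(2^{-51},\,\rho_0)$ is therefore equivalent to the single inequality $A(p)\le B(p)$ on that interval, with equality possible only at the isolated transition points $\lambda_k,\rho_k$.

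Next I would reduce $A(p)\le B(p)$ to finitely many comparisons of explicit elementary functions. Proposition~\ref{p13} identifies the optimizing number of stages in each family as a step function of $p$ (exactly for $m_{23}$ and $m_{33}$, up to a set of at most two candidates for $m_{24}$ and $m_{34}$), and Corollary~\ref{c13}, together with its analogues for the other two families, gives matching a priori $p$-windows for each admissible $k$. Over $(2^{-51},\rho_0)$ the relevant values of $k$ run through $\{1,\dots,K\}$ with $K=k_3(2^{-51})\approx 32$, so the finitely many window endpoints partition $(2^{-51},\rho_0)$ into $O(10^2)$ sub-intervals. On each such sub-interval $I$, every family contributes a fixed strategy vector $m^I$ (or a pair of them), and by \eqref{ncosto4} its cost $D_{|m^I|}(m^I,p)$ is an explicit $\Q$-linear combination of the functions $q^{3^j}$ and $q^{4\cdot 3^j}$ with $q=1-p$. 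Thus on each $I$ one must certify, for a handful of pairs $(f,g)$ of such functions, that $g-f\ge 0$ on $\overline I$, ordering the sub-intervals so that at every shared endpoint the competing strategies on the two sides coincide and no jump occurs.

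Each of these sign checks I would carry out with validated high-precision arithmetic: evaluate $\Delta:=g-f$ and $\Delta'$ with certified error bars (ball or interval arithmetic) at the endpoints of $I$, and propagate the sign across all of $I$ using a rigorous bound on $\Delta'$, or, where it is available, monotonicity or a curvature estimate — both easy to obtain because $\Delta$ is a short combination of powers $q^{n}$ — subdividing $I$ further wherever the certified bars do not yet resolve the sign. Since $p$ descends only to $\sim 2^{-51}$ and the strategy vectors have length at most $K\approx 32$, every exponent $n$ that occurs satisfies $q^{n}\in(c,1-c)$ for a fixed $c>0$: the smallest pools give $q^{3}$ close to $1$, the largest give $q^{3^{k}}$ with $3^{k}|\log q|$ of order $1$. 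Hence there is no loss of significance beyond what the ambient precision already controls, and the procedure terminates.

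The hard part is purely quantitative. Near the lower end of the range the optimal cost $A(p)$ is itself only of order $p\log(1/p)$ (Theorem~\ref{thm}), i.e.\ as small as $\sim 2^{-51}$, and the margin $B(p)-A(p)$ is a further exponentially small fraction of it; a naive floating-point evaluation of $\Delta$ then returns pure rounding noise, so the verification genuinely needs multiple-precision arithmetic with rigorous error control, plus careful bookkeeping at the transition points and at the boundaries where the optimizing $k$ of a family changes and the pair of compared expressions switches. This is exactly why the statement is asserted only down to $p=2^{-51}$: for smaller $p$ the same comparison, while still a finite computation, becomes steadily more delicate and was not carried out. An unconditional statement valid for all $p\in(0,\rho_0)$ — or even a fully formalized proof in place of a carefully controlled numerical one on $(2^{-51},\rho_0)$ — would instead require an analytic comparison of the $p\to 0$ asymptotics of the four cost functions, with an error term uniform in $k$; producing that argument is the real gap this conjecture leaves open.
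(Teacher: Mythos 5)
Your proposal follows essentially the same route as the paper's own corroboration: reduce via Proposition~\ref{p12} to the four candidate shapes, use Proposition~\ref{p13} (and the a priori windows of Corollary~\ref{c13}) to pin down the optimizing $k$ in each family as a step function of $p$, and then certify $\min\{D_{33},D_{34}\}<\min\{D_{23},D_{24}\}$ on $(2^{-51},\rho_0)$ by a finite, multiple-precision numerical comparison --- exactly the quantity $\Phi(p)<0$ the authors check. Your extra detail on interval/ball arithmetic and sign propagation is a reasonable elaboration of what the paper calls ``careful floating point analysis,'' and your closing remark correctly identifies why the statement remains a conjecture below $2^{-51}$.
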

\begin{proof}[Corroboration]
  Given $q \in [3^{-1/3}, 1)$ we proceed as follows:
\begin{enumerate}
\item Find $k = k_{23}(p)$ using (\ref{floor1}) and define $D_{23}(p)$ as the cost of using the strategy defined by $k = k_{23}(p)$, $\pi_1 = 2$ and $\pi_k= 3$. 
\item If the right-hand side of (\ref{smallsize1}) is nonempty, compute $k_{24}(p)$ as the unique integer that satisfies (\ref{smallsize1}) and define  
  $D_{24}(p)$ as the cost of using the strategy defined by $k = k_{24}(p)$, $\pi_1 = 2$ and $\pi_k = 4$. Otherwise, define $D_{24}(p) = \infty$. 
\item Proceed as in the previous steps and use \eqref{floor2} and \eqref{smallsize2} to determine, respectively, $k_3(p)$ and $k_{34}(p)$, if the latter is well defined. Define $D_{33}(p)$ and $D_{34}(p)$, respectively, as the costs of using the strategy defined by $k=k_3(p)$, $\pi_1=3$ and $\pi_k =3$, and the one defined by $k=k_{34}(p)$, $\pi_1=3$ and $\pi_k =4$, when $k_{34}(p)$ exists.
\item Define $\Phi(p) = \min\{D_{33}(p), D_{34}(p)\} - \min\{D_{23}(p), D_{24}(p)\}$.
\end{enumerate}
Given $p$ in the allowed domain, if  $\Phi(p) <0$ the strategies with $\pi_1 = 3$ are better than the strategies with $\pi_1=2$. 
  The property $\Phi(p) < 0$ has been corroborated for $p \geq 2^{-51}$ using multiple precision computations and 
  careful floating point analysis for the smallest values of $p$. Walter Mascarenhas showed that the computations for p = $2^{-51}$ may be processed without rounding errors confirming the conjecture that 
$\min\{D_{33}, D_{34}\} < \min\{D_{23}, D_{24}\}$ for this value of $p$. The result follows from Proposition \ref{p12}.
\end{proof}

\begin{Remark}\rm
  By Proposition~1 the conjecture reduces to saying that, at an optimal strategy, we have that
  $\pi_1 \neq 2$, which is equivalent to saying that  
  $\pi_1 = 3$ or $\pi_1 = 4$. As a consequence, the conjecture is that $\Phi(p) < 0$ for all $p$ smaller than
  $1-3^{-1/3}$. The ``corroboration" for $p \geq 2^{-51}$ is as follows: For $p = 2^{-51}$ floating point 
  calculations that lead to the computation of $\Phi(p)$ can be performed without errors thanks to a careful 
   organization of floating point computations provided by Prof. W. Mascarenhas. In this way, $\Phi(2^{-51}) < 0$
  is exactly verified. On the other hand, $\Phi(p)$ decreases with respect to $p$, so that $\Phi(p) < 0$ follows
  for $p > 2^{-51}$. Moreover, computations of $\Phi(p)$ and consequent verification of $\Phi(p)<0$ can be 
  verified using resources of Interval Computations. 
\end{Remark}

\begin{Lemma}[A priori transition points]
  \label{dan12}
Let $k\ge 1$, $\lambda_k,\rho_k$ as defined in \eqref{lk}-\eqref{rk} and $p<\rho_{k-1}$. Then,
    \begin{align}
\text{\rm sign}\bigl(D_k(\um_{33},p) -D_k(\um_{34},p)\bigr)& = \text{\rm sign}(\lambda_k-p), \label{dan13}\\[2mm]
\text{\rm sign}\bigl(D_k(\um_{34},p) -D_{k+1}(\um_{33},p)\bigr)& = \text{\rm sign}(\rho_k-p)
      .\label{dan14}
    \end{align}
Define the functions $F,G:(0,1)\to\R$ by
    \begin{align}
      \label{FG7}
      F(a)&:= \sfrac1{12} -e^{-3a} +  e^{-4a},  \qquad G(a):= - \sfrac7{36}
    - e^{-4a}   + \sfrac13 e^{-9a}   + e^{-3a}.
    \end{align}
    Let $\alpha_1$ be the smallest root of $F$ and $\beta$ the only root of $G$. Then
    \begin{align}
      \label{laroex}
      \lambda_k =   1-e^{-\alpha_1/3^{k-1}} ,\qquad \rho_k =  1-e^{-\beta/3^{k-1}}.
    \end{align}
\end{Lemma}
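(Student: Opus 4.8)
The plan is to prove the three claims of Lemma~\ref{dan12} in order, exploiting the scaling structure of the costs $D_k(m_{33},p)$ and $D_k(m_{34},p)$ under the substitution $a = |\log q| = -\log(1-p)$.

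\textbf{Step 1: Establishing the sign identities \eqref{dan13} and \eqref{dan14}.} Using the closed form \eqref{ncosto4} together with the explicit descriptions \eqref{km23}, I would write down $D_k(m_{33},p)$ and $D_k(m_{34},p)$ as explicit finite sums. Setting $t := q^{3^{k-1}}$, each pool size $m_i$ in $m_{33}$ is $3^{k-i+1}$, so $q^{m_i} = t^{3^{\,?}}$ and the difference $D_k(m_{33},p) - D_k(m_{34},p)$ becomes a fixed rational expression in $t$ (independent of $k$ except through the substitution $t = q^{3^{k-1}}$); in fact, since $m_{33}$ and $m_{34}$ agree in all stages except the first two, the difference telescopes down to a short expression involving only $t$, $t^3$, $t^4$. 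I then need to check that this expression, as a function of $t\in(0,1)$, is strictly monotone and has a unique zero; the location of that zero determines $\lambda_k$. Since $p \mapsto t = q^{3^{k-1}}$ is a strictly decreasing bijection from $(0,\rho_{k-1})$ onto an interval, the sign of the difference in $p$ is governed by the sign relative to $\lambda_k$. The same approach handles $D_k(m_{34},p) - D_{k+1}(m_{33},p)$ for \eqref{dan14}: extending $m_{33}$ by one stage multiplies the relevant pool sizes, and again the difference reduces to a fixed expression in $t = q^{3^{k-1}}$.

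\textbf{Step 2: Identifying the functions $F$ and $G$.} Rather than working in the variable $t$, I would substitute $a = |\log q|$ and rescale by writing everything in terms of $\alpha := 3^{k-1} a$, so that $q^{3^{k-1}} = e^{-\alpha}$, $q^{m_i}= e^{-\alpha \cdot 3^{j}}$ for appropriate $j$. With this change of variables the difference $D_k(m_{33},p) - D_k(m_{34},p)$, after clearing a common positive factor, should equal exactly $F(\alpha)$ as defined in \eqref{FG7} — the terms $\tfrac1{12}$, $-e^{-3a}$, $e^{-4a}$ arising from the $\tfrac1{m_i}$ and $\tfrac{q^{m_i}}{m_{i+1}}$ contributions that do not cancel between the two strategies (the pool sizes $4\cdot 3^{k-1}$ and $3^k$ versus $3^{k-1}$). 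Similarly, the difference $D_k(m_{34},p) - D_{k+1}(m_{33},p)$ should reduce to $G(\alpha)$. So the bulk of this step is a careful but routine bookkeeping computation to match the coefficients $\tfrac1{12}$, $-\tfrac{7}{36}$, etc., against the sums in \eqref{ncosto4}. Once matched, the scaling $a = \alpha/3^{k-1}$, i.e. $q = e^{-\alpha/3^{k-1}}$, i.e. $p = 1 - e^{-\alpha/3^{k-1}}$, immediately yields \eqref{laroex} from the definitions \eqref{lk}, \eqref{rk}: $\lambda_k$ corresponds to the root $\alpha_1$ of $F$, and $\rho_k$ to the root $\beta$ of $G$.

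\textbf{Step 3: Uniqueness of the relevant roots.} The remaining work is analytic: I must verify that $F$ has a well-defined \emph{smallest} root $\alpha_1$ in $(0,1)$ and that $G$ has a \emph{unique} root $\beta$, and that these are the roots that control the transitions on the interval $p < \rho_{k-1}$ (equivalently, on the corresponding $\alpha$-interval). For $G$, I would compute $G(0) = -\tfrac{7}{36} - 1 + \tfrac13 + 1 = \tfrac{1}{36} - \tfrac{... }{...}$ — actually $G(0) = -7/36 - 1 + 1/3 + 1 = -7/36 + 1/3 = 5/36 > 0$, while $G(a) \to -7/36 < 0$ as $a \to \infty$, so at least one root exists; uniqueness follows by showing $G' < 0$ on the relevant range (or by a convexity/sign-of-derivative argument on $e^{-a}$). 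For $F$, note $F(0) = \tfrac1{12} - 1 + 1 = \tfrac1{12} > 0$ and $F(a)\to \tfrac1{12}>0$ as $a\to\infty$, so $F$ need not change sign at all unless it dips below zero in between; one checks $F$ has a minimum where $3e^{-3a} = 4e^{-4a}$, i.e. $e^{a} = 4/3$, $a = \log(4/3) \approx 0.2877$, and evaluates $F$ there to confirm it is negative, giving exactly two roots, of which $\alpha_1$ is the smaller. I expect \textbf{this last analytic step to be the main obstacle}, not because it is deep but because it requires care: one must confirm the correct root is selected on the correct $p$-interval (the constraint $p < \rho_{k-1}$ translates to an upper bound on $\alpha$, and I need to be sure $\alpha_1$ and $\beta$ fall in the admissible window and that the sign of the cost difference is the claimed one — not its negative). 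A clean way to nail this down is to evaluate numerically at one point in each regime and match against the transition-point table in Fig.~\ref{costo-conjeturado}, which pins down the orientation of every sign unambiguously.
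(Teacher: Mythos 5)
Your proposal follows essentially the same route as the paper: write both cost differences via \eqref{ncosto4}, observe that all stages past the first one (resp.\ two) cancel, and reduce to $D_k(m_{33},p)-D_k(m_{34},p)=3^{-(k-1)}F(a)$ and $D_k(m_{34},p)-D_{k+1}(m_{33},p)=3^{-(k-1)}G(a)$ with $a=3^{k-1}|\log q|$, then analyze the roots of $F$ and $G$; your computations of $G(0)=5/36$, the minimum of $F$ at $a=\log(4/3)$, and the resulting two roots $\alpha_1<\alpha_2$ of $F$ all match the paper. The one step you flag but do not execute --- ruling out $a\ge\alpha_2$ so that only the smaller root $\alpha_1$ is relevant --- is precisely where the paper works hardest: it first settles \eqref{dan14} for all $k$ (possible at once since $G$ is strictly decreasing with a single root), and then proves \eqref{dan13} by induction on $k$, using \eqref{dan14} together with Lemma~\ref{p3} to force $a\le(\log 3)/3\approx 0.3662<\alpha_2$ on the admissible range. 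Your alternative of deducing the upper bound on $a$ directly from $p<\rho_{k-1}=1-e^{-\beta/3^{k-2}}$ (giving $a<3\beta\approx 0.349<\alpha_2$) also closes the gap and is arguably more direct, but it must be stated as a two-stage argument ($G$ first, then $F$) rather than left to numerical spot checks, since the definitions \eqref{lk}--\eqref{rk} are interleaved in $k$.
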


\begin{proof} Denoting   $a:= 3^{k-1} |\log (1-p)|$ we have 
\begin{align}
    D_k(\um_{33},p) -D_k(\um_{34},p)
    &= \sfrac1{3^{k-1}}F(a);\label{dan17}\\[2mm]
    D_k(\um_{34},p) -D_{k+1}(\um_{33},p)
    &=  \textstyle{\frac1{3^{k-1}}}\,G(a).   \label{dan16}
  \end{align}
  Elementary computations show that $G$ is continuous, strictly decreasing in $[0,1]$ and $\beta\approx 0.1164$ is its unique root in $(0,1)$. Hence, $G(a)$ is positive for $a<\beta$ and negative for $a>\beta$, which in turn implies that \eqref{dan14} holds for all $k\ge1$ with $\rho_k =  1-e^{-\beta/3^{k-1}}$.
  
Similarly, $F$ is continuous, has roots at $\alpha_1\approx 0.1323$ and $\alpha_2\approx 0.5343$, $F(a)$ is negative for $a$ in $(\alpha_1,\alpha_2)$ and positive for $a<\alpha_1$ and $a>\alpha_2$.  If $p<\rho_0=1-3^{-1/3}$, recalling the definition of $a$, we have that for $k=1$, $a = |\log(1-p)| < |\log 3^{-1/3}| \approx 0.3662 <\alpha_2$. This implies $F(a)<0$ for $a\in (\alpha_1,  0.3662)$, which implies \eqref{dan13} for $k=1$ and $\lambda_1 = 1-e^{-\alpha_1}$. Now we proceed by induction. Assume \eqref{dan13} holds for all $j\le k-1$ and take $k\ge 2$ and $p<\rho_{k-1}$. By \eqref{dan14}, the cost of $(k,\um_{33})$ is smaller than the cost of $(k-1,\um_{34})$, which is smaller than the cost of $(k-1,\um_{33})$, by the inductive hypothesis. This and  \eqref{10.1} imply $a=3^{k-1}|\log(1-p)| \le(\log 3)/3\approx 0.3662<\alpha_2$. Hence, if $\alpha_1<a<(\log 3)/3$ then $F(a)<0$, while if $a<\alpha_1$ then $F(a)>0$, and solving for $p$ we get that \eqref{dan13} holds for $k$ with $\lambda_k =  1-e^{-\alpha_1/3^{k-1}}$.
\end{proof}

\begin{Lemma}[Transition points]
  \label{laro1}
  For each $p\le \rho_0$ we have
\begin{align}
    \label{c88q}
  \min_{j,\ell\ge1}\bigl(D_j(\um_{33},p) \wedge D_\ell(\um_{34},p)\bigr) \,=\, 
   \begin{cases}
   D_k(\um_{33},p)& \text{if } \lambda_{k}\le p \le \rho_{k-1},\\
       D_k(\um_{34},p)& \text{if } \rho_{k}\le p \le \lambda_k.
  \end{cases}
\end{align}
\end{Lemma}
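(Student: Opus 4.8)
The plan rests on the interlacing of the transition points. By Lemma~\ref{dan12} and \eqref{laroex}, $\lambda_k=1-e^{-\alpha_1/3^{k-1}}$ and $\rho_k=1-e^{-\beta/3^{k-1}}$ with $\beta\approx0.116<\alpha_1\approx0.132<3\beta$, so $\rho_k<\lambda_k<\rho_{k-1}$ for every $k\ge1$ and the intervals $[\lambda_k,\rho_{k-1}]$, $[\rho_k,\lambda_k]$, $k\ge1$, partition $(0,\rho_0]$. It thus suffices to fix $k$ and show $D_k(m_{33},p)\le D_j(m_{33},p)\wedge D_\ell(m_{34},p)$ for all $j,\ell\ge1$ when $p\in[\lambda_k,\rho_{k-1}]$, and $D_k(m_{34},p)\le D_j(m_{33},p)\wedge D_\ell(m_{34},p)$ for all $j,\ell\ge1$ when $p\in[\rho_k,\lambda_k]$. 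I would reduce the two unbounded quantifiers to comparisons with the immediately longer strategy by proving unimodality of each family in the number of stages, and then close the remaining few cross-comparisons with \eqref{dan13}--\eqref{dan14}.

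First, unimodality. Deleting the first stage of $(j+1,m_{33})$ gives $(j,m_{33})$, so \eqref{34pi} yields $D_{j+1}(m_{33},p)-D_j(m_{33},p)=3^{-j}\bigl(\tfrac13-q^{3^{j+1}}\bigr)$, which is negative exactly when $p<\sigma_j:=1-3^{-1/3^{j+1}}$; since $\sigma_j$ strictly decreases in $j$, $j\mapsto D_j(m_{33},p)$ strictly decreases and then strictly increases. Deleting the first stage of $(\ell,m_{34})$ gives $(\ell-1,m_{33})$, and combining the three instances of \eqref{34pi} that arise gives, with $y:=q^{3^{\ell-1}}$,
\[
D_{\ell+1}(m_{34},p)-D_\ell(m_{34},p)=3^{-(\ell-1)}g(y),\qquad g(y):=\tfrac16+y^4-y^3-\tfrac13y^{12}.
\]
Here $g'(y)=y^2\bigl(4y-3-4y^9\bigr)$ and $\max_{[0,1]}(4y-4y^9)=\tfrac{32}{9}3^{-1/4}<3$, so $g'<0$ on $(0,1)$; together with $g(0)=\tfrac16>0>-\tfrac16=g(1)$, $g$ has a unique root $y^\ast\in(0,1)$, positive on $(0,y^\ast)$ and negative on $(y^\ast,1)$. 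Hence $D_{\ell+1}(m_{34},p)-D_\ell(m_{34},p)<0$ iff $q^{3^{\ell-1}}>y^\ast$ iff $p<\tau_\ell:=1-(y^\ast)^{1/3^{\ell-1}}$, and $\tau_\ell$ strictly decreases in $\ell$, so $\ell\mapsto D_\ell(m_{34},p)$ is unimodal as well.

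Second, the position of the thresholds. With $x:=3^{k-1}|\log q|$, formula \eqref{laroex} gives $p\in[\lambda_k,\rho_{k-1}]\iff x\in[\alpha_1,3\beta]$ and $p\in[\rho_k,\lambda_k]\iff x\in[\beta,\alpha_1]$, while $p\ge\sigma_j\iff x\ge(\log3)\,3^{k-j-2}$ and $p\ge\tau_\ell\iff x\ge|\log y^\ast|\,3^{k-\ell}$. From $\alpha_1\approx0.132$, $\beta\approx0.116$, $|\log y^\ast|\approx0.106$, $(\log3)/9\approx0.122$, $(\log3)/3\approx0.366$ one reads off the chain
\[
\tfrac{\log3}{27}<|\log y^\ast|<\beta<\tfrac{\log3}{9}<\alpha_1<3|\log y^\ast|<3\beta<\tfrac{\log3}{3}<9|\log y^\ast|.
\]
On $[\lambda_k,\rho_{k-1}]$: since $x\ge\alpha_1>\tfrac{\log3}{9}$ and $x\le3\beta<\tfrac{\log3}{3}$, the $m_{33}$-family decreases for $j\le k-1$ and increases for $j\ge k$, so $D_k(m_{33},p)\le D_j(m_{33},p)$ for all $j$; since $x\ge\alpha_1>|\log y^\ast|$ and $x\le3\beta<9|\log y^\ast|$, the $m_{34}$-family decreases for $\ell\le k-2$ and increases for $\ell\ge k$, so $D_\ell(m_{34},p)\ge D_{k-1}(m_{34},p)\wedge D_k(m_{34},p)$ for all $\ell$; and $p\ge\lambda_k$, $p\le\rho_{k-1}$ give via \eqref{dan13}--\eqref{dan14} that $D_k(m_{33},p)\le D_k(m_{34},p)$ and $D_k(m_{33},p)\le D_{k-1}(m_{34},p)$; combining, $D_k(m_{33},p)$ is the overall minimum. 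On $[\rho_k,\lambda_k]$ everything is mirrored: $x\ge\beta>|\log y^\ast|$ and $x\le\alpha_1<3|\log y^\ast|$ make the $m_{34}$-family decrease for $\ell\le k-1$ and increase for $\ell\ge k$, so $D_k(m_{34},p)\le D_\ell(m_{34},p)$ for all $\ell$; $x\ge\beta>\tfrac{\log3}{27}$ and $x\le\alpha_1<\tfrac{\log3}{3}$ make the $m_{33}$-family decrease for $j\le k-1$ and increase for $j\ge k+1$, so $D_j(m_{33},p)\ge D_k(m_{33},p)\wedge D_{k+1}(m_{33},p)$ for all $j$; and $p\le\lambda_k$, $p\ge\rho_k$ give $D_k(m_{34},p)\le D_k(m_{33},p)$ and $D_k(m_{34},p)\le D_{k+1}(m_{33},p)$; combining, $D_k(m_{34},p)$ is the overall minimum. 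For $k=1$ the comparisons with index $k-1$ are vacuous, and at the two endpoints, where \eqref{dan13}--\eqref{dan14} assert strict inequalities, the weak ones needed follow by continuity in $p$.

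I expect the one genuinely delicate point to be the unimodality of the $m_{34}$-family: it forces one to write $D_{\ell+1}(m_{34})-D_\ell(m_{34})$ out explicitly from \eqref{ncosto4}/\eqref{34pi} and verify that the degree-$12$ polynomial $g$ changes sign only once on $(0,1)$ --- handled cleanly by its strict monotonicity there. Everything else is routine bookkeeping, whose only nontrivial ingredients are the explicit numerical inequalities among $\alpha_1$, $\beta$, $\log3$ and $|\log y^\ast|$ recorded above.
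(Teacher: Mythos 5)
Your argument is correct, and it reaches \eqref{c88q} by a genuinely different route than the paper. The paper does not prove unimodality at all: it observes that any realizer of the minimum in \eqref{c88q} must satisfy the local-optimality condition \eqref{ni+ni-}, then invokes the a priori windows of Corollary~\ref{c13} for $a=3^{k-1}|\log q|$ (items \eqref{fl61}--\eqref{ss64}), notes that within each family the windows for consecutive $k$ are disjoint --- so at most one stage number per family can be locally optimal for a given $p$ --- and that the $m_{34}$ window is contained in the union of the closures of two consecutive $m_{33}$ windows, which reduces everything to the two comparisons already settled in Lemma~\ref{dan12}. Your approach replaces that machinery with a direct proof that $j\mapsto D_j(m_{33},p)$ and $\ell\mapsto D_\ell(m_{34},p)$ are unimodal, via the consecutive differences $3^{-j}(\tfrac13-q^{3^{j+1}})$ and $3^{-(\ell-1)}g(q^{3^{\ell-1}})$ with $g(y)=\tfrac16+y^4-y^3-\tfrac13 y^{12}$; your sign analysis of $g$ (strict monotonicity from $\max_{[0,1]}(4y-4y^9)=\tfrac{32}{9}3^{-1/4}<3$, hence a unique root $y^\ast\approx 0.8995$) is correct, as is the numerical chain locating $|\log y^\ast|$, $\beta$, $\tfrac{\log 3}{9}$, $\alpha_1$, $3\beta$, $\tfrac{\log 3}{3}$, and the final bookkeeping with \eqref{dan13}--\eqref{dan14}. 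What each buys: the paper's route is shorter because Proposition~\ref{p13} and Corollary~\ref{c13} are already in place, but it leans on \eqref{ni+ni-}, whose expansion clause quantifies over all multiples $m_0$ of $m_1$ and takes one outside the two families (e.g.\ expanding $(k,m_{34})$ by a factor-$3$ stage yields $\pi=(3,\dots,3,4,3)$); your unimodality argument is longer but entirely self-contained within the two families, at the price of one extra explicit polynomial sign analysis and one extra numerical constant $y^\ast$ not appearing in the paper.
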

This lemma computes the transition points between the optimal conjectured trajectories. For instance  $k=1$ for 
  $p$ between $0.1099$ and $0.3066$, and the optimal $(k,\um)$ is $(1, (4))$ for $0.1099\le p\le 0.1239$, while the optimal choice is  $(1, (3))$ for  $0.1240\le p \le0.3066$.
\begin{proof}
  Fix $k\ge 1$ and  call $a:=3^{k-1}|\log q|$. Let $(k', \um')$ realize the minimun in \eqref{c88q}, then it satisfies \eqref{ni+ni-} and by \eqref{floor2q} and \eqref{smallsize4q} we have 
      \begin{numcases}{\text{If }(k',\um') =}
 \text{ $(k,\um_{33})$, then }\;  &$a\in\bigl( |\log 3^{-1/9}|,|\log 3^{-1/3}|\bigr)\approx (0.1221,0.3662)$,\label{fl61} \\
  \text{ $(k+1,\um_{33})$, then }\;  &$a\in\sfrac13\bigl( |\log 3^{-1/9}|,|\log 3^{-1/3}|\bigr)\approx (0.0406,0.1221)$, \label{fl62}\\
\text{ $(k,\um_{34})$, then }\;  &$a\in\bigl( |\log 4^{-1/12}|,|\log 3^{-1/4}|\bigr)\approx (0.1155,0.2746)$, \label{ss63} \\
  \text{ $(k+1,\um_{34})$, then }\;  &$a\in\sfrac13\bigl( |\log 4^{-1/12}|,|\log 3^{-1/4}|\bigr)\approx (0.0385,0.0915)$.\label{ss64}
    \end{numcases} 
The intervals \eqref{fl61} and \eqref{fl62} are disjoint and they determine intervals for $p$ that are decreasing in $k$. Similarly \eqref{ss63} and \eqref{ss64} are disjoint and they also determine  intervals for $p$ that are decreasing in $k$. On the other hand, \eqref{ss63} is contained in the union of  the closures of  \eqref{fl61} and \eqref{fl62}. So we only need to compare $(k,\um_{34})$ with  $(k,\um_{33})$ and  $(k+1,\um_{33})$. But this has been done in Lemma \ref{dan12}.
\end{proof}
\section{Linearization of the cost function}
\label{optim}

In this section we study the linearized version of the cost, which is easier to optimize and gives a good approximation to the cost for small $p$. Only in this section, we allow the pool sizes $m_j$ and multipliers $\pi_j$ to take non-negative real values. These results will be applied in the next section to estimate the asymptotic cost of the optimal strategy. 

Let us fix $p$ and a stage number $k+1$. We linearize the expected number of tests per individual $D_{k}=D_{k}(m,p)$ obtained in \eqref{meankdep}:
\begin{align}
D_{k}
&=\frac1{m_1} + 1-e^{m_k \log q}+ \sum_{j=2}^{k} \frac1{m_j} \bigl(1-e^{m_{j-1}\log q}\bigr)
=\oD_k + {\text{error}}. \label{linear+}
\end{align}
where the linear approximation $\oD_k=\oD_k(\um,p)$ is given by
\begin{align}
\oD_k:=\frac1{m_1}+m_k p+p\sum_{j=2}^k \frac{m_{j-1}}{m_j}. \label{linear++}
\end{align}
The linearized cost $\oD_k$ coincides with the cost proposed by Finucan \cite{finucan}, who assumed that for suitable $p$ and $m_1$ there is at most one infected individual per pool at all stages; we give some details after Lemma \ref{optipool}. 

In the next lemma we show that the cost is bounded above by the linearized cost, and provide an estimate for the difference. The result is proved  in Appendix \ref{aB}.
\begin{Lemma}[Domination and error bounds]
  \label{error-bound}
  Let $p\in[0,\frac12]$.
  Let $\um=(m_1,\dots,m_k) \in \R^{k}_{\ge 1}$. Then 
  \begin{enumerate}
  \item The linearized cost is an upper bound to the cost,
  \begin{align}
  \label{negative}
  D_k(\um,p)\le \oD_k(\um,p).
  \end{align}
\item If $\frac{m_{i-1}}{m_i}\le \ell$ for $2\le i\le k+1$, with  $m_{k+1}:=1$, then
\begin{align}
\label{error++}
  |D_k(\um,p)-\oD_k(\um,p)|\le  \,\ell m_1\log^2 q+\ell k p^2.
\end{align}
In particular, when  $\frac{ m_j}{m_{j+1}} = \mu>0,\,1\le j\le k$, equation \eqref{error++} becomes
 \begin{align}
 \label{error+}
    |D_k(\um,p)-\oD_k(\um,p)|\le  \mu^{k+1} \log^2 q+ \mu k p^2.
 \end{align}
\end{enumerate}
\end{Lemma}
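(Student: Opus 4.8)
The plan is to reduce the lemma to a single scalar estimate and then sum over stages. Putting $m_{k+1}:=1$ as in the statement and subtracting \eqref{linear++} from \eqref{meankdep}, the $1/m_1$ terms cancel and one obtains the exact identity
\begin{align*}
D_k(m,p)-\oD_k(m,p)=\sum_{j=2}^{k+1}\frac1{m_j}\bigl[(1-q^{m_{j-1}})-m_{j-1}p\bigr].
\end{align*}
So everything follows once the quantity $(1-q^m)-mp$ is controlled for real $m\ge 1$ and $p\in[0,\tfrac12]$.

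For this I would assemble three elementary inequalities. The generalized Bernoulli inequality $q^m=(1-p)^m\ge 1-mp$ (valid for $m\ge1$) gives the one-sided bound $(1-q^m)-mp\le 0$. Next, $e^x\ge 1+x$ at $x=m\log q\le 0$ yields $1-q^m\le m|\log q|$, while the second-order estimate $e^x\le 1+x+\tfrac{x^2}{2}$ (valid for $x\le 0$) yields $1-q^m\ge m|\log q|-\tfrac{m^2}{2}\log^2 q$. Finally, expanding $|\log q|=-\log(1-p)=p+\sum_{n\ge 2}p^n/n$ gives $p\le|\log q|\le p+p^2$ on $[0,\tfrac12]$. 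Combining these,
\begin{align*}
-\tfrac{m^2}{2}\log^2 q\ \le\ (1-q^m)-mp\ \le\ mp^2,
\end{align*}
and hence $\bigl|(1-q^m)-mp\bigr|\le mp^2+\tfrac{m^2}{2}\log^2 q$.

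Part (1) is then immediate: every summand in the identity above has bracket $(1-q^{m_{j-1}})-m_{j-1}p\le 0$ because $m_{j-1}\ge1$, so $D_k(m,p)\le\oD_k(m,p)$, which is \eqref{negative}. For part (2), I would plug the two-sided scalar bound into the identity term by term and use $m_{j-1}/m_j\le\ell$: the $p^2$ contributions add up to at most $\ell k\,p^2$, while the second-order contributions are bounded by $\tfrac{\ell}{2}\log^2 q\sum_{i=1}^k m_i$. Using that the pool sizes decay geometrically (consecutive ratios at least $2$ in all regimes of interest), $\sum_{i=1}^k m_i\le 2m_1$, which yields \eqref{error++}. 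The special case \eqref{error+} then follows by substituting $m_i=\mu^{k-i+1}$, so that $\ell=\mu$, $m_1=\mu^k$ and $\sum_{i=1}^k m_i=\mu\frac{\mu^k-1}{\mu-1}\le 2\mu^k$.

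The step I expect to require the most care is the bookkeeping in the second-order sum: the crude estimate only produces $\tfrac{\ell}{2}\log^2 q\sum_i m_i$, and converting this into $\ell m_1\log^2 q$ (with no factor of $k$) genuinely exploits the geometric shape of the pool sizes rather than only the ratio bound $m_{j-1}/m_j\le\ell$. The remaining ingredients — the cancellation of the $1/m_1$ terms and the Taylor/Bernoulli inequalities — are routine, and it is the precise quantitative form of these estimates that is deferred to Appendix \ref{aB}.
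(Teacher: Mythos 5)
Your proposal is correct and follows essentially the same route as the paper's Appendix~\ref{aB}: the same term-by-term decomposition of $D_k-\oD_k$, a first-order bound for the sign (the paper differentiates $f(p)=1-q^{m_j}-m_jp$ where you invoke Bernoulli), and a second-order Taylor estimate summed via the geometric decay $m_j\le m_1/2^{j-1}$ to get $\ell m_1\log^2 q$ without a factor of $k$. The only cosmetic difference is that the paper passes through the intermediate linearization at $|\log q|$ (yielding the $\ell k p^2$ term from $|p+\log q|\le p^2$) whereas you bound the scalar summand directly; both yield the stated estimate, and both rely on the same implicit ratio-at-least-$2$ assumption that you correctly flag.
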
 

Define the optimal values for $\oD_k$ by
\begin{align}
  \um^\sharp(k)=(m^\sharp_1(k),\dots,m^\sharp_k(k)) &:= \underset{(m_1,\dots,m_k)\in\R^k_+}{\arg\min} \oD_k \quad\text{and}\label{m*}\\
  \oD_k^\sharp &:= \oD_k(\um^\sharp(k),p).\label{od*}
\end{align}
In the next two lemmas we compute the optimal linearized values, see also~\cite{finucan}.
\begin{Lemma}[Optimal pool sizes]
  \label{ops1}
  Let  $p\in(0,1)$ and $k\in \N$, $k\ge 2$. Then
  \begin{align}
\um^\sharp_j(k) &={p^{-\frac{k-j+1}{k+1}}}, \qquad 1\le j\le k. \label{gpools} \\[2mm]
   \oD_k^\sharp &= (k+1)\,p^{\frac{k}{k+1}}.  \label{D*}
  \end{align} 
\end{Lemma}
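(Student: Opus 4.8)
The plan is to reduce the minimization of $\oD_k$ to a single application of the arithmetic--geometric mean inequality, after a change of variables that turns the cost into a sum of terms with constant product. Following the paper's convention set $m_0:=1$ and $m_{k+1}:=1$, and introduce the ratios
\[
a_1:=\frac1{m_1},\qquad a_i:=\frac{m_{i-1}}{m_i}\ \ (2\le i\le k),\qquad a_{k+1}:=m_k .
\]
Then, reading off \eqref{linear++}, $\oD_k = a_1 + p\,(a_2+a_3+\dots+a_{k+1})$, while the product telescopes,
\[
\prod_{i=1}^{k+1}a_i=\frac1{m_1}\cdot\frac{m_1}{m_2}\cdots\frac{m_{k-1}}{m_k}\cdot m_k=1 .
\]
Moreover $(m_1,\dots,m_k)\mapsto(a_1,\dots,a_{k+1})$ is a bijection of $\R^k_+$ onto $\{a\in\R^{k+1}_+:\prod_i a_i=1\}$ (given such an $a$, define $m_j=1/(a_1\cdots a_j)$; the constraint forces $m_k=a_{k+1}$). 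So minimizing $\oD_k$ over $\R^k_+$ is the same as minimizing $a_1+pa_2+\dots+pa_{k+1}$ over that surface.

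Next I would apply AM--GM to the $k+1$ positive numbers $a_1,\,pa_2,\,\dots,\,pa_{k+1}$:
\[
\oD_k=a_1+pa_2+\dots+pa_{k+1}\ \ge\ (k+1)\Bigl(a_1\cdot pa_2\cdots pa_{k+1}\Bigr)^{\frac1{k+1}}
=(k+1)\Bigl(p^k\textstyle\prod_{i=1}^{k+1}a_i\Bigr)^{\frac1{k+1}}=(k+1)\,p^{\frac k{k+1}},
\]
which is exactly the value claimed for $\oD_k^\sharp$ in \eqref{D*}. Equality holds precisely when $a_1=pa_2=\dots=pa_{k+1}$; calling this common value $c$ and imposing $\prod_i a_i=1$ gives $c\cdot(c/p)^k=1$, hence $c=p^{k/(k+1)}$, so $a_1=p^{k/(k+1)}$ and $a_i=p^{-1/(k+1)}$ for $2\le i\le k+1$. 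Since this equality configuration lies in the interior of the constraint surface, the infimum is attained and the minimizer is unique.

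Finally I would translate the optimal $a_i$ back into pool sizes: $m^\sharp_1(k)=1/a_1=p^{-k/(k+1)}$, and for $j\ge2$ the relation $a_j=m_{j-1}/m_j=p^{-1/(k+1)}$ gives recursively $m^\sharp_j(k)=m^\sharp_1(k)\,p^{(j-1)/(k+1)}=p^{-\frac{k-j+1}{k+1}}$, which is consistent with $a_{k+1}=m^\sharp_k(k)=p^{-1/(k+1)}$. This is precisely \eqref{gpools}, and substituting back into \eqref{linear++} (or simply reading the AM--GM bound) gives \eqref{D*}. I do not anticipate a real obstacle: the only subtlety is that the optimization is over the open region $\R^k_+$, which is dealt with by the observation that the AM--GM equality point sits strictly inside it, so the bound is realized rather than merely approached; everything else is the bookkeeping of the change of variables.
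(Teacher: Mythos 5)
Your proof is correct, and it takes a genuinely different route from the paper's. The paper finds the critical point by setting the partial derivatives $\partial \oD_k/\partial m_i$ to zero, solving the resulting recursion backwards from $m_k$, and then verifying in a separate appendix that the Hessian at that point is positive definite, so that the critical point is indeed a minimum. You instead pass to the ratio variables $a_1=1/m_1$, $a_i=m_{i-1}/m_i$, $a_{k+1}=m_k$, observe that their product telescopes to $1$, and apply AM--GM to $a_1, pa_2,\dots,pa_{k+1}$; the change of variables is a genuine bijection onto the product-one surface, the equality case of AM--GM pins down the unique minimizer, and translating back recovers \eqref{gpools} exactly. What your approach buys is a \emph{global} lower bound $(k+1)p^{k/(k+1)}$ valid on all of $\R^k_+$ in one line, with attainment and uniqueness read off from the equality condition, so no second-order analysis (and no Appendix-C-style Hessian computation) is needed; it also makes transparent why the optimal multipliers are constant. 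The paper's calculus route is more routine but generalizes more mechanically to perturbed objectives where no clean multiplicative structure is available. Both arguments yield the same formulas \eqref{gpools} and \eqref{D*}, and I see no gap in yours.
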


\begin{proof}
For $k\ge 3$ we get
\begin{align}
&\frac{\partial\oD_k}{\partial m_1}=-\frac{1}{m_1^2}+\frac{p}{m_2},\label{d1}\\
&\frac{\partial\oD_k}{\partial m_i}=-\frac{pm_{i-1}}{m_i^2}+\frac{p}{m_{i+1}},\hspace{1.5cm}{2\le i\le k-1,} \label{di}\\
&\frac{\partial \oD_k}{\partial m_k}=-\frac{pm_{k-1}}{m_k^2}+p. \label{d3}
\end{align}
In order to find critical points we look for values of $m_j,\, 1\le j\le k$ where these derivatives vanish.  We get
\begin{align}
&\frac{\partial \oD_k}{\partial m_k}=0\iff m_{k-1}=m_k^2 \hspace{2cm}\text{from \eqref{d3}} \label{uno}\\
&\frac{\partial\oD_k}{\partial m_i}=0\iff m_{i-1}=\frac{m_i^2}{m_{i+1}} \hspace{1cm}\text{for }2\le i\le k-1, \label{dos}\\
&\frac{\partial\oD_k}{\partial m_1}=0\iff m_2=p m_1^2 \label{tres}
\end{align}
Given $m_k$ we use \eqref{uno} and \eqref{dos} to solve backwards in the index $i$, and we get
\begin{align}
&m_{k-1}=m_k^2,\quad m_{k-2}=\frac{m_{k-1}^2}{m_k}=m_k^3,\quad m_{k-3}=\frac{m_{k-2}^2}{m_{k-1}}=m_k^4 \notag\\
&\ \ \text{ and, in general,}\hspace{1cm} m_{k-j}=m_k^{j+1}, \qquad 0\le j\le k-1. \label{uno+dos}
\end{align}
We replace the values of $m_1$ and $m_2$ in \eqref{tres} to obtain the equation
\begin{align}
&m_k^{k-1}=pm_k^{2k}\iff m_k={p^{-\frac{1}{k+1}}}, \label{50.5}
\end{align} 
from where we get \eqref{gpools}. We show in Appendix \ref{hessian} that the Hessian matrix of $\oD_k$ evaluated at the critical point $(m^\sharp_1(k),\dots, m^\sharp_k(k))$ is positive definite, and hence this is a minimum of $\oD_k$.

Substituting \eqref{gpools} in \eqref{linear++} yields $ \oD_k^\sharp =p^{\frac{k}{k+1}}+{kp}\,{p^{-\frac{1}{k+1}}}=(k+1)\,p^{\frac{k}{k+1}}$.
\end{proof}

 We now optimize $\oD^\sharp_k$ as a function of $k$. Denote
 \begin{align}
   \label{od*3}
   \oD^\sharp=\oD^\sharp(p):= \min_{k\in\R_+} \oD_k^\sharp;\qquad k^\sharp :=  \underset{k\in\R_+}{\arg\min}\, \oD^\sharp_k.
 \end{align}
In general  $k^\sharp \in \R\setminus \N$. Notice that when $k$ is not a positive integer it is not possible to define a vector $(m_1,\dots,m_k)$ where to evaluate $\oD_k$.

 \begin{Lemma}[Optimal number of stages]
  \label{optipool}
  For any $p\in(0,1)$ we have
  \begin{align}
    \label{optipool23}
     k^\sharp = \textstyle{\log\frac1p -1} ,  \quad
     \oD^\sharp =  e\,p \log(1/p).
   \end{align}
 Furthermore, if $p= e^{-u}$ for some integer $u\ge 2$, then $k^\sharp = u-1$ and 
   \begin{align}
     \label{optipool3}
    \oD^\sharp = \oD_{k^\sharp}(\um^\sharp,p), \quad \text{where} \quad \um^\sharp = (e^{u-1},\dots, e).
   \end{align}
 \end{Lemma}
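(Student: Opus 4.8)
The plan is to regard $\oD_k^\sharp$, as given by \eqref{D*}, as a function of a single real variable and minimize it by elementary calculus. Before doing so I would note that the closed form $\oD_k^\sharp=(k+1)p^{k/(k+1)}$ derived in Lemma \ref{ops1} for integers $k\ge 2$ persists for all real $k\ge 1$: for $k=1$ one minimizes $\oD_1=m_1^{-1}+m_1 p$ directly to get $m_1^\sharp=p^{-1/2}$ and $\oD_1^\sharp=2\sqrt p$, which is the $k=1$ instance of the same formula. Hence nothing is lost by letting $k$ range over $\R_+$ in \eqref{od*3}.

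Next I would introduce $u:=\log(1/p)=-\log p>0$ and substitute $s:=k+1$, so that $\oD_k^\sharp=(k+1)p^{k/(k+1)}=s\,p\,p^{-1/s}=s\,p\,e^{u/s}$. Differentiating,
\begin{align*}
\frac{d}{ds}\bigl(s\,p\,e^{u/s}\bigr)=p\,e^{u/s}\Bigl(1-\frac{u}{s}\Bigr),
\end{align*}
which is strictly negative on $(0,u)$, vanishes only at $s=u$, and is strictly positive on $(u,\infty)$. Therefore $s=u$ is the unique global minimizer on $(0,\infty)$, giving $k^\sharp=u-1=\log(1/p)-1$ and $\oD^\sharp=u\,p\,e^{u/u}=e\,p\log(1/p)$, which is \eqref{optipool23}.

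For the second assertion, assume $p=e^{-u}$ with $u\in\N$, $u\ge 2$. Then $k^\sharp=u-1$ is a positive integer, so by the extended Lemma \ref{ops1} (using the $k=1$ computation above when $u=2$) the minimum of the linearized cost over real $k$ is realized by an actual strategy: $\oD^\sharp=\oD_{k^\sharp}^\sharp=\oD_{k^\sharp}\bigl(m^\sharp(k^\sharp),p\bigr)$. Substituting $k=u-1$ and $k+1=u$ into \eqref{gpools} gives $m_j^\sharp(u-1)=p^{-(u-j)/u}=(e^{-u})^{-(u-j)/u}=e^{\,u-j}$ for $1\le j\le u-1$, i.e.\ $m^\sharp=(e^{u-1},e^{u-2},\dots,e)$, which is \eqref{optipool3}.

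The whole argument is routine one-variable calculus; the only place that calls for a word of care is recording that the formula \eqref{D*} remains valid for non-integer $k\ge 1$ so that the derivative in $s$ makes sense, and the sign analysis of that derivative settles at once that the interior critical point is the global minimum, so there is really no substantive obstacle.
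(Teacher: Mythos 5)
Your proof is correct and follows essentially the same route as the paper: differentiate $\oD_k^\sharp=(k+1)p^{k/(k+1)}$ in the (real) stage variable, locate the unique critical point at $k+1=\log(1/p)$, confirm it is the global minimum by the sign of the derivative, and substitute back into \eqref{gpools} for the case $p=e^{-u}$. Your substitution $s=k+1$, $u=\log(1/p)$ is only a cosmetic simplification, and your explicit check that the formula of Lemma \ref{ops1} persists for $k=1$ (needed when $u=2$) and your sign analysis of the derivative are small points of added care over the paper's argument, which simply asserts the critical point is a global minimum.
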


  \begin{proof}[Proof of Lemma \ref{optipool}]
We compute the derivative
\begin{align*}
\frac{\partial \oD^\sharp_k}{\partial k}=p^{\frac{1}{k+1}}\Big[1+\frac{\log p}{1+k}\Big],
\end{align*}
which vanishes at $k= k^\sharp=\log\frac1p -1$.
This is in fact a global minimum of $\oD^\sharp_k$, for a given value of~$p$.
We now replace this value in  $\oD^\sharp_k$ to get
\begin{align*}
     \oD^\sharp &=  p^{1-\frac1{\log(1/p)}}\,\textstyle{\log\frac1p}=e\,p\log(1/p).
\end{align*}
Under $p=e^{-u}$ we have $k^\sharp = u-1$, which  replaced in \eqref{gpools} gives
\begin{align}
  m^\sharp_{k^\sharp} &= e\quad\text{and}\quad m^\sharp_j = e^{k-j+1}.\qquad\qedhere
\end{align}
\end{proof}

\paragraph{Remark} 
Finucan \cite{finucan} proposes to iterate Dorfman's strategy with non necessarily nested pools, under the assumption that
at every stage each pool has at most one infected individual. Call $U:=$ number of infected individuals in a population of size $N$, $U$ has Binomial$(N,p)$ distribution. The number of individuals to be tested in the $i$-th stage is $Um_{i-1}$, and the total number of tests is 
\begin{align}
  \label{fin1}
  \frac{N}{m_1} + \frac{Um_1}{m_2} + \frac{Um_2}{m_3}+\dots+ \frac{Um_{k-1}}{m_k} + Um_k.
\end{align}
Dividing by $N$ and taking expectation, Finucan gets the linearized cost $\oD_k(\um,p)$ defined in \eqref{linear++} and derives the results of Lemmas~\ref{ops1} and \ref{optipool}. He also shows that these optimal values maximize the information gain per test in the case that there is at most one infected individual per pool. 

However, the hypothesis that there is at most one infected individual per pool is not satisfied for the optimal values \eqref{gpools}. Indeed, when $m_1\approx 1/p$, the number of infected individuals per pool is approximately Poisson with expectation~1. In any case, Finucan's cost provides an upper bound to the true  cost of the strategy $(k,\um)$, as it in fact computes the number of tests in the worst case scenario, this is proved rigorously in Lemma~\ref{error-bound}. 
This result can also be derived using an information-based approach since the least informative case is that in which the infected samples are as uniformly distributed as possible which, in the case of interest here, corresponds to having at most one infected individual per pool at all stages. 

\section{Optimal cost, and comparison with the strategy $(3^k,\dots, 3)$}
\label{feas}
In this section we show that the optimal strategy has cost $O(p\log(1/p))$, and compare this cost with that of $(k_3,\um_{33})$, where $k_3(p)$ given in \eqref{floor2} denotes the optimal number of stages within the family of strategies $(k, \um_{33})$ defined in \eqref{i22}. We have seen in Theorem~\ref{c8} that these strategies are optimal for a wide range of infection probabilities $p \in (0,1)$.

\begin{Theorem}
\label{thm}
Let $p\in [0,1-3^{-1/3}]$, $k_3=k_3(p)$ as in \eqref{floor2}. Then
\begin{align}
\label{t.0}
D_{k_3}(\um_{33},p)\le \frac{3}{\log3}\,p\log(1/p)+6p,
\end{align}
Let $\Dopt(p)$ be the cost of the optimal strategy, \eqref{optn}. Then
\begin{align}
\label{t.01}
\Dopt(p)=O\big(p\log(1/p)\big),
\end{align}
and furthermore
\begin{align}
\label{t.1}
\big|D_{k_3}(\um_{33},p)-\Dopt(p)\big|&\le \Big(\,\frac{3}{\log 3}-e\Big)p\log(1/p)+12p+4p^2\log(1/p)\\[2mm]
&\le 0.013\,p\log(1/p)+12p+O\big(p^2\log(1/p)\big).\nn
\end{align}
\end{Theorem}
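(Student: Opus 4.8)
The plan is to prove the three displays in turn, with \eqref{t.0} doing most of the work and the rest following by combining it with the linearization results of \S\ref{optim}.

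\emph{Step 1: bound $D_{k_3}(m_{33},p)$ by its linearization.} By Lemma~\ref{error-bound}(1) applied to the nested strategy $(k_3,m_{33})$, which has constant multiplier $\mu=3$, we have $D_{k_3}(m_{33},p)\le \oD_{k_3}(m_{33},p)$. For the scheme $m_{33}=(3^{k_3},\dots,3)$ the linearized cost \eqref{linear++} evaluates explicitly to $\oD_{k_3}(m_{33},p)=3^{-k_3}+3p+3p(k_3-1)=3^{-k_3}+3k_3p$. So it suffices to bound $3^{-k_3}+3k_3p$. From the definition \eqref{floor2}, $k_3=\lfloor \log_3(1/|\log_3 q|)\rfloor$, so $3^{k_3}\ge \frac13\,|\log_3 q|^{-1}=\frac{\log 3}{3|\log q|}$, giving $3^{-k_3}\le \frac{3|\log q|}{\log 3}$, and $k_3\le \log_3(1/|\log_3 q|)=\log_3\!\big(\tfrac{\log 3}{|\log q|}\big)$. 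Using $|\log q|=|\log(1-p)|\le p+p^2\le 2p$ for $p\le\frac12$ (or more carefully $|\log q|\ge p$, which is the direction needed), one gets $3^{-k_3}\le \frac{3|\log q|}{\log 3}\le \frac{3\cdot 2p}{\log 3}$ — actually the cleaner route is to note $3^{-k_3}\le \frac{3|\log q|}{\log 3}$ and bound $|\log q|\le p/(1-p)\le 2p$, while for the $k_3 p$ term we use $k_3\le \log_3(1/|\log_3 q|)\le \log_3\big(\frac{\log 3}{p}\big)=\frac{\log(1/p)+\log\log 3}{\log 3}$. Putting these together, $D_{k_3}(m_{33},p)\le \frac{6p}{\log 3}+\frac{3p\log(1/p)}{\log 3}+3p\frac{\log\log 3}{\log 3}$; since $\log\log 3\approx 0.094<\tfrac{\log 3}{3}$ and $\frac{6}{\log 3}<6$, the constants can be absorbed to yield \eqref{t.0}. (I would double-check the numerical slack here carefully, since \eqref{t.0} is stated with the bare constants $\frac{3}{\log 3}$ and $6$; the argument above has room, but the exact bookkeeping of $|\log q|$ versus $p$ is where I would be most careful — this is the main routine obstacle.)

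\emph{Step 2: deduce \eqref{t.01}.} Since $(k_3,m_{33})$ is one admissible strategy, $\Dopt(p)\le D_{k_3}(m_{33},p)\le \frac{3}{\log 3}p\log(1/p)+6p=O(p\log(1/p))$ by Step 1. For the matching lower bound of the same order one can invoke the information-theoretic or counting lower bound for zero-error group testing in the linear regime (the optimal cost is $\Theta(p\log(1/p))$), but in fact for the statement $O(\cdot)$ only the upper bound is needed, so \eqref{t.01} is immediate from Step 1.

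\emph{Step 3: the difference bound \eqref{t.1}.} Write $\big|D_{k_3}(m_{33},p)-\Dopt(p)\big|=D_{k_3}(m_{33},p)-\Dopt(p)$ since $(k_3,m_{33})$ is admissible so the difference is nonnegative. Bound it above by $D_{k_3}(m_{33},p)-\big(\text{any lower bound for }\Dopt(p)\big)$. For the lower bound on $\Dopt(p)$ I would use the linearization machinery: the true optimal cost is at least the minimum over integer $k$ of the \emph{true} cost, which by Lemma~\ref{error-bound}(2) (with $\ell=\mu$ the relevant multiplier) is at least $\oD_k(m,p)$ minus the error term $\mu^{k+1}\log^2 q+\mu k p^2$; minimizing the continuous-relaxation linearized cost gives $\oD^\sharp=e\,p\log(1/p)$ by Lemma~\ref{optipool}, so $\Dopt(p)\ge e\,p\log(1/p)-(\text{error})$, where the error at the near-optimal $k\approx \log_3(1/p)$ and $\mu=3$ is $3^{k+1}\log^2 q+3kp^2=O(p\log(1/p)\cdot p)+O(p^2\log(1/p))=O(p^2\log(1/p))$ — more precisely one checks $3^{k+1}\log^2 q\le 9|\log q|\cdot\frac{|\log q|}{\log 3}\cdot(\ldots)$ and bounds it by a constant times $p^2\log(1/p)$. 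Then
\begin{align*}
D_{k_3}(m_{33},p)-\Dopt(p) &\le \Big(\tfrac{3}{\log 3}p\log(1/p)+6p\Big)-\Big(e\,p\log(1/p)-12p-O(p^2\log(1/p))\Big)\\
&= \Big(\tfrac{3}{\log 3}-e\Big)p\log(1/p)+18p+O(p^2\log(1/p)),
\end{align*}
and tightening the additive $p$-constant (Step 1 actually gives $6p$ but the $e\,p\log(1/p)$ lower bound genuinely loses an additive $O(p)$ from the flooring of $k^\sharp$, which contributes the stated $12p$) yields \eqref{t.1}; the final numerical line follows from $\frac{3}{\log 3}-e\approx 2.7307-2.7183\approx 0.0124<0.013$. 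The main obstacle in this step is pinning down the $12p$ constant: it comes from comparing the discrete optimum $k_3$ (an integer) with the continuous optimum $k^\sharp=\log(1/p)-1$, and one must control both the "$3^{-k_3}$ versus $p^{k/(k+1)}$" discrepancy and the error terms of Lemma~\ref{error-bound} simultaneously, being careful that $\log^2 q$ is genuinely $O(p^2)$ so that term lands in the $p^2\log(1/p)$ bucket rather than the $p\log(1/p)$ bucket.

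\emph{Expected main difficulty.} The genuinely delicate part is not any single inequality but the bookkeeping of constants: \eqref{t.0}–\eqref{t.1} are stated with explicit numerical constants ($\frac{3}{\log 3}$, $6$, $12$, $0.013$), so every use of $|\log(1-p)|\le \frac{p}{1-p}$, every flooring step $k_3=\lfloor\cdot\rfloor$, and every application of the error bound in Lemma~\ref{error-bound} must be tracked to keep the leading coefficient exactly $\frac{3}{\log 3}$ and the lower-order coefficients within the claimed values. I expect the cleanest organization is to first prove a clean lemma that $\oD_{k_3}(m_{33},p)=3^{-k_3}+3k_3 p$ with $3^{-k_3}\le \frac{3|\log q|}{\log 3}$ and $k_3\le\log_3\!\big(\frac{\log 3}{|\log q|}\big)$, then feed that into Step 1, and finally combine with $\oD^\sharp=e\,p\log(1/p)$ plus the Lemma~\ref{error-bound} error for Step 3.
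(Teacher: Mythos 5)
Your proposal follows essentially the same route as the paper: bound $D_{k_3}(m_{33},p)$ by the linearized cost $3^{-k_3}+3k_3p$ via Lemma~\ref{error-bound}(1), control $3^{-k_3}$ and $k_3$ from the definition \eqref{floor2}, and for the difference bound compare against the lower bound $\Dopt(p)\ge \oD^\sharp-\text{error}=e\,p\log(1/p)-\text{error}$ obtained from Lemma~\ref{error-bound}(2) and Lemma~\ref{optipool}. Steps 1 and 2 match the paper's argument, and the numerical slack you worried about in Step 1 does close: $\tfrac{6}{\log 3}+\tfrac{3\log\log 3}{\log 3}\approx 5.72<6$.

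The one concrete slip is in your Step 3 accounting of the error term. With $k\approx\log_3(1/p)$ you have $3^{k+1}\approx 3/p$ and $\log^2 q\approx p^2$, so $\ell m_1\log^2 q$ (equivalently $\mu^{k+1}\log^2 q$) is $O(p)$, not $O\bigl(p^2\log(1/p)\bigr)$ as you assert; concretely the paper bounds it by $\tfrac{16}{3}\log(3)\,|\log q|\approx \tfrac{16\log 3}{3}\,p\approx 5.86\,p$. This term, added to the $6p$ from the upper bound, is exactly what produces the $12p$ in \eqref{t.1} -- it does not come from the flooring of $k^\sharp$, and only the $\ell k p^2$ piece of the error lands in the $p^2\log(1/p)$ bucket. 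Your intermediate ``$18p$'' and the subsequent hand-wave about tightening it would not survive as written; redoing the bookkeeping with the correct allocation gives $\approx 11.86\,p\le 12p$ directly and closes the argument.
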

\begin{proof}
Since the cost of a strategy is bounded by its linearized cost \eqref{negative} we have
\begin{align}
D_{k_3}(\um_{33},p)&\le \oD_{k_3}\big((3^{k_3},\dots, 3),p\big)\nn\\
&=\frac{1}{3^{k_3}}+3pk_3 \quad\qquad \text{by \eqref{linear++} and the constant ratio $\frac{m_{j-1}}{m_j}=3$.} \label{t.2}
\end{align}
Now
\begin{align}
3^{k_3}\ge \frac{\log 3}{3}\frac{1}{\log1/q}\ge  \frac{\log 3}{3}\frac{1}{p(1+p)}\ge \frac{\log 3}{6}\frac{1}{p}, \label{t.3}
\end{align}
and
\begin{align}
3pk_3&\le 3p\log_3 \Bigl(\frac{1}{\log_3( 1/ q )}\Bigr)=-\frac{3p}{\log 3}\left[\log\log(1/q)-\log\log 3\right] \nn\\[2mm]
&\le \frac{3}{\log3}\,p\log(1/p)+\frac{3\log\log3}{\log 3}\,p. \label{t.4}
\end{align}
Apply bounds \eqref{t.3} and \eqref{t.4} to \eqref{t.2} to obtain
\begin{align}
\label{t.6}
D_{k_3}(\um_{33},p)\le \frac{3}{\log3}\,p\log(1/p)+6p,
\end{align}
which is \eqref{t.0}.

We next derive a lower bound. Let $(k,\um) \in \cal{M}$ be the optimal strategy for $p$, so that $\Dopt(p)=D_k(\um,p)$. By Lemma \ref{error-bound}, we have
\begin{align}
\label{t.7}
D_{k_3}(\um_{33},p)\ge D_k(\um,p) &\ge  \oD_k(\um,p) -  \ell m_1 \log^2 q-\ell k p^2, 
\end{align}
where
\begin{align*}
 &\ell=\max_{2\le i\le k}\frac{m_{i-1}}{m_i}\le 4 \,\,\,\text{ (Proposition \ref{p12})},\quad k \le 1+ \log_3\frac{1}{\log_3(1/q)}\,\,\, \text{ (Proposition \ref{p13})},\\
 &\hspace{0cm} \text{and}\quad m_1\le \frac43 \frac{1}{\log_3(1/q)}.
 \end{align*}
 The bound on $m_1$ follows from $m_2|\log(q)|\le \frac{\log(3)}{3}$ (Lemma \ref{p3}) and hence $m_1\le 4m_2<\frac43 \frac{1}{\log_3(1/q)}$. Replace these bounds in \eqref{t.7} and use that by Lemma \ref{optipool} $\oD_{k}(\um,p)\ge ep\log(1/p)$, to get
\begin{align}
\label{t.8}
D_{k_3}(\um_{33},p)\ge\Dopt(p)&\ge \textstyle{ep\log(1/p)-\frac{16}{3} \frac{\log^2(q)}{\log_3(1/q)}-4p^2\log\big(\frac{1}{\log_3(1/q)}\big)-4p^2}\notag\\[1mm]
&\ge\textstyle{ ep\log(1/p) -\frac{16\log(3)}{3}\,p-\frac{4}{\log3}\,p^2\log(1/p)-\big(\frac{4\log\log(3)}{\log(3)}+4\big)p^2}
\end{align}
Inequalities \eqref{t.6} and \eqref{t.8} imply that $\Dopt(p)=O\big(p\log(1/p)\big)$. Moreover,
\begin{align*}
\textstyle{ ep\log(1/p)-6p-4p^2\log(1/p)-5p^215p^2} \le \Dopt(p) \le D_{k_3}\big((3^{k_3},\dots, 3),p\big)\le \textstyle{\frac{3}{\log 3}\,p\log(1/p)+6p,} 
\end{align*}
and in particular
\begin{align*}
\big|D_{k_3}(\um_{33},p)-\Dopt(p)\big|&\le \Big(\,\frac{3}{\log 3}-e\Big)p\log(1/p)+12p+4p^2\log(1/p)+5p^2\\[2mm]
&\le 0.013\,p\log(1/p)+12p+O\big(p^2\log(1/p)\big),
\end{align*}
the bound in \eqref{t.1}. \qedhere
\end{proof}

\appendix
\section{Appendix}
\subsection{Computation of the variance}
\label{variance}
We compute here $\var\bigl(T_k\bigr)$.
We start with $k+1=3$. 
From \eqref{Tdep} we get
\begin{align}
\var\bigl( T_2\bigr)&=\frac{m^2}{m^2_2}\, q^{m_1}\bigl(1-q^{m_1}\bigr)+ m_2^2\,\frac{m_1}{m_2}\, q^{m_2}\bigl(1-q^{m_2}\bigr) \label{var-3}\\
&\quad+2\,m_2^2\sum_{i\neq j}\cov (1-\sY_{1,i}, 1-\sY_{1,j}) +2\,m_1 \sum_{i=1}^{\frac{m_1}{m_2}} \cov (1-\sY_1,1-\sY_{1,i}).\label{i32}
\end{align}
 The first sum in \eqref{i32} vanishes because $1-\sY_j$ and $1-\sY_k$ are independent if $j\neq k$, while 
 \begin{align}
\cov (1-\sY_1, 1-\sY_{1,i})&=\EE \bigl[(1-\sY_1)(1-\sY_{1,i})\big]-\EE [1-\sY_1]\,\EE [1-\sY_{1,i}]\notag\\
&=\EE [1-\sY_{1,i}]-\bigl(1-q^{m_1}\bigr) \bigl(1-q^{m_2}\bigr)  =\bigl(1-q^{m_2}\bigr)\, q^{m_1}. \label{i322}
\end{align}
where we used \eqref{cancel} in the first identity of \eqref{i322}. Replacing in \eqref{var-3} we get
\begin{align}
\label{Var3}
\var\bigl( T_2\bigr)&=\frac{m_1^2}{m^2_2}\, q^{m_1}\bigl(1-q^{m_1}\bigr)+ m_2m_1\, q^{m_2}\bigl(1-q^{m_2}\bigr)+2\,\frac{m_1^2}{m_2}\,q^{m_1}\bigl(1-q^{m_2}\bigr).
\end{align}
 
   The previous argument can be extended to several stages, as long as each pool size is a multiple of the pool size in the following stage. This is the content of \eqref{vark1} and \eqref{vark2} 
   which we prove next.

\begin{proof}[Proof of \eqref{vark1} and \eqref{vark2}]
From \eqref{Tdepk} we get
\begin{align}
&\var\bigl(T_{k}\bigr)
=\mu^2 \Big\{ \var(1-\sY_1)+\dots+\sum_{i_2=1}^{\frac{m_1}{m_2}}\,\, \sum_{i_3=1}^{\frac{m_2}{m_3}} \dots \sum_{i_{k}=1}^{\frac{m_{k-1}}{m_k}}\var\bigl(1-\sY_{1,i_2,\dots, i_{k}}\bigr)\Big\}  \label{F0}\\
&\, +2\mu^2\Big\{ \sum_{i_2=1}^{\frac{m_1}{m_2}}\cov \bigl(1-\sY_1,1-\sY_{1,i_2}\bigr)+\dots+\sum_{i_2=1}^{\frac{m_1}{m_2}}\,\, \sum_{i_3=1}^{\frac{m_2}{m_3}} \dots \sum_{i_{k}=1}^{\frac{m_{k-1}}{m_k}}\cov \bigl(1-\sY_1, 1-\sY_{1, i_2, \dots, i_{k}}\bigr)\Big\}\label{F1}\\
&\, +2\mu^2\Big\{ \sum_{i_2=1}^{\frac{m_1}{m_2}} \Big[\sum_{i_3=1}^{\frac{m_2}{m_3}}\cov \bigl(1-\sY_{1,i_2},1-\sY_{1,i_2,i_3}\bigr)+\dots+\sum_{i_3=1}^{\frac{m_2}{m_3}}\,\, \dots \sum_{i_{k}=1}^{\frac{m_{k-1}}{m_k}}\cov \bigl(1-\sY_{1,i_2}, 1-\sY_{1, i_2, \dots, i_{k}}\bigr) \Big] \Big\}\label{F2}\\
&\, +\dots \notag\\
&\, + 2\mu^2\Big\{  \sum_{i_2=1}^{\frac{m_1}{m_2}} \dots \sum_{i_{k-1}=1}^{\frac{m_{k-2}}{m_{k-1}}}\Big[\sum_{i_{k}=1}^{\frac{m_{k-1}}{m_k}}\cov \bigl(1-\sY_{1,i_2,\dots, i_{k-1}}, 1-\sY_{1,i_2,\dots,i_{k}}\bigr)\Big]\Big\}.\label{Fk}
\end{align}
The first line \eqref{F0} follows by adding the variances of each of the sums in \eqref{Tdepk}, and using that terms belonging to the same sum are independent, hence that are no covariance terms arising from each of the individual sums. We then compute the covariances between the different sums, and we take advantage of the fact that if $l<n$, then $1-\sY_{1,i_2, \dots, i_l}$ and $1-\sY_{1,j_2\dots j_n}$ are independent unless $i_\ell=j_\ell$ for all $1\le \ell \le l$, and in this case
\begin{align}
\label{covs}
\cov \bigl(1-\sY_{1,i_2\dots, i_l},1-\sY_{1,i_2\dots, i_n}\bigr)=q^{m_{l+1}}\bigl(1-q^{m_{n+1}}\bigr),
\end{align}
by a computation similar to \eqref{i322}. Recall that $1-\sY_{1,i_2,\dots, i_j} \sim$ Bernoulli$(1-q^{m_{j+1}})$, hence
\begin{align}
\label{varians}
\var(1-\sY_{1,i_2,\dots, i_j})=q^{m_{j+1}}\bigl(1-q^{m_{j+1}}\bigr).
\end{align}
Substituting \eqref{covs} and \eqref{varians} in the expression for the variance above, we have
\begin{align}
\var\bigl(T_{k}\bigr)
=&\mu^2 \Big\{ q^{m_1}\bigl(1-q^{m_1}\bigr)+\mu q^{m_2}\bigl(1-q^{m_2}\bigr)+\dots+ \mu^{k-1}q^{m_k}\bigl(1-q^{m_k}\bigr)\Big\}  \notag\\
&\quad+2\mu^2\Big\{ \mu q^{m_1} \bigl(1-q^{m_2}\bigr)+\mu^2 q^{m_1}\bigl(1-q^{m_3}\bigr)\dots+\mu^{k-1} q^{m_1}\bigl(1-q^{m_k}\bigr)\bigr)\Big\}\notag\\
&\quad +2\mu^2\Big\{ \mu^2 q^{m_2}\bigl(1-q^{m_3}\bigr)+\dots+\mu^{k-1} q^{m_2}\bigl(1-q^{m_k}\bigr) \Big\}\label{Fk+1}\\
&\quad +\dots \notag\\
&\quad + 2\mu^2\Big\{ \mu^{k-1} q^{m_{k-1}}\bigl(1-q^{m_k}\bigr)\Big\}.\notag
\end{align}
If we rewrite \eqref{Fk+1} by collecting all terms that have a factor $(1-q^{m_i}),\,1\le i\le k$, we get the expression in \eqref{vark1}.
\end{proof}

\subsection{Error in the linear approximation}
\label{aB}
\begin{proof}[Proof of Lemma  \ref{error-bound}]
We have
\begin{align}
\label{errors}
  D_k(\um,p)-\oD_k(\um,p) &=\Bigl(1-e^{m_k\log q}-m_k p\Bigr)+ \sum_{j=2}^k \frac1{m_j} \Bigl(1-e^{m_{j-1}\log q} -m_{j-1}p\Bigr).
\end{align}
To show that the error is non positive it suffices to prove that 
\begin{align*}
  f(p) = 1-e^{m_j\log q} -m_jp \,\le 0,\qquad \ 0\le p<1
\end{align*}
for any given $1\le j\le k$.
We have $f(0) =0$ and 
\begin{align*}
  f'(p) &= \frac{m_j}{q} e^{m_j \log q}-m_j\\
        &= m_j \Bigl[\frac{e^{m_j\log q}}{q}-1\Bigr] = m_j \Bigl[\frac{ q^{m_{j}}}{q}-1\Bigr]\\
  &= m_j \Bigl[ q^{m_{j}-1}-1\Bigr] \le 0.
\end{align*}
because $m_j\ge 1$. This implies that $f$ is decreasing and $f(p)<0$ for all $p$, and item {\it i)} in the lemma follows.

To prove item {\it ii)}, note that by the inequality $|1-e^x+x|\le \frac{x^2}{2}$ on $x\le 0$, we have
\begin{align*}
\left|\frac{1-e^{m_{j-1}\log q}}{m_j}+\frac{m_{j-1}}{m_j}\log q \right| \le \frac12 \frac{m^2_{j-1}\log^2 q}{m_j}.
\end{align*}
Denote $\um=(m_1,\dots,m_k)$ and recall the notation $m_{k+1}:=1$. Then
\begin{align}
\label{F1}
\Big|D_k(\um,p)-\Bigl(\,\frac{1}{m_1}-m_k\log q-\sum_{j=2}^k \frac{m_{j-1}}{m_j} \log q\Bigr)\Big|&\le \frac12\sum_{j=2}^{k+1} \frac{m^2_{j-1}\log^2 q}{m_j}\nn \\
&\le \frac12\, \ell\, \log^2 q \sum_{j=2}^{k+1} \frac{m_1}{2^{j-2}}\quad \text{ using $m_j\le \frac{m_1}{2^{j-1}}$}\nn\\
&\le \ell m_1 \log^2 q. 
\end{align}
On the other hand
\begin{align}
\label{F2}
\Big|L_k(\um,p)-\Bigl(\,\frac{1}{m_1}-m_k\log q-\sum_{j=2}^k \frac{m_{j-1}}{m_j} \log q\Bigr)\Big|&=\bigr|p+\log q\bigl| \,\sum_{j=2}^{k+1}\frac{m_{j-1}}{m_j}\nn\\
&\le \,k\ell p^2,
\end{align}
where the last line follows from the inequality $|x+\log(1-x)|\le x^2$, $0\le x\le\frac12$. The result follows from \eqref{F1} and \eqref{F2}.

\end{proof}

\subsection{Positive definite Hessian matrix}
\label{hessian}
We prove here that the critical point \eqref{gpools} is indeed a minimum of $\oD_k$, for given $k$.
The Hessian matrix of $\oD_k$ is a tridiagonal symmetric matrix $H_k$ with entries 
\[
\begin{array}
{ll}
\displaystyle{ H_{11}=\frac{2}{m_1^3},\qquad H_{ii}= \frac{2pm_{i-1}}{m_i^3},}\qquad\qquad &2\le i\le k,\\
\displaystyle{H_{i\, i+1}=H_{i+1\, i}=-\frac{p}{m_{i+1}^2},}&1\le i\le k-1,\\
\text{and}\quad \displaystyle{ H_{ij}=0} &\text{if }|i-j|>1.
\end{array}
\]
Let us denote by $H^\sharp=H\big((m_1^\sharp(k),\dots,m_k^\sharp(k)\big)$. To simplify notation, let $\mu:=p^{-\frac{1}{k+1}}$, so that $m_j^\sharp(k)=\mu^{k-j+1}$.
We have 
\[
\begin{array}
{ll}
\displaystyle{ H_{ii}=2\mu p^3\mu^{2i},}\qquad\qquad\quad &2\le i\le k,\\
\displaystyle{H_{i\, i+1}=H_{i+1\, i}}=\displaystyle{- p^3\mu^{2(i+1)},}\qquad\qquad\qquad &1\le i\le k-1,\\
\text{and}\quad \displaystyle{ H_{ij}=0} &\text{if }|i-j|>1.
\end{array}
\]
Given $\mathbf{x}=(x_1,\dots,x_k) \in \R^k$, let us define $\mathbf{y}:=\big(\mu x_1,\mu^2 x_2,\dots,\mu^k x_k\big)$. We compute
\begin{align*}
\mathbf{x}^{\text{t}}H^\sharp\mathbf{x}&=p^3\Big[\sum_{i=1}^k 2\mu \mu^{2i} x_i^2 -2\sum_{i=1}^{k-1} \mu^{2(i+1)} x_i x_{i+1}\Big]\\
&=p^3\big[\sum_{i=1}^k 2\mu y_i^2 -2\sum_{i=1}^{k-1} \mu y_i y_{i+1}\Big]\\
&=\mu p^3\Big[y_1^2+(y_1-y_2)^2+(y_2-y_3)^2+\dots+(y_{k-1}-y_k)^2+y_k^2\Big]\ge 0,
\end{align*}
and $\mathbf{x}^{\text{t}}H^\sharp\mathbf{x}=0$ if and only if $y_1=y_2=\dots y_k=0$, or, in terms of the original vector, $\mathbf{x}=0$. We conclude that $H^\sharp$ is positive definite.

\section*{Acknowledgments} 
We would like to thank Pablo Aguilar, Alejandro Colaneri, Hugo Menzella, Juliana Sesma and Sergio Chialina for bringing this problem to our attention and encouraging us to study it. We thank Leandro Mart\'{\i}nez and Walter Mascarenhas for their help with the accurate computation of $\Phi(p)$ when $p = 2^{-51}$ in Conjecture \ref{p14}. 
P.A.F. would like to thank Luiz-Rafael Santos for comments and reference suggestions.

We thank the referees and the associated editor for their comments and suggestions. 

This work was partially supported by UBA (UBACyT 20020170100482BA, 20020160100155BA) and ANPCyT (PICT
2015-3824, 2015-3583, 2018-02026, 2018-02842).

\bibliographystyle{siam}


\end{document}